\numberwithin{equation}{section}
\newtheorem{Thm}{Theorem}[section]
\newtheorem{Prop}[Thm]{Proposition}
\newtheorem{Lem}[Thm]{Lemma}
\def\bC {\mathbf{C}}
\def\bR {\mathbf{R}}
\def\bS {\mathbf{S}}
\def\bp {\mathbf{p}}
\def\bP {\mathbf{P}}
\def\fH {\mathfrak{H}}
\def\fR {\mathfrak{R}}
\def\cA {\mathcal{A}}
\def\cB {\mathcal{B}}
\def\cC {\mathcal{C}}
\def\cF {\mathcal{F}}
\def\cG {\mathcal{G}}
\def\cH {\mathcal{H}}
\def\cI {\mathcal{I}}
\def\cK {\mathcal{K}}
\def\cL {\mathcal{L}}
\def\cM {\mathcal{M}}
\def\cQ {\mathcal{Q}}
\def\cR {\mathcal{R}}
\def\cS {\mathcal{S}}
\def\cT {\mathcal{T}}
\def\cX {\mathcal{X}}
\def\a {{\alpha}}
\def\b {{\beta}}
\def\g {{\gamma}}
\def\de {{\delta}}
\def\eps {{\varepsilon}}
\def\th {{\theta}}
\def\ka{{\kappa}}
\def\l {{\lambda}}
\def\om {{\omega}}
\def\rstr {{\big |}}
\def\indc {{\bf 1}}
\def\la {\langle}
\def\ra {\rangle}
\def\d {{\partial}}
\newcommand{\Dom}{\operatorname{Dom}}
\newcommand{\supess}{\mathop{\mathrm{ess\,sup}}}
\newcommand{\Ker}{\operatorname{Ker}}
\newcommand{\IM}{\operatorname{Im}}
\newcommand{\be}{\begin{equation}}
\newcommand{\ee}{\end{equation}}
\newcommand{\bmat}{\begin{matrix}}
\newcommand{\emat}{\end{matrix}}
\newcommand{\ba}{\begin{aligned}}
\newcommand{\ea}{\end{aligned}}
\newcommand{\lb}{\label}
\begin{document}

\title[Boundary Layer with Phase Transition in Kinetic Theory]{On the Boundary Layer Equations\\ with Phase Transition\\ in the Kinetic Theory of Gases}

\author[N. Bernhoff]{Niclas Bernhoff}
\address[N.B.]{Department of Mathematics and Computer Science, Karlstad University, 651 88 Karlstad, Sweden}
\email{niclas.bernhoff@kau.se}

\author[F. Golse]{Fran\c cois Golse}
\address[F.G.]{CMLS, \'Ecole polytechnique, 91128 Palaiseau Cedex, France}
\email{francois.golse@polytechnique.edu}

\begin{abstract}
Consider the steady Boltzmann equation with slab symmetry for a monatomic, hard sphere gas in a half space. At the boundary of the half space, it is assumed that the gas is in contact with its condensed phase. The present paper 
discusses the existence and uniqueness of a uniformly decaying boundary layer type solution of the Boltzmann equation in this situation, in the vicinity of the Maxwellian equilibrium with zero bulk velocity, with the same temperature as that
of the condensed phase, and whose pressure is the saturating vapor pressure at the temperature of the interface. This problem has been extensively studied first by Y. Sone, K. Aoki and their collaborators, by means of careful numerical
simulations. See section 2 of  [C. Bardos, F. Golse, Y. Sone: J. Stat. Phys.  \textbf{124} (2006), 275--300] for a very detailed presentation of these works. More recently T.-P. Liu and S.-H. Yu [Arch. Rational Mech. Anal. \textbf{209} (2013), 
869--997] have proposed an extensive mathematical strategy to handle the problems studied numerically by Y. Sone, K. Aoki and their group. The present paper offers an alternative, possibly simpler proof of one of the results discussed in 
[T.P. Liu, S.-H. Yu, loc. cit.]
\end{abstract}

\date{\today}

\subjclass{82C40 76P05 35Q20 (34K18,82B26)}

\keywords{Boltzmann equation, Half-space problem, Knudsen layer, Evaporation and condensation, Generalized eigenvalue problem}
\maketitle


\section{Introduction and Notations}


The half-space problem for the steady Boltzmann equation is to find solutions $F\equiv F(x,v)$ to the Boltzmann equation in the half-space with slab symmetry --- meaning that $F$ depends on one space variable only, henceforth denoted by $x>0$, 
and on three velocity variables $v=(v_1,v_2,v_3)$ --- converging to some Maxwellian equilibrium as $x\to+\infty$. Physically, $F(x,v)$ represents the velocity distribution function of the molecules of a monatomic gas located at the distance $x$ of some given 
plane surface, with velocity $v\in\bR^3$.

Assuming for instance that $v_1$ is the coordinate of the velocity $v$ in the $x$ direction, this half-space problem is put in the form
\be
\lb{HalfSp-F}
\left\{
\ba
{}&v_1\d_xF(x,v)=\cB(F,F)(x,v)\,,\quad v\in\bR^3\,,\,\,x>0\,,
\\
&F(x,v)\to\cM_{1,u,1}(v)\,\,\hbox{ as }x\to+\infty\,.
\ea
\right.
\ee
The Boltzmann collision integral is defined as
$$
\cB(F,F)(x,v):=\iint_{\bR^3\times\bS^2}(F(x,v')F(x,v'_*)-F(x,v)F(x,v_*))|(v-v_*)\cdot\om|d\om dv_*\,,
$$
where $v'$ and $v'_*$ are given in terms of $v$, $v_*$ and $\om$ by the formulas
$$
\ba
v':=&v-((v-v_*)\cdot\om)\om\,,
\\
v'_*:=&v_*+((v-v_*)\cdot\om)\om\,.
\ea
$$
For the moment, we assume that $F$ is, say, continuous in $x$ and rapidly decaying in $v$ as $|v|\to+\infty$, so that the collision integral --- and all its variants considered below --- make sense.

The quadratic collision integral above is polarized so as to define a symmetric bilinear operator as follows:
$$
\cB(F,G):=\tfrac12(\cB(F+G,F+G)-\cB(F,F)-\cB(G,G))\,.
$$
An important property of the Boltzmann collision integral is that it satisfies the the conservation of mass, momentum and energy, i.e. the identities
\be\lb{CollInvB}
\int_{\bR^3}\left(\begin{matrix} 1 \\ v_1\\ v_2\\ v_3\\ |v|^2\end{matrix}\right)\cB(F,G)(v)dv=0
\ee
for all rapidly decaying, continuous functions  $F,G$ defined on $\bR^3$ --- see \S 3.1 in \cite{CerIllPul}.

The notation for Maxwellian equilibrium densities is as follows:
$$
\cM_{\rho,u,\th}(v):=\frac{\rho}{(2\pi\th)^{3/2}}e^{-\frac{(v_1-u)^2+v_2^2+v_3^2}{2\th}}\,.
$$
In the sequel, a special role is played by the centered, reduced Gaussian density $\cM_{1,0,1}$, henceforth abbreviated as
$$
M:=\cM_{1,0,1}\,.
$$

We recall that the Boltzmann collision integral vanishes identically on Maxwellian distributions --- see \S 3.2 in \cite{CerIllPul}):
$$
\cB(\cM_{\rho,u,\th},\cM_{\rho,u,\th})=0\qquad\text{ for all }\rho,\th>0\hbox{ and }u\in\bR\,.
$$
With the substitution 
\be\lb{Def-xi}
\xi=v-(u,0,0)\,,
\ee
on account of the identity $\cB(M,M)=0$, the problem (\ref{HalfSp-F}) is put in the form
\be
\lb{HalfSp-f}
\left\{
\ba
{}&(\xi_1+u)\d_xf(x,\xi)+\cL f(x,\xi)=\cQ(f,f)(x,\xi)\,,\quad\xi\in\bR^3\,,\,\,x>0\,,
\\
&f(x,\xi)\to 0\,\,\hbox{ as }x\to+\infty\,,
\ea
\right.
\ee
where $f$ is defined by the identity
$$
F(x,v)=M(1+f)(x,v-(u,0,0))\,,
$$
while
\be\lb{DefLQ}
\cL f:=-2M^{-1}\cB(M,Mf)\,,\quad\cQ(f,f)=M^{-1}\cB(Mf,Mf)\,.
\ee

As a consequence of (\ref{CollInvB}) 
$$
\int_{\bR^3}\left(\begin{matrix} 1\\ \xi_1\\ \xi_2\\ \xi_3\\ |\xi|^2\end{matrix}\right)\cL f(\xi)Md\xi
=
\int_{\bR^3}\left(\begin{matrix} 1 \\ \xi_1\\ \xi_2\\ \xi_3\\ |\xi|^2\end{matrix}\right)\cQ(f,f)(\xi)Md\xi=0
$$
for all rapidly decaying, continuous functions  $f$ defined on $\bR^3$.

Now, for each $\cR\in O_3(\bR)$ (the group of orthogonal matrices with $3$ rows and columns), one has 
$$
\cB(F\circ\cR,G\circ\cR)=\cB(F,G)\circ\cR
$$
so that
\be\lb{InvO3L}
\cL(f\circ\cR)=(\cL f)\circ\cR\,,\quad\cQ(f\circ\cR,f\circ\cR)=\cQ(f,f)\circ\cR\,,
\ee
for all continuous on $\bR^3$, rapidly decaying functions $F,G,f$. (See \S 2.2.3 in \cite{BouGolPul} for a quick proof of these invariance results.)

Assume that the problem (\ref{HalfSp-f}) with boundary condition
\be
\lb{BCond-f}
f(0,\xi)=f_b(\xi)\,,\qquad\xi_1+u>0
\ee
has a unique solution $f$ in some class of functions that is invariant under the action of $O_3(\bR)$ on the velocity variable $\xi$ (such as, for instance, $L^\infty(\bR_+\times\bR^3;Md\xi dx)$. If 
$$
f_b(\xi_1,\xi_2,\xi_3)=f_b(\xi_1,-\xi_2,-\xi_3)\quad\text{ for all }\xi_2,\xi_3\in\bR\text{ and all }\xi_1>-u\,,
$$
then $f\circ\cR$ is also a solution of (\ref{HalfSp-f})-(\ref{BCond-f}), where
\be\lb{DefR}
\cR:=\left(\begin{matrix}1 &0 &0\\ 0 &-1 &0\\ 0 &0 &-1\end{matrix}\right)
\ee
so that, by uniqueness, $f\circ\cR=f$. Henceforth, we restrict our attention to solutions of (\ref{HalfSp-f}) that are even in $(\xi_2,\xi_3)$, and define
$$
\fH:=\{\phi\in L^2(Mdv)\,|\,\phi\circ\cR=\phi\}\,,\qquad\text{ where }\cR\text{ is defined in \eqref{DefR}.}
$$

We recall that $\cL$ is an unbounded, nonnegative self-adjoint Fredholm operator on $L^2(\bR^3;Md\xi )$ with domain
$$
\Dom(\cL):=\{\phi\in L^2(\bR^3;Md\xi )\,|\,\nu\phi\in L^2(\bR^3;Md\xi )\}\,,
$$
(see Theorem 7.2.1 in \cite{CerIllPul}), where $\nu$ is the collision frequency defined as
$$
\nu(|\xi|):=\iint_{\bR^3\times\bS^2}|(\xi-\xi_*)\cdot\om|M_*d\xi_*d\om\,.
$$
The collision frequency satisfies the inequalities
\be\lb{CollFreqIneq}
\nu_-(1+|\xi|)\le\nu(|\xi|)\le\nu_+(1+|\xi|)\quad\text{ for all }\xi\in\bR^3\,,
\ee
where $\nu_+>1>\nu_->0$ designate appropriate constants --- see formula (2.13) in \cite{CerIllPul}. More specifically, the linearized collision operator $\cL$ is of the form 
\be\lb{nu-K}
\cL\phi(\xi)=\nu(|\xi|)\phi(\xi)-\cK\phi(\xi)\,,\qquad\phi\in\Dom\cL\,,
\ee
where $\cK$ is an integral operator, whose properties are summarized in the proposition below.

\begin{Prop}\lb{P-PropK}
The linear integral operator $\cK$ is compact on $L^2(\bR^3;Md\xi)$ and satisfies the identity $\cK(\phi\circ\cR)=(\cK\phi)\circ\cR$, where $\cR$ is defined in \eqref{DefR}. With the notation
$$
L^{\infty,s}(\bR^3):=\{\phi\in L^\infty(\bR^3)\,|\,(1+|\xi|)^s\phi\in L^\infty(\bR^3)\}\,,
$$
the linear operator 
$$
M^{1/2}\cK M^{-1/2}:\,\phi\mapsto \sqrt{M}\cK(\phi/\sqrt{M})
$$ 
is bounded from $L^2(\bR^3;dv)$ to $L^{\infty,1/2}(\bR^3)$, and, for each $s\ge 0$, from $L^{\infty,s}(\bR^3)$ to $L^{\infty,s+1}(\bR^3)$.
\end{Prop}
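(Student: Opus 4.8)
The plan is to go back to the explicit kernel representation of $\cK$ coming from the Grad decomposition of the hard-sphere collision operator. Recall that $\cL\phi=\nu\phi-\cK\phi$, where $\cK$ splits as $\cK=\cK_2-\cK_1$ with the gain part $\cK_2$ and the loss part $\cK_1$ given by classical Grad computations (see \S 7.2 in \cite{CerIllPul}): conjugating by $\sqrt M$, the operator $\widetilde\cK:=M^{1/2}\cK M^{-1/2}$ acts as $\widetilde\cK\psi(\xi)=\int_{\bR^3}k(\xi,\xi_*)\psi(\xi_*)\,d\xi_*$, where $k=k_2-k_1$ and, for hard spheres, one has the Grad estimates
\be\lb{GradKernel}
0\le k_1(\xi,\xi_*)\le C\,|\xi-\xi_*|\,e^{-\frac14|\xi-\xi_*|^2}\,,\qquad
0\le k_2(\xi,\xi_*)\le\frac{C}{|\xi-\xi_*|}\,e^{-\frac18\left(|\xi-\xi_*|^2+\frac{(|\xi|^2-|\xi_*|^2)^2}{|\xi-\xi_*|^2}\right)}\,.
\ee
All three assertions of the proposition will be read off from \eqref{GradKernel}.

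First, the invariance $\cK(\phi\circ\cR)=(\cK\phi)\circ\cR$: this is immediate from \eqref{InvO3L}, since $\cK=\nu I-\cL$ and both $\nu$ (a function of $|\xi|$ alone) and $\cL$ commute with the action of $O_3(\bR)$, of which $\cR$ is an element; equivalently, the kernel $k$ is invariant under the simultaneous action $k(\cR\xi,\cR\xi_*)=k(\xi,\xi_*)$, which is visible in \eqref{GradKernel}. Second, compactness on $L^2(\bR^3;Md\xi)$: since $\psi\mapsto M^{1/2}\psi$ is an isometry from $L^2(Md\xi)$ onto $L^2(d\xi)$, it suffices to show $\widetilde\cK$ is compact on $L^2(d\xi)$. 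Using \eqref{GradKernel} one checks $\sup_\xi\int|k(\xi,\xi_*)|\,d\xi_*<\infty$ and $\sup_{\xi_*}\int|k(\xi,\xi_*)|\,d\xi<\infty$, so $\widetilde\cK$ is bounded on $L^2$; then one truncates, replacing $k$ by $k^{(R)}:=k\,\indc_{|\xi|\le R,\,|\xi_*|\le R,\,|\xi-\xi_*|\ge 1/R}$, which is in $L^2(\bR^3\times\bR^3)$ hence Hilbert–Schmidt, and estimates $\|\widetilde\cK-\widetilde\cK^{(R)}\|_{L^2\to L^2}\to0$ as $R\to\infty$ by the Schur test applied to the tail of the kernel (the singularity of $k_2$ is only $|\xi-\xi_*|^{-1}$ in dimension $3$, hence integrable, and the Gaussian factors control the region at infinity). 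This realizes $\widetilde\cK$ as a norm limit of Hilbert–Schmidt operators, whence compactness.

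Third, the mapping properties. For the bound $L^2(d\xi)\to L^{\infty,1/2}$: for $\psi\in L^2(d\xi)$, Cauchy–Schwarz gives $(1+|\xi|)^{1/2}|\widetilde\cK\psi(\xi)|\le(1+|\xi|)^{1/2}\left(\int k(\xi,\xi_*)^2\,d\xi_*\right)^{1/2}\|\psi\|_{L^2}$, so it suffices to show $(1+|\xi|)\int k(\xi,\xi_*)^2\,d\xi_*$ is bounded uniformly in $\xi$; this is a direct computation from \eqref{GradKernel}, using that the large-$|\xi|$ Gaussian decay of $k$ beats the polynomial weight (for $k_2$ one passes to coordinates adapted to $\xi-\xi_*$ and the orthogonal direction, where the exponent $\tfrac18\big(|\xi-\xi_*|^2+(|\xi|^2-|\xi_*|^2)^2/|\xi-\xi_*|^2\big)$ supplies a factor decaying like $e^{-c|\xi|^2}$ off a thin slab). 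For the scale of bounds $L^{\infty,s}\to L^{\infty,s+1}$, $s\ge0$: write $(1+|\xi|)^{s+1}|\widetilde\cK\psi(\xi)|\le(1+|\xi|)^{s+1}\int k(\xi,\xi_*)(1+|\xi_*|)^{-s}\,d\xi_*\cdot\|\psi\|_{L^{\infty,s}}$, and reduce to proving
\be\lb{WeightedL1}
\sup_{\xi\in\bR^3}(1+|\xi|)^{s+1}\int_{\bR^3}\frac{k(\xi,\xi_*)}{(1+|\xi_*|)^{s}}\,d\xi_*<\infty\,.
\ee
Splitting $|\xi_*|\le\tfrac12|\xi|$ and $|\xi_*|>\tfrac12|\xi|$: on the first region the Gaussian factor in $|\xi-\xi_*|$ (for $k_1$) or in $(|\xi|^2-|\xi_*|^2)/|\xi-\xi_*|$ (for $k_2$) is $\le e^{-c|\xi|^2}$, which absorbs all polynomial weights; on the second region $(1+|\xi|)^{s+1}(1+|\xi_*|)^{-s}\le 2^s(1+|\xi|)$, and one is left with $\sup_\xi(1+|\xi|)\int k(\xi,\xi_*)\,d\xi_*$, finite by the same computation used in the previous part. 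This proves \eqref{WeightedL1} and completes the argument.

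The routine-but-delicate point — the one I expect to absorb most of the work — is the region where $\xi$ and $\xi_*$ are both large but nearly equal, so that neither Gaussian factor in \eqref{GradKernel} is small; there one must use genuinely the $|\xi-\xi_*|^{-1}$ integrable singularity of $k_2$ together with the change of variables splitting $\xi_*-\xi$ into its component along $\xi$ and its orthogonal complement, checking that the resulting integral is $O((1+|\xi|)^{-1})$ uniformly. Everything else is bookkeeping with Gaussians. These kernel estimates are classical (Grad, Caflisch, Golse–Poupaud), so I would cite \cite{CerIllPul} for \eqref{GradKernel} and carry out only the weighted integrations above.
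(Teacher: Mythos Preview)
Your proposal is correct and follows the classical Grad route, which is exactly what the paper invokes: the paper does not give its own proof of this proposition but simply refers to Theorem 7.2.4 in \cite{CerIllPul}, attributing compactness to Hilbert (1912) and the weighted $L^\infty$ bounds to Grad (1962), and then records in the subsequent lemma the three-term decomposition $\cK=\cK_1+\cK_2-\cK_3$ (your two-term gain/loss split is the same content after Grad's change of variables). One small slip: your bound on the loss kernel should read $k_1(\xi,\xi_*)\le C|\xi-\xi_*|e^{-\frac14(|\xi|^2+|\xi_*|^2)}$ (or, if you prefer a difference-only exponent, $e^{-\frac18|\xi-\xi_*|^2}$ after dropping the $|\xi+\xi_*|^2$ factor), not $e^{-\frac14|\xi-\xi_*|^2}$; this does not affect any of your subsequent estimates.
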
 

These results are stated as Theorem 7.2.4 in \cite{CerIllPul}, to which we refer for a proof. That $\cK$ is compact in $L^2(\bR^3;Md\xi)$ was proved by Hilbert in 1912; that the twisted operator $M^{1/2}\cK M^{-1/2}$ is bounded from 
$L^2(\bR^3;d\xi)$ to $L^\infty(\bR^3)$ and from $L^\infty_s(\bR^3)$ to $L^\infty_{s+1}(\bR^3)$ was proved by Grad in 1962.

In fact Proposition \ref{P-PropK} is a consequence of the following lemma, which will be needed later.

\begin{Lem}\lb{L-PropK}
The linear integral operator $\cK$ can be decomposed as 
$$
\cK=\cK_1+\cK_2-\cK_3\,,
$$
where
$$
\ba
\cK_1\phi(\xi):=\iint_{\bR^3\times\bS^2}\phi(\xi')M(\xi_*)|(\xi-\xi_*)\cdot\om|d\xi_*d\om\,,
\\
\cK_2\phi(\xi):=\iint_{\bR^3\times\bS^2}\phi(\xi'_*)M(\xi_*)|(\xi-\xi_*)\cdot\om|d\xi_*d\om\,,
\\
\cK_3\phi(\xi):=\iint_{\bR^3\times\bS^2}\phi(\xi_*)M(\xi_*)|(\xi-\xi_*)\cdot\om|d\xi_*d\om\,.
\ea
$$
For $j=1,2,3$, the operator $\cK_j$ is compact on $L^2(\bR^3;Md\xi)$ and satisfies the identity 
$$
\cK_j(\phi\circ\cR)=(\cK_j\phi)\circ\cR
$$
where $\cR$ is defined in \eqref{DefR}. Moreover the linear operators 
$$
M^{1/2}\cK_jM^{-1/2}:\,\phi\mapsto \sqrt{M}\cK_j(\phi/\sqrt{M})
$$ 
are bounded from $L^2(\bR^3;d\xi)$ to $L^\infty_{1/2}(\bR^3)$, and, for each $s\ge 0$, from $L^\infty_s(\bR^3)$ to $L^\infty_{s+1}(\bR^3)$ for $j=1,2,3$.
\end{Lem}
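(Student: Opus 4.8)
The plan is to establish the three asserted properties --- compactness on $L^2(\bR^3;Md\xi)$, $\cR$-equivariance, and the regularizing bounds for the twisted operators $M^{1/2}\cK_jM^{-1/2}$ --- by first rewriting each $\cK_j$ as an explicit integral operator with a kernel, and then analyzing these kernels. For $\cK_3$ this is immediate: after integrating in $\om\in\bS^2$ one gets a convolution-type operator $\cK_3\phi(\xi)=\int_{\bR^3}k_3(\xi,\xi_*)\phi(\xi_*)\,M(\xi_*)d\xi_*$ with $k_3(\xi,\xi_*)=\pi|\xi-\xi_*|$ (the $\om$-integral of $|(\xi-\xi_*)\cdot\om|$ over the sphere). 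For $\cK_1$ and $\cK_2$ the standard move --- due to Hilbert and Grad, and carried out in detail in \S 7.2 of \cite{CerIllPul} --- is to use the collision geometry to change variables from $(\xi_*,\om)$ to the post-collisional velocity ($\xi'$ for $\cK_1$, $\xi'_*$ for $\cK_2$), which turns each $\cK_j$ into an integral operator $\cK_j\phi(\xi)=\int_{\bR^3}\tilde k_j(\xi,y)\phi(y)\,dy$ whose kernel, after absorbing the appropriate Gaussian weights, has the classical Grad form
\be\lb{GradKernel}
|\tilde k_j(\xi,y)M(\xi)^{-1/2}M(y)^{1/2}|\le\frac{C}{|\xi-y|}\exp\left(-c\left(|\xi-y|^2+\frac{(|\xi|^2-|y|^2)^2}{|\xi-y|^2}\right)\right)
\ee
for suitable constants $C,c>0$, with an analogous (in fact simpler) bound for $j=3$.

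Granting \eqref{GradKernel}, the three claims follow by routine estimates on the symmetrized kernel $\kappa_j(\xi,y):=\tilde k_j(\xi,y)M(\xi)^{-1/2}M(y)^{1/2}$. First, $\cR$-equivariance: each kernel $\tilde k_j(\xi,y)$ depends on $\xi,y$ only through $|\xi|$, $|y|$ and $\xi\cdot y$ (this is visible from the rotational invariance \eqref{InvO3L} of the collision operator, or directly from the explicit kernels), hence is invariant under $(\xi,y)\mapsto(\cR\xi,\cR y)$, which is exactly the identity $\cK_j(\phi\circ\cR)=(\cK_j\phi)\circ\cR$. Second, the $L^\infty$ mapping properties: one shows $\sup_\xi(1+|\xi|)\int_{\bR^3}\frac{|\kappa_j(\xi,y)|}{(1+|y|)^s}\,dy<\infty$ for each $s\ge0$, by splitting the $y$-integral into the region $|y-\xi|<1$ (where the singularity $|\xi-y|^{-1}$ is integrable in $\bR^3$ and the Gaussian factor is harmless) and $|y-\xi|\ge1$ (where one uses the exponential decay in $|\xi-y|$ together with the factor $\exp(-c(|\xi|^2-|y|^2)^2/|\xi-y|^2)$ to gain the extra power of $(1+|\xi|)$); this yields boundedness $L^\infty_s\to L^\infty_{s+1}$, and the case $s=0$ combined with the Cauchy--Schwarz inequality $\left(\int|\kappa_j(\xi,y)|\,|\psi(y)|dy\right)^2\le\left(\int|\kappa_j(\xi,y)|dy\right)\left(\int|\kappa_j(\xi,y)|\,\psi(y)^2dy\right)$ and a symmetric bound on $\int|\kappa_j(\xi,y)|d\xi$ gives boundedness $L^2(d\xi)\to L^\infty_{1/2}$ after one more use of the $|\xi|$-decay. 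Third, $L^2(Md\xi)$-compactness: the map $\phi\mapsto M^{1/2}\cK_j(\phi/M^{1/2})$ is unitarily equivalent to $\cK_j$ on $L^2(Md\xi)$, and it is an integral operator on $L^2(\bR^3;d\xi)$ with kernel $\kappa_j$; one checks $\kappa_j\in L^2(\bR^3\times\bR^3)$ directly from \eqref{GradKernel} (again splitting near and far from the diagonal), so $\cK_j$ is Hilbert--Schmidt, hence compact. (Alternatively, approximate $\kappa_j$ by truncating near the diagonal and at infinity to get a norm-limit of Hilbert--Schmidt operators.)

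The main obstacle is establishing the kernel representation and the Grad bound \eqref{GradKernel}, in particular carrying out the change of variables $(\xi_*,\om)\mapsto\xi'$ (resp. $\xi'_*$) correctly --- tracking the Jacobian, the range of integration, and the way the Gaussian weight $M(\xi_*)$ recombines with the delta-like constraint coming from the collision relations --- and then extracting the precise exponent $(|\xi|^2-|y|^2)^2/|\xi-y|^2$, which is what ultimately produces the gain of one power of $(1+|\xi|)$. Since this is exactly the classical computation of Grad reproduced in Theorem 7.2.4 and its proof in \cite{CerIllPul}, I would not redo it in full: I would state the kernel formulas for $\cK_1,\cK_2,\cK_3$, cite \cite{CerIllPul} for \eqref{GradKernel}, and then give in detail only the three consequences (equivariance, $L^\infty$-bounds, Hilbert--Schmidt), since it is the decomposition $\cK=\cK_1+\cK_2-\cK_3$ with these uniform properties for each piece --- rather than just for $\cK$ itself --- that is the new ingredient needed later in the paper.
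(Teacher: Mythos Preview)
Your proposal is essentially correct and matches the paper's treatment: the paper does not give its own proof of this lemma but relies on the classical Hilbert--Grad analysis reproduced in \S7.2 of \cite{CerIllPul}, exactly as you propose to do. Your outline --- derive the explicit kernels via the Carleman-type change of variables, invoke the Grad pointwise bound, and read off equivariance, the $L^\infty_s\to L^\infty_{s+1}$ gain, and compactness --- is the standard route.

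One small gap to flag: your claim that $\kappa_j\in L^2(\bR^3\times\bR^3)$ follows directly from the bound \eqref{GradKernel} is not quite right for $j=1,2$. If you pass to variables $z=\xi-y$, $w=\xi+y$, the exponent $-c|z|^2-c(z\cdot w)^2/|z|^2$ controls only the component of $w$ parallel to $z$, so the integral over the two perpendicular components of $w$ diverges and the Grad bound alone does not place $\kappa_j$ in $L^2(\bR^3\times\bR^3)$. Compactness is instead obtained by the alternative you already mention parenthetically: approximate $\kappa_j$ by truncated kernels (cutting off the $|\xi-y|^{-1}$ singularity and restricting to a compact set) and show the remainder is small in operator norm on $L^2$, using that $\sup_\xi\int|\kappa_j(\xi,y)|dy$ and $\sup_y\int|\kappa_j(\xi,y)|d\xi$ are finite (Schur test). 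That is the argument in \cite{CerIllPul}; promote it from a parenthetical to the main line and drop the Hilbert--Schmidt assertion.
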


Henceforth, we denote 
$$
\la\phi\ra:=\int_{\bR^3}\phi(\xi)M(\xi)d\xi\,.
$$

Furthermore, 
$$
\Ker\cL=\hbox{Span}\{X_+,X_0,X_-,\xi_2,\xi_3\}
$$
(see Theorem 7.2.1 in \cite{CerIllPul}) where 
$$
X_\pm:=\tfrac1{\sqrt{30}}(|\xi|^2\pm\sqrt{15}\xi_1)\,,\qquad X_0:=\tfrac1{\sqrt{10}}(|\xi|^2-5)\,.
$$
The family $(X_+,X_0,X_-,\xi_2,\xi_3)$ is orthonormal in $L^2(\bR^3;Md\xi)$, and orthogonal for the bilinear form $(f,g)\mapsto\la\xi_1fg\ra$ --- see
\cite{CorGolSul} , with
$$
\la\xi_1X_\pm^2\ra=\pm c\,,\quad\la\xi_1X_0^2\ra=\la\xi_1\xi_2^2\ra=\la\xi_1\xi_3^2\ra=0\,.
$$
Here $c$ is the speed of sound associated to the Maxwellian distribution $M$, i.e.
$$
c:=\sqrt{\tfrac53}\,.
$$

In view of (\ref{InvO3L}), the unbounded operator $\cL$ on $L^2(Mdv)$ induces an unbounded, self-adjoint Fredholm operator on $\fH$ still denoted $\cL$, with domain $\fH\cap\Dom\cL$ and nullspace $\fH\cap\Ker\cL=\hbox{Span}\{X_+,X_0,X_-\}$.


\section{Main Result}


Y. Sone and his collaborators have arrived at the following result by formal asymptotics or numerical experiments \cite{SoneOni, Sone78, SoneAokiYamashi, SoneSugi, AokiSoneYama, AokiNishiSoneSugi, Sone00}. Consider the steady Boltzmann 
equation in (\ref{HalfSp-F}) with boundary conditions
\be
\lb{BCond-F}
\ba
F(0,v)=\cM_{\rho_w,0,T_w}(v)\hbox{ for all }v_1>0\,,\quad F(x,v)\to\cM_{\rho_\infty,u,T_\infty}\hbox{ as }x\to+\infty\,.
\ea
\ee
This boundary condition is relevant in the context of a phase transition in the kinetic theory of gases. In this case, the plane of equation $x=0$ represents the interface separating the liquid phase (confined in the domain $x<0$) from the gaseous 
phase (in the domain $x>0$). The parameter $T_w$ is the temperature at the interface, and $\rho_w$ is the density such that $p_w:=\rho_wT_w$ is the saturation vapor pressure for the gas at the temperature $T_w$, while $T_\infty$ and 
$p_\infty=\rho_\infty T_\infty$ are respectively the temperature and pressure far away from the interface, and  $u$ is the transverse bulk velocity in the gas far away from the interface.

Near $u=0$, the set of parameters $T_\infty/T_w$, $p_\infty/p_w$, and $u$ for which this problem has a solution is as represented in Figure \ref{F-FigSone}. It is a surface for $u<0$ and a curve for $u>0$. The solution $F$ converges exponentially 
fast as $x\to+\infty$; however, the exponential speed of convergence is \textit{not uniform on the surface $S$ as $u\to 0^-$, except on the extension of the curve $C$} on the surface $S$. See section 2 of \cite{BardosFGSone2006}, or chapter 7 of 
\cite{SoneBook2} for a comprehensive review of these numerical results. The role of slowly varying solutions --- i.e. solutions whose exponential decay as $x\to+\infty$ is not unifom as $u\to 0^+$ --- in this problem is explained in detail on pp. 280--282
in \cite{BardosFGSone2006}. The original papers by Y. Sone and his group on this problem can be found in the bibliography of \cite{SoneBook1, BardosFGSone2006, SoneBook2}. Other parts of the set of parameters $T_\infty/T_w$, $p_\infty/p_w$ 
and $u$ for which the half-space problem has a solution than the neighborhood of $(1,1,0)$ represented above have been analyzed in detail in \cite{SoneFGOhwaDoi, Boby, SoneTakaFG}.

In the limit case $u=0$, the only solution is the constant $F=\cM_{1,0,1}$ corresponding with the single point $(1/T_w,-u/c,1/p_w)=(1,0,1)$ on the figure  --- see \cite{BardosFGSone2006} section 5 for a proof.

\begin{figure}\lb{F-FigSone}

\includegraphics[width=12cm]{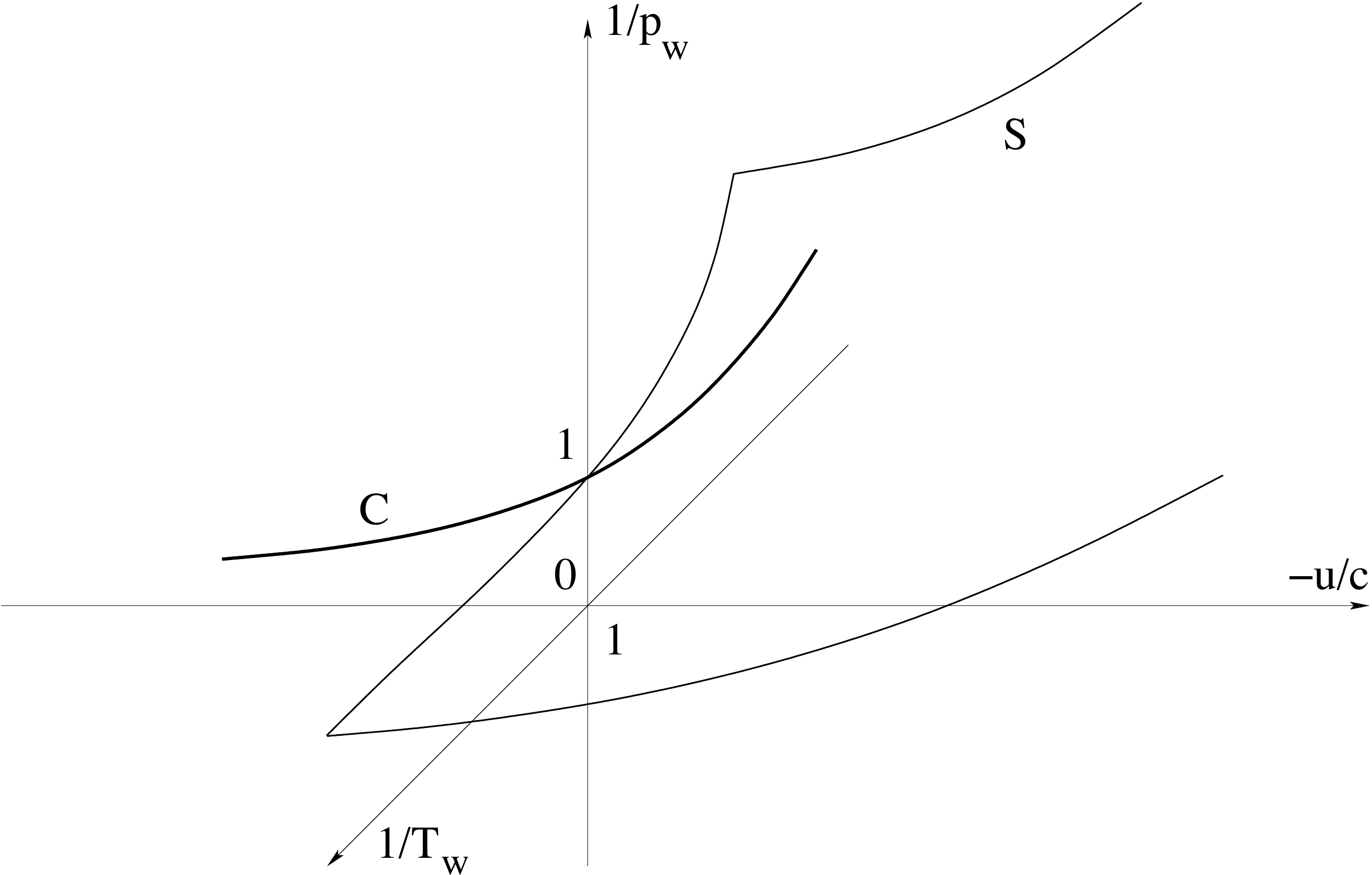}

\caption{The curve $C$ and the surface $S$ in the space of parameters $-u/c$, $p_\infty/p_w$ and $T_\infty/T_w$ near the transition from evaporation
to condensation.}

\end{figure}

We propose a strategy for establishing rigorously the existence of the curve $C$ corresponding with solutions of (\ref{HalfSp-F})-(\ref{BCond-F}) in some neighborhood of the point $(1,0,1)$ converging as $x\to+\infty$ with exponential speed 
\textit{uniformly in $u$}. 

Consider the nonlinear half-space problem for the Boltzmann equation written in terms of the relative fluctuation of distribution function about the normalized Maxwellian $M$
\be\lb{NLHalfSpPbm}
\left\{
\ba
{}&(\xi_1+u)\d_xf_u+\cL f_u=\cQ(f_u,f_u)\,,\quad \xi\in\bR^3\,,\,\,x>0\,,
\\
&f_u(0,\xi)=f_b(\xi)\,,\qquad\xi_1+u>0\,.
\ea
\right.
\ee

\begin{Thm}\lb{T-Main}
There exist $\eps>0$, $E>0$, $R>0$ and $\Gamma>0$ --- defined in \eqref{DefEps}, \eqref{DefE}, \eqref{Cond1} and \eqref{DefGa} respectively --- such that, for each boundary data $f_b\equiv f_b(\xi)$ satisfying
$$
f_b\circ\cR=f_b\quad\text{ and }\,\,\|(1+|\xi|)^3\sqrt{M}f_b\|_{L^\infty(\bR^3)}\le\eps\,,
$$
(with $\cR$ defined in \eqref{DefR}), and for each $u$ satisfying $0<|u|\le R$, the problem \eqref{NLHalfSpPbm} has a unique solution $f_u$ satisfying the symmetry
$$
f_u(x,\cR\xi)=f_u(x,\xi)\quad\text{ for a.e. }\,\,(x,\xi)\in\bR_+\times\bR^3\,,
$$
and the uniform decay estimate 
\be\lb{ExpDecUnif}
\supess_{\xi\in\bR^3}(1+|\xi|)^3\sqrt{M(\xi)}|f_u(x,\xi)|\le Ee^{-\g x}\,,\qquad x>0
\ee
for all $\g$ such that $0<\g<\min(\Gamma,\tfrac12\nu_-)$ if and only if the boundary data $f_b$ satisfies the two additional conditions
\be\lb{2CompCond}
\la(\xi_1+u)Y_1[u]\fR_{u,\g}[f_b]\ra=\la(\xi_1+u)Y_2[u]\fR_{u,\g}[f_b]\ra=0\,.
\ee
The functions $Y_1[u]\equiv Y_1[u](\xi)$ and $Y_2[u]\equiv Y_2[u](\xi)$ are defined in Lemma \ref{L-PenCond}, while the (nonlinear) operator $\fR_{u,\g}$ is defined in \eqref{DefRug}.
\end{Thm}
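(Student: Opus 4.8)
The plan is to linearize the half-space problem around $M$, solve the linear problem with a prescribed exponential decay rate via a penalization (damping) argument, and then close the nonlinear problem by a fixed-point iteration in a weighted $L^\infty$ space. The two compatibility conditions in \eqref{2CompCond} arise precisely because, after penalization, the generalized eigenvalue problem associated with the operator $\xi_1+u$ acting against $\cL$ acquires, for $u\ne 0$ small, exactly two ``bad'' modes — the continuation of the acoustic/entropy modes in $\Ker\cL$ that are \emph{not} automatically damped — which must be projected out for a solution decaying at rate $\g$ to exist. Concretely, one introduces for $\g>0$ the damped linear operator $f\mapsto(\xi_1+u)\d_xf+\cL f+\g(\xi_1+u)^-f$ (or a variant selecting the incoming half-space), so that the zero mode of $\cL$ is turned into a genuinely decaying mode except along a finite-dimensional subspace; the functions $Y_1[u],Y_2[u]$ of Lemma \ref{L-PenCond} span the corresponding ``adjoint'' obstruction space, and $\fR_{u,\g}$ of \eqref{DefRug} is the resolvent-type operator sending the boundary data $f_b$ to the relevant trace data whose projection onto that subspace must vanish.

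**Key steps, in order.** First, I would set up the weighted spaces: work with $g:=(1+|\xi|)^3\sqrt{M}f$ so that Proposition \ref{P-PropK} and Lemma \ref{L-PropK} give boundedness of the twisted operator $M^{1/2}\cK M^{-1/2}$ gaining one power of $(1+|\xi|)$, and the collision frequency bound \eqref{CollFreqIneq} gives coercivity of the multiplication by $\nu$. Second, solve the \emph{penalized linear} half-space problem $(\xi_1+u)\d_xf+\cL f+\text{(damping)}=S$ with incoming data $f_b$: by the standard Nishida–Imai/Bardos–Caflisch–Nicolaenko scheme — decompose into $\xi_1+u>0$ and $\xi_1+u<0$, integrate along characteristics, and treat $\cK$ as a compact perturbation — one obtains a solution in the weighted $L^\infty$ space with decay $e^{-\g x}$, \emph{provided} the data lies in the codimension-$d$ subspace killed by the finitely many obstruction functionals; here the point is that after the $O_3(\bR)$ symmetry reduction to $\fH$, $\dim(\fH\cap\Ker\cL)=3$ (spanned by $X_\pm,X_0$), and the damping removes one of these (the genuinely stable direction), leaving exactly $d=2$. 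Third, identify those two functionals explicitly as $\phi\mapsto\la(\xi_1+u)Y_j[u]\phi\ra$, $j=1,2$, using the fact that $(X_+,X_0,X_-)$ is orthogonal for $\la\xi_1\,\cdot\,\cdot\ra$ with $\la\xi_1X_\pm^2\ra=\pm c\ne0$ and $\la\xi_1X_0^2\ra=0$, and perturb in $u$ (analytic/implicit function theorem on the finite-dimensional generalized eigenspace). Fourth, run the nonlinear iteration: define $\fR_{u,\g}[f_b]$ as the output of the linear solver applied to data $f_b$ and source $\cQ(f,f)$, verify that $\cQ$ is bounded and Lipschitz on balls of the weighted space (again via Lemma \ref{L-PropK}-type estimates, since $\cQ(f,f)=M^{-1}\cB(Mf,Mf)$ has the same structure), and apply the contraction mapping theorem for $\eps$ small and $|u|\le R$; the compatibility conditions \eqref{2CompCond} are then imposed on $f_b$ through the nonlinear map $\fR_{u,\g}$, which is where the ``if and only if'' comes from — necessity because any decaying solution forces the projections to vanish, sufficiency because then the linear solver produces a genuine decaying solution at each iteration step and the fixed point inherits the decay. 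Uniqueness within the symmetry class and the stated decay class follows from the same contraction estimate.

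**Main obstacle.** The delicate point is the uniformity in $u$ as $u\to0$: this is the whole reason the curve $C$ (and not the surface $S$) is what gets captured. The damping coefficient and the threshold $\Gamma$ in \eqref{DefGa} must be chosen so that the spectral gap of the penalized operator — i.e.\ the distance from $0$ to the rest of the generalized spectrum of the pencil $\l\mapsto\cL+\l(\xi_1+u)$ — stays bounded below \emph{uniformly} in $u$ on $0<|u|\le R$, even though at $u=0$ the mode $X_0$ has zero ``speed'' $\la\xi_1X_0^2\ra=0$ and the fluid-dynamic eigenvalues collide. One must track carefully how the two-dimensional obstruction space and the functions $Y_1[u],Y_2[u]$ depend on $u$ near $0$ (including possible degeneration of the pencil), and show the constants $E,\eps,R,\Gamma$ can be frozen; this, together with making $\fR_{u,\g}$ continuous in $u$ up to the sign change of $\xi_1+u$ across $\xi_1=-u$, is where the real work lies. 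The rest — the linear half-space theory and the nonlinear contraction — is by now fairly standard once the correct weighted functional framework is in place.
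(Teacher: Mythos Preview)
Your overall architecture (penalize, solve the linear half-space problem, close by contraction) matches the paper's, and you correctly flag uniformity in $u$ as the crux. But the mechanism you propose for that uniformity is not the one that works, and this is a genuine gap.

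The penalization you suggest, $f\mapsto(\xi_1+u)\d_xf+\cL f+\g(\xi_1+u)^-f$, is essentially the Ukai--Yang--Yu scheme, and the paper explicitly explains (beginning of Section~4) why that is \emph{not} sufficient here: the resulting estimates on the linear solution operator degenerate as $u\to 0$, so the contraction radius $R_u$ shrinks faster than the size of the admissible boundary data. The missing ingredient is the Nicolaenko--Thurber generalized eigenfunction: one first solves $\cL\phi_u=\tau_u(\xi_1+u)\phi_u$ with $\tau_u\sim u\dot\tau_0$ and $\phi_u\to X_0$ as $u\to 0$ (Proposition~\ref{P-GEP}). This $\phi_u$ is precisely the slowly varying mode responsible for the non-uniformity; the paper removes it by a Lyapunov--Schmidt splitting $f=g-h\phi_u$ with $g=(I-\bp_u)f$, where $\bp_u g=-\la(\xi_1+u)\psi_u g\ra\phi_u$ and $\psi_u=(\phi_u-\phi_0)/u$. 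The penalized operator is then $\cL^p g=\cL g+\a\Pi_+((\xi_1+u)g)+\b\bp_u g-\g(\xi_1+u)g$, and the presence of the $\bp_u$ term is what makes the coercivity bound $\la g\cL^p g\ra\ge c\la\nu g^2\ra$ hold \emph{uniformly} in $|u|\le R$ (Proposition~\ref{P-CoercL}).

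Correspondingly, your account of where the two conditions come from is off. They do not arise simply by counting signs of $\la\xi_1X_\pm^2\ra$ after ``damping removes one direction.'' In the paper one tracks the three moments $\la(\xi_1+u)X_+g_\g\ra$, $\la(\xi_1+u)X_0g_\g\ra$, $\la(\xi_1+u)\psi_u g_\g\ra$ of the penalized solution; they satisfy a linear $3\times 3$ ODE with matrix $\cA_u-\g I$ (Lemma~\ref{L-PenCond}), and $\cA_u$ has two positive and one negative eigenvalue uniformly near $u=0$. The left eigenvectors for the two positive eigenvalues give $Y_1[u],Y_2[u]$, and the conditions \eqref{2CompCond} are exactly what kill the two exponentially growing components. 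Without the $\psi_u$ moment --- i.e.\ without having isolated $\phi_u$ --- you cannot set up this $3\times 3$ system in a way that behaves well as $u\to 0$.
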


\bigskip
Several remarks are in order before starting with the proof of Theorem \ref{T-Main}.

\smallskip
First observe that Sone's original problem falls in the range of application of Theorem \ref{T-Main}. Indeed, the boundary condition \eqref{BCond-F} translates into
\be\lb{BCond-f}
f_b(\xi)=\frac{\cM_{\rho_w,-u,T_w}-M}{M}\,,
\ee
which is obviously even in $(\xi_2,\xi_3)$. Since
$$
M^{-1}d\cM_{1,0,1}=\rho_w-1-u\xi_1+(T_w-1)\tfrac12(|\xi|^2-3)\,,
$$
one has
$$
|\rho_w-1|+|u|+|T_w-1|\ll 1\implies\|(1+|\xi|)^3\sqrt{M}f_b\|_{L^\infty(\bR^3)}\ll 1\,.
$$

The two conditions \eqref{2CompCond} are expected to define a ``submanifold of codimension $2$'' in the set of boundary data $f_b$. When specialized to the three dimensional submanifold of Sone's data \eqref{BCond-f}, this ``submanifold 
of codimension $2$'' is expected to be the curve described by the equations
$$
p_\infty/p_w=h_1(u/\sqrt{5/3})\,,\qquad T_\infty/T_w=h_1(u/\sqrt{5/3})\,,
$$
referred to as equations (2.3) in \cite{BardosFGSone2006}, and defining the set of parameters for which a solution of the half-space problem exists in the evaporation case. As explained above, this curves is expected to extend smoothly in the
condensation region if slowly decaying solutions are discarded. Unfortunately, we have not been able to check that the two equations above, even when restricted to the $3$ dimensional manifold of Sone's boundary data \eqref{BCond-f} are 
smooth (at least $C^1$) and locally independent (by the implicit function theorem). We obviously expect this to be true, but this seems to involve some rather delicate properties of half-space problems for the linearized Boltzmann equation.

An a priori estimate to be found in section 5 of \cite{BardosFGSone2006} shows that the only solution of \eqref{NLHalfSpPbm}-\eqref{BCond-f} with $u=0$ is $f_u\equiv 0$, so that $\rho_w=T_w=1$. One can differentiate formally about this point
both sides of the Boltzmann equation at $u=0$ along the curve $u\mapsto(\rho_w(u),T_w(u))$ defined for $-\sqrt{5/3}<u<0$ by the equations (2.3) of \cite{BardosFGSone2006} recalled above. Denoting $\dot f_0(x,\xi):=(\d f_u/\d u)(x,\xi)|_{u=0}$,
one finds that $\dot f_0$ should satisfy
$$
\left\{
\ba
{}&\xi_1\d_x\dot f_0+\cL\dot f_0=0\,,&&\quad \xi\in\bR^3\,,\,\,x>0\,,
\\
&\dot f_0(0,\xi)=\rho_w'(0^-)+T_w'(0^-)\tfrac12(|\xi|^2-3)-\xi_1\,,&&\quad\xi_1+u>0\,,
\\
&\dot f_0(x,\xi)\to 0&&\quad\text{ as }x\to+\infty\,,
\ea
\right.
$$
where $\rho_w'(0^-)$ and $T_w(0^-)$ are the left derivatives of $\rho_w$ and $T_w$ at $u=0$ along the evaporation curve. The Bardos-Caflisch-Nicolaenko \cite{BardCaflNico} theory of the half-space problem for the linearized Boltzmann equation 
implies that there exists a unique pair of real numbers $(\rho_w'(0^-),T_w'(0^-))$ for which a solution $\dot f_0$ exists. This is obviously a very interesting piece of information as it provides a tangent vector at the origin to the ``curve'' defined by the 
two conditions \eqref{2CompCond} of Theorem \ref{T-Main} specialized to boundary data of the form \eqref{BCond-f}. Unfortunately, whether $f_u$ is differentiable in $u$ at $u\not=0$ is rather unclear, and we shall not discuss this issue any further.

Our strategy for proving Theorem \ref{T-Main} is as follows: first we isolate the slowly varying mode near $\rho_w=T_w=1$ and $u=0$ on the condensation side. This leads to a generalized eigenvalue problem of the kind considered by B. Nicolaenko
in \cite{Nico74,Nico76} (see also \cite{NicoThurber, CafNico82}) in his construction of a weak shock profile for the nonlinear Boltzmann equation. Next we remove this slowly varying mode from the linearization of \eqref{NLHalfSpPbm} by a combination
of the Lyapunov-Schmidt procedure used in \cite{Nico74, Nico76, CafNico82} to establish the existence of the shock profile as a bifurcation from the constant sonic Maxwellian, and of the penalization method of \cite{UYY} for studying weakly nonlinear 
half-space problems. Theorem \ref{T-Main} is obtained by a simple fixed point argument about the solution of some conveniently selected linear problem, in whose definition both the Lyapunov-Schmidt method of \cite{Nico74} and the penalization
method of \cite{UYY} play a key role. In some sense, the paper \cite{Golse2008} can be regarded as a precursor to this one; it extends the very clever penalization method of \cite{UYY} to the case $u=0$, but does not consider the transition from $u<0$ 
(evaporation) to $u>0$ (condensation).  We also refer the interested reader to the beginning of section 4, where we explain one (subtle) difference between the results obtained on weakly nonlinear half-space problems for the Boltzmann equation in 
\cite{UYY} and the problem analyzed in the present work.

The outline of the paper is as follows: section 3 provides a self-contained construction of the solution to the Nicolaenko-Thurber generalized eigenvalue problem near $u=0$. Section 4 introduces the penalization method, and formulates the problem
to be solved by a fixed point argument. Section 5 treats the linearized penalized problem, while section 6 treats the (weakly) nonlinear penalized problem by a fixed point argument. Theorem \ref{T-Main} is obtained by removing the penalization.
The main ideas used in the proof of Theorem \ref{T-Main} are to be found in sections 3-4; by contrast, sections 5 and 6 are mostly of a technical nature.

\smallskip
Before starting with the proof of Theorem \ref{T-Main}, we should say that Theorem \ref{T-Main} above is not completely new or original, in the following sense. A general study of the Sone half-space problem with condensation and evaporation
for the Boltzmann equation has been recently proposed by T.-P. Liu and S.-H. Yu in a remarkable paper \cite{LiuYu}. Our Theorem \ref{T-Main} corresponds to cases 2 and 4 in Theorem 28 on p. 984 of \cite{LiuYu}. Given the considerable range of 
cases considered in \cite{LiuYu}, the proof of Theorem 28 is just sketched. The analysis in \cite{LiuYu} appeals to a rather formidable technical apparatus, especially to the definition and structure of the Green function for the linearized Boltzmann
equation (see section 2.2 of \cite{LiuYu}, referring to an earlier detailed study of these functions, cited as ref. 21 in \cite{LiuYu}). Our goal in Theorem \ref{T-Main} is much more modest: to provide a completely self-contained proof for one key item
in the Sone diagram, namely the evaporation and its extension to the condensation regime obtained by discarding slowly decaying solutions. We also achieve much less: for instance we do not know whether the solution $M(1+f_u)$ of the steady 
Boltzmann equation obtained in Theorem \ref{T-Main} satisfies $M(1+f_u)\ge 0$. This is known to be a shortcoming of the method of constructing solutions to the steady Boltzmann equation by some kind of fixed point argument about a uniform 
Maxwellian. At variance, all the results in \cite{LiuYu} are based on an invariant manifold approach based on the large time behavior of the Green function for the linearized Boltzmann equation. (Incidentally, the numerical results obtained by Sone 
and his collaborators were also based on time-marching algorithms in the long time limit.) Since the Boltzmann equation propagates the positivity of its initial data, one way of constructing nonnegative steady solutions of the Boltzmann equation is
to obtain them as the long time limit of some conveniently chosen time-dependent solutions. For this reason alone, the strategy adopted in \cite{LiuYu} has in principle more potential than ours. On the other hand, our proof uses only elementary
techniques, and we hope that the present paper could serve as an introduction to the remarkable series of works by Sone and his collaborators quoted above, and to the deep mathematical analysis in \cite{LiuYu}.


\section{The Nicolaenko-Thurber Generalized Eigenvalue Problem}


The generalized eigenvalue problem considered here is to find $\tau_u\in\bR$ and a generalized eigenfunction $\phi_u\in\fH\cap\Dom\cL$ satisfying
\be
\lb{GEP}
\left\{
\ba
{}&\cL\phi_u=\tau_u(\xi_1+u)\phi_u\,,
\\
&\la(\xi_1+u)\phi_u^2\ra=-u\,,
\ea
\right.
\ee
for each $u\in\bR$ near $0$. 

This problem was considered by Nicolaenko and Thurber in \cite{NicoThurber} for $u$ near $c$ --- see Corollary 3.10 in \cite{NicoThurber}\footnote{The possibility of extending Corollary 3.10 of \cite{NicoThurber} to the case where $|u|\ll 1$ was 
mentioned to the second author by B. Nicolaenko in 1999.}. It is the key to the construction of a weak shock profile for the Boltzmann equation \cite{Nico74,Nico76}. (An approximate variant of (\ref{GEP}) is considered in \cite{CafNico82} for molecular 
interactions softer than hard spheres.)

\begin{Prop}\lb{P-GEP}
There exists $r>0$, a real-analytic function 
$$
(-r,r)\ni u\mapsto\tau_u\in\bR\,,
$$
and a real-analytic map 
$$
(-r,r)\ni u\mapsto\phi_u\in\fH\cap\Dom\cL
$$
that is a solution to (\ref{GEP}) for each $u\in(-r,r)$ and satisfies 
$$
u\tau_u<0\quad\text{ for }0<|u|<r\,.
$$
In other words, 
$$
\tau_0=0\quad\text{ and }\tau_u=u\dot\tau_0+O(u^2)\text{ as }u\to 0\,,
$$
with 
$$
\dot\tau_0<0\,.
$$
Furthermore, there exists a positive constant $C_s$ for each $s\ge 0$ such that $\phi_u$ satisfies 
$$
\|(1+|\xi|)^s\sqrt{M}\phi_u\|_{L^\infty}\le C_s
$$
for all $s\ge 0$, uniformly in $u\in(-r,r)$.
\end{Prop}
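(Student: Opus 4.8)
The plan is to treat \eqref{GEP} as a bifurcation problem from the trivial solution $(\tau,\phi)=(0,\phi_0)$ at $u=0$, where $\phi_0\in\Ker\cL\cap\fH=\hbox{Span}\{X_+,X_0,X_-\}$ must be chosen correctly, and then to apply the analytic implicit function theorem. First I would record that at $u=0$ the first equation reads $\cL\phi_0=0$, so $\phi_0=aX_++bX_0+dX_-$ for some constants; the normalization $\la\xi_1\phi_0^2\ra=0$ together with $\la\xi_1X_\pm^2\ra=\pm c$, $\la\xi_1X_0^2\ra=0$, and the orthogonality of the family for the form $(f,g)\mapsto\la\xi_1fg\ra$ forces $c a^2-c d^2=0$, i.e. $a=\pm d$. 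The correct branch (the one giving the slowly varying condensation mode, hence $\dot\tau_0<0$) will be singled out below by the solvability condition at first order; I expect it to be $\phi_0$ proportional to $X_++X_-$ (proportional to $|\xi|^2$ up to lower order), with the $X_0$ component and the overall scale to be fixed.

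Next I would set up the Lyapunov–Schmidt decomposition. Write $\phi_u=\phi_0+\psi_u$ with $\psi_u\in(\Ker\cL)^\perp\cap\fH$ plus a one–parameter freedom along the branch, and project the equation $\cL\phi_u=\tau_u(\xi_1+u)\phi_u$ onto $\Ker\cL$ and its orthogonal complement. On $(\Ker\cL)^\perp$ the operator $\cL$ is invertible with bounded inverse (Fredholm, self-adjoint, nonnegative, nullspace exactly $\hbox{Span}\{X_+,X_0,X_-\}$ on $\fH$), so the orthogonal component gives $\psi_u=\tau_u\,\cL^{-1}P^\perp\big((\xi_1+u)(\phi_0+\psi_u)\big)$, which for $(\tau_u,u)$ near $0$ is solved by the contraction mapping principle and depends real-analytically on $(\tau_u,u)$; moreover $\psi_u=O(\tau_u)$. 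Substituting back, the three bifurcation equations (projections onto $X_+,X_0,X_-$) take the form $\tau_u\,G_j(\tau_u,u)=0$ for $j\in\{+,0,-\}$, where $G_j(\tau_u,u)=\la X_j(\xi_1+u)(\phi_0+\psi_u)\ra$. Since we want $\tau_u\neq0$ for $u\neq0$, we divide by $\tau_u$ and must solve $G_j(\tau_u,u)=0$. At $(\tau,u)=(0,0)$ one has $G_j(0,0)=\la X_j\xi_1\phi_0\ra$, which vanishes for all $j$ precisely because $\phi_0\in\Ker\cL$ and $\xi_1\phi_0$ is then orthogonal to $\Ker\cL$ — wait, that is the compatibility that must be arranged; it holds here since $\la X_j\xi_1\phi_0\ra$ with $\phi_0=a(X_++X_-)+bX_0$ gives $\la X_+\xi_1\phi_0\ra=ac$, $\la X_-\xi_1\phi_0\ra=-ac$, $\la X_0\xi_1\phi_0\ra=0$, so in fact we need $a$ or the combination re-examined. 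This is the delicate point: I would instead keep $\phi_0$ with its two free parameters (ratio $b/a$ and scale) and use the three equations $G_j=0$ to determine, simultaneously, $\tau_u$ as a function of $u$ and the two parameters of $\phi_0$; the normalization $\la(\xi_1+u)\phi_u^2\ra=-u$ then fixes the scale and, crucially, the sign, yielding $u\tau_u<0$.

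The main obstacle is precisely this solvability/sign analysis at first order in $u$: one must compute $\partial_u G_j$ and the Jacobian of $(G_+,G_0,G_-)$ with respect to $(\tau,\text{parameters of }\phi_0)$ at the origin, show it is invertible, apply the analytic implicit function theorem to get real-analytic $u\mapsto\tau_u$ and $u\mapsto\phi_u$, and then extract $\dot\tau_0<0$ from the normalization $\la(\xi_1+u)\phi_u^2\ra=-u$ by differentiating at $u=0$: at leading order $\la\xi_1\phi_0^2\ra+2\tau_0\la\xi_1\phi_0\cL^{-1}(\ldots)\ra+\dot\tau_0(\ldots)+\la\phi_0^2\ra\,u=-u+\ldots$, so matching the $O(u)$ terms gives $\dot\tau_0$ in terms of $\la\phi_0^2\ra>0$ and the acoustic coefficients $\la\xi_1X_\pm^2\ra=\pm c$; the sign works out negative on the correct branch. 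Finally, the uniform bounds $\|(1+|\xi|)^s\sqrt M\phi_u\|_{L^\infty}\le C_s$ follow by a bootstrap on the equation written as $\nu\phi_u=\cK\phi_u+\tau_u(\xi_1+u)\phi_u$, i.e. $\phi_u=\nu^{-1}\cK\phi_u+\tau_u\nu^{-1}(\xi_1+u)\phi_u$: since $\phi_u$ is bounded in $\fH\cap\Dom\cL$ uniformly in $u\in(-r,r)$ (possibly after shrinking $r$), Lemma \ref{L-PropK} and Proposition \ref{P-PropK} give that $M^{1/2}\cK M^{-1/2}$ maps $L^2(d\xi)\to L^{\infty,1/2}$ and $L^{\infty,s}\to L^{\infty,s+1}$, while multiplication by $\nu^{-1}(\xi_1+u)$ is bounded on each $L^{\infty,s}$ with the gain from $\nu^{-1}$ controlling the loss from $\xi_1+u$ (using \eqref{CollFreqIneq} and $|u|<r$), so iterating finitely many times upgrades the bound from $s=0$ to any $s\ge0$, with constants $C_s$ uniform in $u$.
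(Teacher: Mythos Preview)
Your Lyapunov--Schmidt approach is different from the paper's and, as written, has a genuine gap at precisely the point you flag as delicate. The computation you record --- $\la X_+\xi_1\phi_0\ra=ac$, $\la X_-\xi_1\phi_0\ra=-ac$ --- is in fact the first-order solvability condition, and it forces $a=d=0$, i.e.\ $\phi_0\in\bR X_0$, \emph{not} $\phi_0\propto X_++X_-$ as you expect. With the wrong $\phi_0$ the Jacobian you would need for the implicit function theorem is degenerate, and the sign argument you sketch for $\dot\tau_0<0$ cannot go through: that sign ultimately comes from $\la\xi_1 X_0\,\dot\phi_0\ra=\la\dot\phi_0\cL\dot\phi_0\ra>0$ via $\cL\dot\phi_0=\xi_1X_0$, which requires $\phi_0=X_0$. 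Your scheme could in principle be repaired once $\phi_0=X_0$ is identified, but the nondegeneracy check on the three bifurcation equations still has to be carried out, and you have not done it.

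The paper bypasses all of this by swapping the roles of $u$ and $\tau$: it considers the holomorphic self-adjoint family $T(z)=\cL-z\xi_1$ (the hard-sphere bound $\nu\sim 1+|\xi|$ is what keeps $\Dom T(z)$ constant), and invokes Kato's analytic perturbation theory to get three real-analytic eigenvalue branches $\l_\pm(z),\l_0(z)$ near the triple eigenvalue $0$ of $T(0)=\cL$, with $\dot\l_\pm(0)=\pm c$, $\dot\l_0(0)=0$, and $\phi^0_0=X_0$ singled out automatically. Differentiating the eigenvalue equation twice at $z=0$ gives $\ddot\l_0(0)=-2\la\dot\phi^0_0\cL\dot\phi^0_0\ra<0$. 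One then sets $u(z):=\l_0(z)/z$, which is real-analytic with $u(0)=0$ and $\dot u(0)=\tfrac12\ddot\l_0(0)<0$, inverts it to define $\tau_u:=z(u)$, and finally rescales $\phi^0_{z(u)}$ to meet the normalization $\la(\xi_1+u)\phi_u^2\ra=-u$ (checking along the way that this quantity is $-u+O(u^2)$). This avoids any multi-parameter bifurcation analysis and delivers $\dot\tau_0<0$ directly from the second-variation identity. Your $L^\infty$ bootstrap for the weighted bounds at the end is essentially the same as the paper's.
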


Observe that $f_u(x,\xi)=e^{-\tau_ux}\phi_u(\xi)$ is a solution of the steady linearized Boltzmann equation
$$
(\xi_1+u)\d_xf_u+\cL f_u=0\,,\qquad x\in\bR\,,\,\,\xi\in\bR\,.
$$
Since $\tau_u\simeq u\dot\tau_0$ as $u\to 0$ with $\dot\tau_0<0$, one has
$$
\|f_u(x,\cdot)\|_{\fH}=e^{|\dot\tau_0|ux}\|\phi_u\|_{\fH}\qquad\text{ as }x\to+\infty\,.
$$
In other words, $f_u$ grows exponentially fast as $x\to+\infty$ if $u>0$ (evaporation) and decays exponentially fast to $0$ as $x\to+\infty$ for $u<0$ (condensation). In the latter case, the exponential speed of convergence of $f_u$ is 
$|\dot\tau_0||u|$, which is \text{not uniform} as $u\to 0^-$. The transition from the curve $C$ to the surface $S$ when crossing the plane $u=0$ on Figure \ref{F-FigSone} --- which represents the transition from evaporation to condensation 
--- corresponds to the presence of an additional degree of freedom in the set of solutions. At the level of the linearized equation, this additional degree of freedom comes from the mode $f_u(x,\xi)$, which decays to $0$ as $x\to+\infty$, albeit
not uniformly as $u\to 0^-$, if and only if $u<0$. The extension of the curve $C$ on the surface $S$ is defined by the fact that solutions to the boundary layer equation (\ref{HalfSp-f}) decaying exponentially fast as $x\to+\infty$ uniformly in 
$u\to 0^-$ do not contain the $f_u$ mode. 

\smallskip
One can arrive at the statement of Proposition \ref{P-GEP} by adapting the arguments in \cite{NicoThurber} --- especially Theorems 3.7 and 3.9, and Corollaries 3.8 and 3.10, together with Appendices B and D there. Their discussion is based on a 
careful analysis of the zeros of a certain Fredholm determinant --- in fact, of the perturbation of the identity by a certain finite rank operator --- that can be seen as the dispersion relation for the linearized Boltzmann equation. For the sake of being 
self-contained, we give a (perhaps?) more direct, complete proof of Proposition \ref{P-GEP} below. 

\begin{proof}
Consider for each $z\in\bC$ the family of unbounded operators $T(z)=\cL-z\xi_1$ on $\fH$. In view of (\ref{CollFreqIneq}), $T(z)$ is a holomorphic family of unbounded operators with domain $\Dom T(z)=\fH\cap\Dom\cL$ whenever $|z|<\nu_-$, in 
the sense of the definition on p. 366 in \cite{Kato}. (Indeed, defining the operator $U:\,f\mapsto\frac1{1+|\xi|}f$, we see that $U$ is a one-to-one mapping of $\fH$ to $\fH\cap\Dom\cL$ and that  $z\mapsto T(z)U$ is a holomorphic map defined for all 
$z$ such that $|z|<\nu_-$ with values in the algebra of bounded operators on $\fH$.) 

The family $T(z)$ is self-adjoint on $\fH$ in the sense of the definition on p. 386 in \cite{Kato}, since $\cL$ is self-adjoint on $\fH$ and 
$$
T(\overline{z})=\cL-\overline{z}\xi_1=T(z)^*\quad\text{ whenever }|z|<\nu_-\,.
$$
Besides, $\l=0$ is an isolated 3-fold eigenvalue of $T(0)=\cL$, corresponding with the 3-dimensional nullspace $\fH\cap\Ker\cL$ (see Theorem 7.2.5 in \cite{CerIllPul}). As explained on p. 386 in \cite{Kato}, there exist $3$ real-analytic functions 
$z\mapsto\l_+(z),\l_0(z),\l_-(z)$ defined for $z$ real near $0$ and $3$ real-analytic maps $z\mapsto\phi^+_z,\phi^0_z,\phi^-_z$ defined for $z$ real near $0$ with values in $\fH\cap\Dom\cL$ such that, for each real $z$ near $0$,
$$
\l_+(z)\text{ (resp. }\l_0(z),\l_-(z)\text{ ) is an eigenvalue of }T(z)
$$
and 
$$
\ba
(\phi^+_z,\phi^0_z,\phi^-_z)\text{ is an orthonormal system of eigenfunctions of }T(z)\text{ in }\fH
\\
\text{ for the eigenvalues }\l_+(z),\l_0(z),\l_-(z)\text{ respectively}&,
\ea
$$
while 
$$
\l_\pm(0)=0\,,\,\,\l_0(0)=0\,.
$$

For $z=0$, one has
$$
T(0)\phi^\pm_0=T(0)\phi^0_0=0\,,\quad\text{ so that }\,\,\phi^\pm_0\,,\,\,\phi^0_0\in\fH\cap\Ker\cL\,.
$$
Next we differentiate twice in $z$ the identities
$$
T(z)\phi^\pm_z=\l_\pm(z)\phi^\pm_z\quad\text{ and }\quad T(z)\phi^0_z=\l_0(z)\phi^0_z\,.
$$
Denoting by $\dot{}$ the derivation with respect to $z$ and dropping the $\pm$ or $0$ indices (or exponents) for simplicity, we obtain successively
\be\lb{DerVp1}
\cL\dot\phi_z-z\xi_1\dot\phi_z-\xi_1\phi_z=\l(z)\dot\phi_z+\dot\l(z)\phi_z\,,
\ee
and
\be\lb{DerVp2}
\cL\ddot\phi_z-z\xi_1\ddot\phi_z-2\xi_1\dot\phi_z=\l(z)\ddot\phi_z+2\dot\l(z)\dot\phi_z+\ddot\l(z)\phi_z\,.
\ee
Setting $z=0$ in (\ref{DerVp1}) leads to
$$
\cL\dot\phi_0=(\xi_1+\dot\l(0))\phi_0\,.
$$
Since $\phi_0\in\fH\cap\Ker\cL$ and $(\xi_1+\dot\l(0))\phi_0\perp\fH\cap\Ker\cL$, we conclude that 
$$
\dot\l(0)\in\{+c,0,-c\}\,.
$$
(Indeed, the matrix of the quadratic form defined on $\fH\cap\Ker\cL$ by $\phi\mapsto\la(\xi_1+u)\phi^2\ra$ in the basis $\{X_+,X_0,X_-\}$ is
$$
\left(\begin{matrix} u+c &0 &0\\ 0 &u &0\\ 0 &0 &u-c\end{matrix}\right)\,;
$$
this matrix is degenerate if and only if there exists $\phi\in\fH\cap\Ker\cL\setminus\{0\}$ such that $(\xi_1+u)\phi\perp\fH\cap\Ker\cL$, and that happens only if $u=\pm c$ or $u=0$: see \cite{CorGolSul}.)

Furthermore
$$
\left\{
\ba
{}&\dot\l(0)=+c\Rightarrow\phi_0\in\bR X_+\,,
\\
&\dot\l(0)=0\,\,\,\,\Rightarrow\phi_0\in\bR X_0\,,
\\
&\dot\l(0)=-c\Rightarrow\phi_0\in\bR X_-\,,
\ea
\right.
$$
and since $(\phi^+_0,\phi^0_0,\phi^-_0)$ is an orthonormal system in $\fH$, each one of the three cases above occurs for exactly one of the branches $\l_+(z),\l_0(z),\l_-(z)$. 

Henceforth, we label these eigenvalues so that $\dot\l_\pm(0)=\pm c$ and $\dot\l_0(0)=0$ and concentrate on the branch $\l_0(z)$. In particular, up to a change in orientation, one has $\phi^0_0=X_0$ and
\be\lb{DerVp1-0}
\cL\dot\phi^0_0=\xi_1\phi^0_0\,.
\ee

This being done, setting $z=0$ in (\ref{DerVp2}), we arrive at the identity
$$
\cL\ddot\phi^0_0-2\xi_1\dot\phi^0_0=\ddot\l_0(0)\phi^0_0\,.
$$
Taking the inner product of both sides of this identity with $\phi^0_0$, we see that
$$
\la\phi^0_0\cL\ddot\phi^0_0\ra-2\la\xi_1\dot\phi^0_0\phi^0_0\ra=\ddot\l_0(0)\la(\phi^0_0)^2\ra=\ddot\l_0(0)\,.
$$
Since $\phi^0_0=X_0\in\Ker\cL$ and $\cL$ is self-adjoint
$$
\la\phi^0_0\cL\ddot\phi^0_0\ra=0\,.
$$
In view of (\ref{DerVp1-0}), one has
$$
\la\xi_1\dot\phi^0_0\phi_0^0\ra=\la\dot\phi^0_0\cL\dot\phi^0_0\ra>0\,,
$$
since $\dot\phi^0_0\notin\Ker\cL$ --- otherwise $\cL\dot\phi^0_0=\xi_1\phi^0_0=\xi_1X_0=0$ which is obviously impossible. Therefore
\be\lb{ddl(0)}
\ddot\l_0(0)=-2\la\dot\phi^0_0\cL\dot\phi^0_0\ra<0\,.
\ee

To summarize, we have obtained real-analytic maps $z\mapsto\l_0(z)$ and $z\mapsto\phi^0_z$ such that
$$
\l_0(0)=\dot\l_0(0)=0\,,\quad\ddot\l_0(0)<0\,,\quad\text{ and }\phi^0_0=X_0\,,
$$
while
\be\lb{GEPbis}
\cL\phi^0_z=z\xi_1\phi^0_z+\l_0(z)\phi^0_z
\ee
for all $z$ real near $0$. 

Set $u(z):=\l_0(z)/z$; since $\l_0(0)=\dot\l_0(0)=0$ while $\ddot\l_0(0)<0$, the function $u$ is real-analytic near $0$ and satisfies
$$
u(0)=\dot\l_0(0)=0\,,\quad\text{ and }\dot u(0)=\tfrac12\ddot\l_0(0)<0\,.
$$
By the open mapping theorem (see Rudin \cite{RudinRCA}, Theorem 10.32), $z\mapsto u(z)$ extends into a biholomorphic map between two open neighborhoods of the origin that preserves the real axis. Denoting by $u\mapsto z(u)$
its inverse, we see that $\l_0(z(u))=uz(u)$ and we recast (\ref{GEPbis}) in the form
$$
\cL\phi^0_{z(u)}=z(u)\xi_1\phi^0_{z(u)}+uz(u)\phi^0_{z(u)}\,.
$$

For $u$ real sufficiently near $0$, one has 
$$
\nu(|\xi|)-z(u)(\xi_1+u)\ge\tfrac12\nu_-(1+|\xi|)>0\quad\text{ for all }\xi\in\bR^3\,.
$$ 
Then, returning to Hilbert's decomposition (\ref{nu-K}) of the linearized operator $\cL$, we see that
$$
\phi^0_{z(u)}=\frac1{\nu-z(u)(\xi_1+u)}\cK\phi^0_{z(u)}
$$
for all $u$ near $0$. By definition, $\|\phi^0_{z(u)}\|_{\fH}=1$; since $\cK$ is a bounded operator on $\fH$, the identity above implies that\footnote{We denote by $B(X,Y)$ the space of bounded linear operators from the Banach space $X$ to the 
Banach space $Y$, and set $B(X):=B(X,X)$.} 
$$
\|(1+|\xi|)\phi^0_{z(u)}\|_{\fH}\le\tfrac2{\nu_-}\|\cK\|_{B(\fH)}\,,\quad\text{ for all  }u\text{ near }0\,.
$$

By Proposition \ref{P-PropK}, we improve this result and arrive at the bound of the form
$$
\|\sqrt{M}\phi^0_{z(u)}\|_{L^\infty_s}\le C_s
$$
for all $s\ge 0$, uniformly in $u$ near $0$.

Furthermore
$$
\ba
\la(\xi_1+u)(\phi^0_{z(u)})^2\ra&=u\la(\phi^0_0)^2\ra+2z(u)\la\xi_1\dot\phi^0_0\phi^0_0\ra+O(u^2)
\\
&=u+2z(u)\la\dot\phi^0_0\cL\dot\phi^0_0\ra+O(u^2)=u-z(u)\ddot\l_0(0)\,.
\ea
$$
Since $z(u)=2u/\ddot\l_0(0)+O(u^2)$, we conclude that $u\mapsto\la(\xi_1+u)(\phi^0_{z(u)})^2\ra$ is a real-analytic function defined near $u=0$ and satisfying
$$
\la(\xi_1+u)(\phi^0_{z(u)})^2\ra=-u+O(u^2)\,.
$$

Finally, setting $\tau_u:=z(u)$ and
$$
\phi_u:=\frac{\phi^0_{z(u)}}{\sqrt{-\la(\xi_1+u)(\phi^0_{z(u)})^2\ra/u}}\,,
$$
we arrive at the statement of Proposition \ref{P-GEP}.
\end{proof}

\smallskip
\noindent
\textbf{Remarks.}

\noindent
(1) The analogue of Proposition \ref{P-GEP} in the case where $\dot\l(0)=c$ is precisely what is discussed in Corollary 3.10 of \cite{NicoThurber}. The idea of reducing the generalized eigenvalue problem (\ref{GEP}) to a standard eigenvalue 
problem for the self-adjoint family $T(z)$, i.e. of considering $u\tau_u$ as a function of $\tau_u$ near the origin, is somewhat reminiscent of the identity (20) in \cite{NicoThurber}.

\noindent
(2) For inverse power law, cutoff potentials softer than hard spheres, one has 
$$
\nu_-(1+|\xi|)^\a\le\nu(|\xi|)\le \nu_-(1+|\xi|)^\a\quad\text{ for some }\a<1\,.
$$
In that case, the operator $T(z)=\cL-z\xi_1$ is not a holomorphic family on $\fH$, since 
$$
\Dom T(z)=\frac1{(1+|\xi|)^\a(1+|\xi_1|)^{1-\a}}\fH\quad\text{ for }z\not=0\,,
$$
while 
$$
\Dom T(0)=\frac1{(1+|\xi|)^\a}\fH\,.
$$
The argument used in the proof of Proposition \ref{P-GEP} fails for such potentials, which is the reason why Caflisch and Nicolaenko \cite{CafNico82} consider an approximate variant of the generalized eigenvalue problem instead of (\ref{GEP}).


\section{The Penalized Problem}


Our strategy for solving the nonlinear half-space problem (\ref{HalfSp-f}) near $u=0$ --- i.e. near the transition from evaporation to condensation at the interface $x=0$ --- is as follows. 

Consider the nonhomogeneous, linear half-space problem
\be
\lb{HalfSpLin}
\left\{
\ba
{}&(\xi_1+u)\d_xf(x,\xi)+\cL f(x,\xi)=Q\,,\quad\xi\in\bR^3\,,\,\,x>0\,,
\\
&f(0,\xi)=f_b(\xi)\,,\quad\xi_1+u>0\,,
\\
&f(x,\xi)\to 0\,\,\text{ as }x\to+\infty\,,
\ea
\right.
\ee
where 
\be\lb{HypQ}
\left\{
\ba
{}&Q(x,\cdot)\perp\Ker\cL\text{ for each }x>0\,,
\\
&Q(x,\xi_1,\xi_2,\xi_3)=Q(x,\xi_1,-\xi_2,-\xi_3)\,,\text{ for each }x>0\,,\,\,\xi\in\bR^3\,,
\\
&\text{and }Q(x,\xi)\to 0\text{ as }x\to+\infty\,.
\ea
\right.
\ee
All solutions $f$ to this problem considered below are assumed to be even in $(\xi_2,\xi_3)$:
\be\lb{Even-f}
f(x,\xi_1,\xi_2,\xi_3)=f(x,\xi_1,-\xi_2,-\xi_3)\,,\text{ for each }x>0\,,\,\,\xi\in\bR^3\,.
\ee

Assume for now that we can prove existence and uniqueness of a solution $f=\cF_u[f_b,Q]$ to (\ref{HalfSpLin}) provided that $f_b$ and $Q$ satisfy some compatibility conditions, which we denote symbolically as $\cC_u[f_b,Q]=0$. An obvious 
strategy is to seek the solution $f$ of (\ref{HalfSp-f}) with boundary condition (\ref{BCond-f}) as a fixed point of the map $f\mapsto\cF_u[f_b,\cQ(f,f)]$ in some neighborhood of $f=0$.

There are two main difficulties in this approach. First, the nonlinear solution $f$ should satisfy the compatibility conditions $\cC_u[f_b,\cQ(f,f)]=0$; these compatibility conditions are not explicit since they involve $\cQ(f,f)$, and yet satisfying these 
compatibility conditions is necessary in order to be able to define $\cF_u[f_b,\cQ(f,f)]$ in the first place. 

A second difficulty lies with the solution of the linearized problem (\ref{HalfSpLin}) itself. Since $\cQ$ is a quadratic operator, one can indeed expect that the nonlinear operator $f\mapsto\cF_u[f_b,\cQ(f,f)]$ will be a strict contraction in a closed ball 
centered at the origin with small enough positive radius $R_u$, say in some space of the type $M^{-1/2}L^\infty(\bR_+;L^\infty_s(\bR^3))$ for large enough $s$. In other words, solving the linearized problem \eqref{HalfSpLin} in some appropriate
setting is the key step. Once this is done, handling the nonlinearity should not involve intractable, additional difficulties.

In fact, the work of Ukai-Yang-Yu \cite{UYY} solves precisely both these difficulties. Unfortunately, their result is not enough for the purpose of studying the transition from evaporation to condensation, for the following reason. 

Indeed, one faces the following problem: the radius $R_u$ of the closed ball centered at the origin on which one can apply the fixed point theorem to the nonlinear operator $f\mapsto\cF_u[f_b,\cQ(f,f)]$ might be so small that 
$$
f_b(\xi)=\frac{\cM_{\rho,-u,T}-M}{M}\notin\overline{B(0,R_u)}\,.
$$
In other words, 
$$
\|(\cM_{\rho,-u,T}-\cM_{1,0,1})/\sqrt{\cM_{1,0,1}}\|_{L^\infty}\gtrsim |u|\|\,|\xi|\cM_{1,0,1}\|_{L^\infty}
$$
as $u\to 0$, and it might happen that $R_u<|u|\|\,|\xi|\cM_{1,0,1}\|_{L^\infty}$ for all $u\not=0$.

The main ingredient needed to understand the transition from evaporation to condensation in the context of the half-space problem (\ref{HalfSp-F}) is therefore to obtain for the operator $\cF_u$ --- and for the radius $R_u$ --- an estimate 
that is \textit{uniform in} $u$ as  $u\to 0$.  

The generalized eigenfunction $\phi_u$ is precisely the ingredient providing this uniform estimate, by a penalization algorithm described below.

\subsection{The Lyapunov-Schmidt Method}

We denote by $\Pi_+$ the $\fH$-orthogonal projection on $\bR X_+$, i.e.
\be\lb{DefPi+}
\Pi_+g=\la gX_+\ra X_+\,,
\ee
and likewise, by $\Pi$ the $\fH$-orthogonal projection on $\text{Span}\{X_+,X_0,X_-\}$, i.e.
\be\lb{DefPi}
\Pi g=\la gX_+\ra X_++\la gX_0\ra X_0+\la gX_-\ra X_-\,.
\ee

Moreover, we introduce, for all $u\in(-r,0)\cup(0,r)$ as in Proposition \ref{P-GEP}, the operators $\bp_u$ and $\bP_u$ defined by
\be\lb{Def-p}
\bp_ug=-\la(\xi_1+u)\psi_ug\ra\phi_u\,,\quad\bP_ug=-\la\psi_ug\ra(\xi_1+u)\phi_u\,,
\ee
where
\be\lb{Def-psi}
\psi_u:=\frac{\phi_u-\phi_0}{u}\,,\quad 0<|u|<r\,.
\ee
Since $u\mapsto\tau_u$ and $u\mapsto\phi_u$ are real-analytic on $(-r,r)$ with $\tau_0=0$, and since $\psi_u:=(\phi_u-\phi_0)/u$, the function $u\mapsto\psi_u$ is also real-analytic on $(-r,r)$ with values in $\Dom\cL$.

\begin{Lem}\lb{L-PtesPp}
The linear operators $\bp_u$ and $\bP_u$ are rank-$1$ projections defined on $\fH$, satisfying
$$
\bP_u((\xi_1+u)f)=(\xi_1+u)\bp_uf\,,\qquad f\in\fH\,,
$$
and
$$
\bP_u(\cL f)=\cL(\bp_uf)\,,\quad f\in\fH\cap\Dom\cL\text{ such that }(\xi_1+u)f\perp X_0\,.
$$ 
Besides
$$
(\xi_1+u)\phi_u\perp\Ker\cL\,,\quad\text{ and therefore }\IM\bP_u\subset\Ker\cL^\perp\,.
$$
\end{Lem}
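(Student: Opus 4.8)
\textbf{Proof plan for Lemma \ref{L-PtesPp}.}

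The plan is to verify each assertion by direct computation from the definitions \eqref{Def-p} and \eqref{Def-psi}, using the generalized eigenvalue equation \eqref{GEP} as the one non-formal input. First I would check that $\bp_u$ and $\bP_u$ are rank-$1$ projections. Both are visibly rank-$1$ (their ranges are $\bR\phi_u$ and $\bR(\xi_1+u)\phi_u$ respectively), so it suffices to verify $\bp_u^2=\bp_u$, i.e. that $-\la(\xi_1+u)\psi_u\phi_u\ra=1$. Expanding $\psi_u=(\phi_u-\phi_0)/u$ gives $-\la(\xi_1+u)\psi_u\phi_u\ra=-\tfrac1u\la(\xi_1+u)\phi_u^2\ra+\tfrac1u\la(\xi_1+u)\phi_0\phi_u\ra$. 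The first term equals $1$ by the normalization in \eqref{GEP}; hence the claim reduces to $\la(\xi_1+u)\phi_0\phi_u\ra=0$, which I would prove using self-adjointness of $\cL$: pairing $\cL\phi_u=\tau_u(\xi_1+u)\phi_u$ with $\phi_0=X_0\in\Ker\cL$ gives $\tau_u\la(\xi_1+u)\phi_u\phi_0\ra=\la\phi_0\cL\phi_u\ra=\la\phi_u\cL\phi_0\ra=0$, so for $\tau_u\ne0$ (i.e. $0<|u|<r$, by Proposition \ref{P-GEP}) we get $\la(\xi_1+u)\phi_0\phi_u\ra=0$ as needed. The same normalization $-\la(\xi_1+u)\psi_u\phi_u\ra=1$ immediately gives $\bP_u^2=\bP_u$ as well.

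Next the two intertwining identities. For $\bP_u((\xi_1+u)f)=(\xi_1+u)\bp_uf$: the left side is $-\la\psi_u(\xi_1+u)f\ra(\xi_1+u)\phi_u$ and the right side is $-\la(\xi_1+u)\psi_uf\ra(\xi_1+u)\phi_u$, which are literally the same expression — so this is immediate once one writes it out. For $\bP_u(\cL f)=\cL(\bp_u f)$ under the hypothesis $(\xi_1+u)f\perp X_0$: the right side is $-\la(\xi_1+u)\psi_uf\ra\cL\phi_u=-\la(\xi_1+u)\psi_uf\ra\tau_u(\xi_1+u)\phi_u$ by \eqref{GEP}. The left side is $-\la\psi_u\cL f\ra(\xi_1+u)\phi_u=-\la(\cL\psi_u)f\ra(\xi_1+u)\phi_u$ by self-adjointness. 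So it remains to show $\la(\cL\psi_u)f\ra=\tau_u\la(\xi_1+u)\psi_uf\ra$, i.e. that $\cL\psi_u-\tau_u(\xi_1+u)\psi_u$ is orthogonal to every $f$ with $(\xi_1+u)f\perp X_0$. Now $\cL\psi_u=\tfrac1u(\cL\phi_u-\cL\phi_0)=\tfrac1u\cL\phi_u=\tfrac{\tau_u}{u}(\xi_1+u)\phi_u$ since $\phi_0=X_0\in\Ker\cL$; and $\tau_u(\xi_1+u)\psi_u=\tfrac{\tau_u}{u}(\xi_1+u)(\phi_u-\phi_0)$. Subtracting, $\cL\psi_u-\tau_u(\xi_1+u)\psi_u=\tfrac{\tau_u}{u}(\xi_1+u)\phi_0=\tfrac{\tau_u}{u}(\xi_1+u)X_0$, which pairs to zero against any $f$ with $(\xi_1+u)f\perp X_0$. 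This closes that identity.

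Finally, $(\xi_1+u)\phi_u\perp\Ker\cL$. On $\fH$, $\Ker\cL=\mathrm{Span}\{X_+,X_0,X_-\}$. Orthogonality to $X_0$ is exactly $\la(\xi_1+u)\phi_u\phi_0\ra=0$, already established above from \eqref{GEP} and $\tau_u\ne0$. For orthogonality to $X_\pm$: pairing $\cL\phi_u=\tau_u(\xi_1+u)\phi_u$ with $X_\pm\in\Ker\cL$ and using self-adjointness gives $\tau_u\la(\xi_1+u)\phi_uX_\pm\ra=\la X_\pm\cL\phi_u\ra=\la\phi_u\cL X_\pm\ra=0$, hence $\la(\xi_1+u)\phi_uX_\pm\ra=0$ since $\tau_u\ne0$. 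Therefore $(\xi_1+u)\phi_u\perp\Ker\cL$, and since $\IM\bP_u=\bR(\xi_1+u)\phi_u$ we get $\IM\bP_u\subset(\Ker\cL)^\perp$, which is what the last sentence asserts (the paper writes $\Ker\cL^\perp$ for $(\Ker\cL)^\perp$). I do not anticipate a genuine obstacle here; the only point requiring care is the consistent use of $\tau_u\ne0$ for $0<|u|<r$ (from Proposition \ref{P-GEP}) to divide, and the bookkeeping that $\phi_0=X_0$ lies in $\Ker\cL$ so that every term involving $\cL\phi_0$ drops. One should also note at the outset that $\bp_u$, $\bP_u$ do map $\fH$ into $\fH$, which follows from $\phi_u,(\xi_1+u)\phi_u\in\fH$ (Proposition \ref{P-GEP} gives $\phi_u\in\fH$, and $\fH$ is stable under multiplication by $\xi_1+u$ since $\cR$ fixes $\xi_1$).
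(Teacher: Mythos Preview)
Your proof is correct and follows essentially the same approach as the paper's: direct computation from the definitions, using self-adjointness of $\cL$ and the generalized eigenvalue relation $\cL\phi_u=\tau_u(\xi_1+u)\phi_u$ together with $\tau_u\ne 0$. The only cosmetic difference is that the paper derives $(\xi_1+u)\phi_u\perp\Ker\cL$ in one stroke via $(\xi_1+u)\phi_u=\tfrac{1}{\tau_u}\cL\phi_u\in\IM\cL=(\Ker\cL)^\perp$ and then uses that to obtain $\la(\xi_1+u)\phi_0\phi_u\ra=0$ for the projection identity, whereas you check each of $X_+,X_0,X_-$ by the same pairing argument; the content is identical.
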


\begin{proof}
The first property follows from a straightforward computation. 

For each $f\in\Dom\cL$, one has
$$
\ba
\bP_u(\cL f)=-(\xi_1+u)\phi_u\la\psi_u\cL f\ra=-(\xi_1+u)\phi_u\la f\cL\psi_u\ra
\\
=-(\xi_1+u)\phi_u\frac1u\la f\cL(\phi_0+u\psi_u)\ra=-(\xi_1+u)\phi_u\frac1u\la f\cL(\phi_u)\ra
\\
=-(\xi_1+u)\phi_u\frac1u\tau_u\la (\xi_1+u)\phi_uf\ra=-\frac1u\la (\xi_1+u)(\phi_0+u\psi_u)f\ra\cL\phi_u
\\
=-\la (\xi_1+u)\psi_uf\ra\cL\phi_u=\cL(\bp_uf)&\,,
\ea
$$
where the penultimate equality follows from assuming that $\la(\xi_1+u)fX_0\ra=0$.

Since $\tau_u\not=0$ whenever $0<|u|<r$, one has
$$
(\xi_1+u)\phi_u=\frac1{\tau_u}\cL\phi_u\in\IM\cL=(\Ker\cL)^\perp
$$
and this obviously entails the last property.

Finally, we check that $\bp_u$ and $\bP_u$ are projections:
$$
\ba
\bp_u^2(f)&=\la(\xi_1+u)\psi_u\phi_u\ra\la(\xi_1+u)\psi_uf\ra\phi_u=-\la(\xi_1+u)\psi_u\phi_u\ra\bp_u(f)\,,
\\
\bP_u^2(f)&=\la(\xi_1+u)\phi_u\psi_uf\ra\la\psi_uf\ra(\xi_1+u)\phi_u=-\la(\xi_1+u)\psi_u\phi_u\ra\bP_u(f)\,,
\ea
$$
and we conclude since
$$
\la(\xi_1+u)\psi_u\phi_u\ra=\frac1u(\la(\xi_1+u)\phi_u^2\ra-\la(\xi_1+u)\phi_0\phi_u\ra)=-1\,.
$$
Indeed, 
$$
\la(\xi_1+u)\phi_u^2\ra=-u\,,\quad\text{ and }\la(\xi_1+u)\phi_u\phi_0\ra=0\,,
$$
in view of the third property in the proposition, since $\phi_0\in\Ker\cL$.
\end{proof}

The projection $\bp_u$ is a deformation of the projection $\bp$ used in \cite{Golse2008} to study the half-space problem (\ref{HalfSp-F}) in the case $u=0$. The role of $\bp_u$ and $\bP_u$ is reminiscent of the Lyapunov-Schmidt method used by 
Nicolaenko-Thurber \cite{NicoThurber} to analyze the shock profile problem for the Boltzmann equation.

The following observations explain the origin of the penalization method used in the construction of the solution to (\ref{HalfSp-f}).

\begin{Lem}\lb{L-PtesHalfSp-f}
Assume that $0<|u|<r$. Let $Q$ satisfy (\ref{HypQ}) and $f$ be a solution to (\ref{HalfSpLin}) such that (\ref{Even-f}) holds. Assume that the source term satisfies 
$$
e^{\g x}Q\in L^\infty(\bR_+;\fH)\quad\text{ for some }\g>\max(\tau_u,0)\,, 
$$
and that 
$$
e^{\g x}f\in L^\infty(\bR_+;\fH)\,.
$$
Then 

\smallskip
\noindent
(a) the function $f$ satisfies
$$
\la(\xi_1+u)fX_+\ra=\la(\xi_1+u)fX_0\ra=\la(\xi_1+u)fX_-\ra=0\,,\quad x\ge 0\,;
$$
(b) one has
$$
(\xi_1+u)\bp_uf(x,\xi)=-\int_0^\infty e^{\tau_uz}\bP_uQ(x+z,\xi)dz\,,
$$
and, whenever $-r<u<0$,
\be
\lb{CondUnifDecay}
\la(\xi_1+u)\psi_uf\ra(0)+\int_0^\infty e^{\tau_uy}\la\psi_uQ\ra(y)dy=0\,.
\ee
\end{Lem}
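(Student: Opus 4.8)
The plan is to exploit the conservation properties of $\cL$ together with the decay hypotheses, and then to apply the operator $\bP_u$ to the equation and integrate in $x$.

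\textbf{Part (a).} First I would test the equation $(\xi_1+u)\d_xf+\cL f=Q$ against each of $X_+,X_0,X_-$ in the $\fH$ inner product. Since $X_\pm,X_0\in\Ker\cL$ and $\cL$ is self-adjoint, $\la X_j\cL f\ra=\la f\cL X_j\ra=0$; since $Q(x,\cdot)\perp\Ker\cL$ by \eqref{HypQ}, the right-hand side contributes nothing. Hence $\d_x\la(\xi_1+u)fX_j\ra=0$ for $j=+,0,-$, so each moment $\la(\xi_1+u)fX_j\ra$ is constant in $x$. The decay hypothesis $e^{\g x}f\in L^\infty(\bR_+;\fH)$ forces $\la(\xi_1+u)fX_j\ra(x)\to 0$ as $x\to+\infty$ (the pairing is continuous on $\fH$ because $(\xi_1+u)X_j\in\fH$, being a polynomial times a Gaussian weight, and $\|f(x,\cdot)\|_\fH\to 0$); a constant with zero limit is zero. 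This gives (a), and in particular $(\xi_1+u)f(x,\cdot)\perp X_0$ for every $x\ge 0$, which is exactly the hypothesis needed to invoke the intertwining relation $\bP_u(\cL f)=\cL(\bp_uf)$ from Lemma \ref{L-PtesPp}.

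\textbf{Part (b).} Now apply $\bP_u$ to the equation. Using $\bP_u((\xi_1+u)\d_xf)=(\xi_1+u)\bp_u\d_xf=(\xi_1+u)\d_x(\bp_uf)$ (the projection $\bp_u$ is $x$-independent) and $\bP_u(\cL f)=\cL(\bp_uf)$ from Lemma \ref{L-PtesPp} (legitimate by part (a)), and noting that $\cL(\bp_uf)=\cL(-\la(\xi_1+u)\psi_uf\ra\phi_u)=-\la(\xi_1+u)\psi_uf\ra\tau_u(\xi_1+u)\phi_u=\tau_u(\xi_1+u)\bp_uf$ by the generalized eigenvalue relation \eqref{GEP}, I obtain the scalar ODE
$$
\d_x\big((\xi_1+u)\bp_uf\big)+\tau_u(\xi_1+u)\bp_uf=\bP_uQ\,.
$$
Writing $(\xi_1+u)\bp_uf(x,\xi)=g(x)(\xi_1+u)\phi_u(\xi)$ with $g(x)=-\la(\xi_1+u)\psi_uf\ra(x)$, this is $g'+\tau_ug=-\la\psi_uQ\ra$ after projecting off $(\xi_1+u)\phi_u$ (or simply reading the coefficient). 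The general solution decaying at $+\infty$: multiply by $e^{\tau_ux}$, integrate from $x$ to $+\infty$, and use that $e^{\tau_ux}g(x)\to 0$ as $x\to+\infty$ — here is where the hypothesis $\g>\max(\tau_u,0)$ together with $e^{\g x}f, e^{\g x}Q\in L^\infty(\bR_+;\fH)$ is used: it guarantees both that the improper integral $\int_0^\infty e^{\tau_uz}\bP_uQ(x+z)dz$ converges absolutely and that the boundary term at $+\infty$ vanishes. This yields
$$
(\xi_1+u)\bp_uf(x,\xi)=-\int_0^\infty e^{\tau_uz}\bP_uQ(x+z,\xi)\,dz\,,
$$
which is the first claim of (b).

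\textbf{The compatibility condition \eqref{CondUnifDecay}.} For $-r<u<0$ one has $\tau_u>0$ by Proposition \ref{P-GEP}, so $e^{\tau_ux}$ is growing; the decay $e^{\g x}g(x)\to 0$ still kills the boundary term at $+\infty$, but now evaluating the solved ODE at $x=0$ gives $g(0)=-\int_0^\infty e^{\tau_uy}(-\la\psi_uQ\ra(y))\,dy$, i.e. $\la(\xi_1+u)\psi_uf\ra(0)+\int_0^\infty e^{\tau_uy}\la\psi_uQ\ra(y)\,dy=0$, which is \eqref{CondUnifDecay}. (Equivalently, pair the first identity of (b) at $x=0$ against $\psi_u$ and recall $\bP_u^*$ acts so that $\la\psi_u\bP_uQ\ra=-\la\psi_uQ\ra\la(\xi_1+u)\psi_u\phi_u\ra=\la\psi_uQ\ra$, using $\la(\xi_1+u)\psi_u\phi_u\ra=-1$ from the proof of Lemma \ref{L-PtesPp}.) The main obstacle is bookkeeping rather than conceptual: one must be careful that all the pairings $\la(\xi_1+u)\psi_uf\ra$, $\la\psi_uQ\ra$ make sense as honest convergent integrals (this follows from $\psi_u\in\Dom\cL$, hence $(1+|\xi|)\psi_u\in\fH$, so $(\xi_1+u)\psi_u\in\fH$), and that the $x$-differentiation under the moment sign is justified by the assumed integrability, which it is since $f$ solves a first-order ODE in $x$ with $\fH$-valued right-hand side.
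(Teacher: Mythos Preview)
Your proof is correct and follows essentially the same approach as the paper's: part (a) is identical, and for part (b) you apply $\bP_u$ to the equation, reduce to the scalar ODE $\d_x\la(\xi_1+u)\psi_uf\ra+\tau_u\la(\xi_1+u)\psi_uf\ra=\la\psi_uQ\ra$, and integrate. The only difference is that the paper treats the cases $u>0$ (so $\tau_u<0$) and $u<0$ (so $\tau_u>0$) separately, whereas you integrate from $x$ to $+\infty$ uniformly, using that $\g>\max(\tau_u,0)\ge\tau_u$ kills the boundary term at infinity in either case --- a harmless and slightly cleaner shortcut.
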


\begin{proof}
Any solution of (\ref{HalfSpLin}) satisfies
$$
\ba
\d_x\la(\xi_1+u)X_\pm f\ra&=-\la X_\pm\cL f\ra+\la X_\pm Q\ra=0\,,
\\
\d_x\la(\xi_1+u)X_0 f\ra&=-\la X_0\cL f\ra\,+\,\la X_0 Q\ra\,=0\,.
\ea
$$
Besides,
$$
\ba
\la(\xi_1+u)X_\pm f\ra\to 0\text{ as }x\to+\infty\,,
\\
\la(\xi_1+u)X_0 f\ra\to 0\text{ as }x\to+\infty\,,
\ea
$$
so that statement (a) holds.

Now for (b). For $0<|u|<r$, applying the first and second identities in Lemma \ref{L-PtesPp} shows that
$$
(\xi_1+u)\d_x\bp_uf+\cL(\bp_uf)=\d_x\bP_u((\xi_1+u)f)+\bP_u\cL f=\bP_uQ\,,
$$
since $(\xi_1+u)f\perp\Ker\cL$. Besides
$$
\cL(\bp_uf)=-\la(\xi_1+u)\psi_uf\ra\cL\phi_u=-\la(\xi_1+u)\psi_uf\ra\tau_u(\xi_1+u)\phi_u=\tau_u(\xi_1+u)\bp_uf\,,
$$
so that
$$
(\xi_1+u)(\d_x\bp_uf+\tau_u\bp_uf)=\bP_uQ\,,
$$
or, equivalently
$$
\d_x\la(\xi_1+u)\psi_uf\ra+\tau_u\la(\xi_1+u)\psi_uf\ra=\la\psi_uQ\ra\,.
$$
For $u$ small enough, one has $\tau_u<\g$, so that
$$
\d_x\left(e^{\tau_u x}\la(\xi_1+u)\psi_uf\ra\right)=e^{\tau_u x}\la\psi_uQ\ra=O(e^{(\tau_u-\g)x})\,.
$$
At this point, we study separately the cases $u>0$ and $u<0$.

\medskip
\noindent
\textit{Step 1.} If $0<u<r$, then $\tau_u<0$, so that
$$
\ba
e^{\tau_u x}\la(\xi_1+u)\psi_uf\ra(x)&=-\int_x^\infty e^{\tau_u y}\la\psi_uQ\ra(y)dy
\\
&=-\int_0^\infty e^{\tau_u (x+z)}\la\psi_uQ\ra(x+z)dz
\ea
$$
i.e.
$$
\la(\xi_1+u)\psi_uf\ra(x)=-\int_0^\infty e^{\tau_uz}\la\psi_uQ\ra(x+z)dz\,.
$$
Then
$$
\ba
\left|\int_0^\infty e^{\tau_uz}\la\psi_uQ\ra(x+z,\cdot)dz\right|\le\|\psi_u\|_{\fH}\int_0^\infty e^{\tau_uz}\|Q(x+z)\|_\fH dz
\\
\le\|\psi_u\|_{\fH}\sup_{y>0}\left(e^{\g y}\|Q(y,\cdot)\|_{\fH}\right)\int_0^\infty e^{\tau_uz}e^{-\g(x+z)}dz&\,,
\ea
$$
so that
$$
|\la(\xi_1+u)\psi_uf\ra(x)|\le\|\psi_u\|_{\fH}\sup_{y>0}\left(e^{\g y}\|Q(y,\cdot)\|_{\fH}\right)\frac{e^{-\g x}}{\g-\tau_u}\,.
$$

\medskip
\noindent
\textit{Step 2.} If $-r<u<0$, then $\tau_u>0$, so that
$$
\ba
\la(\xi_1+u)\psi_uf\ra(x)=e^{-\tau_u x}\la(\xi_1+u)\psi_uf\ra(0)+\int_0^xe^{-\tau_u(x-y)}\la\psi_uQ\ra(y)dy
\\
=e^{-\tau_u x}\left(\la(\xi_1+u)\psi_uf\ra(0)+\int_0^\infty e^{\tau_uy}\la\psi_uQ\ra(y)dy\right)
\\
-e^{\tau_ux}\int_x^\infty e^{\tau_uy}\la\psi_uQ\ra(y)dy\,.
\ea
$$
Since
$$
\ba
e^{-\tau_ux}\left|\int_x^\infty e^{\tau_uy}\la\psi_uQ\ra(y)dy\right|=\left|\int_0^\infty e^{\tau_uz}\la\psi_uQ\ra(x+z)dz\right|
\\
\le\|\psi_u\|_\fH\sup_{y>0}(e^{\g y}\|Q(y,\cdot)\|_\fH)\int_0^\infty e^{\tau_uz}e^{-\g(x+z)}dz&\,,
\ea
$$
one has
$$
\ba
\left|\la(\xi_1+u)\psi_uf\ra(x)-e^{-\tau_u x}\left(\la(\xi_1+u)\psi_uf\ra(0)+\int_0^\infty e^{\tau_uy}\la\psi_uQ\ra(y)dy\right)\right|
\\
\le\|\psi_u\|_\fH\sup_{y>0}(e^{\g y}\|Q(y,\cdot)\|_\fH)\frac{e^{-\g x}}{\g-\tau_u}&\,.
\ea
$$

Therefore, if $-r<u<0$, in general 
$$
\la(\xi_1+u)\psi_uf\ra(x)=O(e^{-\tau_ux})\,.
$$
Since $\tau_u\sim\dot\tau_0u$ as $u\to 0$, this exponential decay is not uniform in $u$ near $u=0$, unless
$$
\la(\xi_1+u)\psi_uf\ra(0)+\int_0^\infty e^{\tau_uy}\la\psi_uQ\ra(y)dy=0\,,
$$
in which case 
$$
\la(\xi_1+u)\psi_uf\ra(x)=O(e^{-\g x})\,,
$$
and this is precisely statement (b) in Lemma \ref{L-PtesHalfSp-f}
\end{proof}

\smallskip
Thus we seek solutions $f$ of (\ref{HalfSpLin}) in the form 
\be
\lb{DecompPp}
f=g-h\phi_u\,,
\ee
where $g$ satisfies
\be
\lb{HalfSp-g}
\left\{
\ba
{}&(\xi_1+u)\d_xg+\cL g=(I-\bP_u)Q\,,\quad x>0\,,\,\,\xi\in\bR^3\,,
\\
&\la(\xi_1+u)\psi_ug\ra(x)=0\,,\quad x>0\,,
\\
&g(x,\cdot)\to 0\text{ in }\fH\text{ as }x\to+\infty\,,
\ea
\right.
\ee
while
\be
\lb{h=}
h(x)=-\int_0^\infty e^{\tau_uz}\la\psi_uQ\ra(x+z)dy\,.
\ee
Observe that the condition $\la(\xi_1+u)\psi_ug\ra=0$ is equivalent to the fact that $\bp_ug=0$, so that $g=(I-\bp_u)f$.

Notice that
$$
\ba
{}&\la(\xi_1+u)\psi_uf\ra(0)+\int_0^\infty e^{\tau_uy}\la\psi_uQ\ra(y)dy
\\
&\qquad=
\la(\xi_1+u)\psi_ug\ra(0)-h(0)\la(\xi_1+u)\psi_u\phi_u\ra+\int_0^\infty e^{\tau_uy}\la\psi_uQ\ra(y)dy=0\,,
\ea
$$
since $\la(\xi_1+u)\psi_ug\ra(0)=0$ and 
$$
\la(\xi_1+u)\psi_u\phi_u\ra=\la(\xi_1+u)(\tfrac1uX_0+\psi_u)\phi_u\ra=\tfrac1u\la(\xi_1+u)\phi_u^2\ra=-1\,.
$$
In other words, the function $f$ defined as in (\ref{DecompPp}) satisfies the uniform exponential decay condition (\ref{CondUnifDecay}).

\subsection{The Ukai-Yang-Yu Penalization Method}

The formulation (\ref{HalfSp-g}) is precisely the one for which we use a penalization method. Indeed, any solution $g\in L^\infty(\bR_+;\fH\cap\Dom\cL)$ to (\ref{HalfSp-g}) satisfies
$$
\d_x\Pi((\xi_1+u)g)=\Pi(I-\bP_u)Q=\Pi Q=0\,,\text{ since }\IM\cL+\IM\bP_u\subset(\Ker\Pi)^\perp\,.
$$
Since we have assumed that $g(x,\cdot)\to 0$ in $\fH$ as $x\to+\infty$, one has
$$
\Pi((\xi_1+u)g)=0\,.
$$
Likewise, 
$$
\bp_ug=0\,,\quad\text{ since }\la(\xi_1+u)\psi_ug\ra=0\,.
$$
Therefore, if $g\in L^\infty(\bR_+;\fH)$ is a solution of (\ref{HalfSp-g}), then 
\be\lb{g->gg}
g_\g(x,\xi):=e^{\g x}g(x,\xi)
\ee
is a solution of the penalized problem
\be
\lb{HalfSpPen}
\left\{
\ba
{}&(\xi_1+u)\d_xg_{u,\g}+\cL^p g_{u,\g}=e^{\g x}(I-P_u)Q\,,\quad x>0\,,\,\,\xi\in\bR^3\,,
\\
&g_{u,\g}\in L^\infty(\bR_+;\fH\cap\Dom\cL)\,,
\ea
\right.
\ee
where the penalized linearized collision operator is defined by
$$
\cL^p_ug:=\cL g+\a\Pi_+((\xi_1+u)g)+\b\bp_ug-\g(\xi_1+u)g\,,
$$
for all $\a,\b>0$.

Conversely, we should seek under which condition(s) a solution of the penalized problem (\ref{HalfSpPen}) with appropriately chosen $\a,\b$ defines a solution of the original problem (\ref{HalfSp-g}) via (\ref{g->gg}). This is explained in the 
next lemma.

\begin{Lem}\lb{L-PenCond}
For $0<|u|<r$, let 
$$
\cA_u=\left(\begin{matrix}\a &0 &-u\b\la\psi_uX_+\ra\\ 0 &0 &-\b\la\phi_uX_0\ra\\ \a\la\psi_uX_+\ra &\tfrac1u\tau_u &\tau_u-\b\la\psi_u\phi_u\ra\end{matrix}\right)\,.
$$
There exists $0<r'\le r$ such that, whenever $0<|u|<r'$, the matrix $\cA_u$ has 3 distinct eigenvalues 
$$
\l_1(u)>\l_2(u)>0>\l_3(u)\,,
$$
such that
$$
\inf_{0<|u|<r'}\l_2(u)>0>\sup_{0<|u|<r'}\l_3(u)\,.
$$
Let $(l_1(u),l_2(u),l_3(u))$ be a real-analytic basis of left eigenvectors of $\cA_u$ defined for $0<|u|<r'$, and set
$$
\ba
Y_1[u](\xi):=(X_+(\xi),X_0(\xi),\psi_u(\xi))\cdot l_1(u)\,,
\\
Y_2[u](\xi):=(X_+(\xi),X_0(\xi),\psi_u(\xi))\cdot l_2(u)\,.
\ea
$$
Then, if $g_{u,\g}$ satisfies (\ref{HalfSpPen}), one has
$$
\la(\xi_1+u)X_+g_{u,\g}\ra=\la(\xi_1+u)\psi_ug_{u,\g}\ra=0
\Leftrightarrow
\left\{\begin{array}{l}\la(\xi_1+u)Y_1[u]g_{u,\g}\ra\rstr_{x=0}=0\,,\\ \la(\xi_1+u)Y_2[u]g_{u,\g}\ra\rstr_{x=0}=0\,.\end{array}\right.
$$
\end{Lem}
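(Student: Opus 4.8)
The plan is to derive a closed system of three linear ODEs in $x$ for the moments $a(x):=\la(\xi_1+u)X_+g_{u,\g}\ra$, $b(x):=\la(\xi_1+u)X_0g_{u,\g}\ra$ and $c(x):=\la(\xi_1+u)\psi_ug_{u,\g}\ra$, whose coefficient matrix is exactly $\cA_u$. First I would test the penalized equation in \eqref{HalfSpPen} against $X_+$, $X_0$ and $\psi_u$ (using $\la\cdot\ra$), exploiting that $\la X_\pm\cL g\ra=\la X_0\cL g\ra=0$ since $X_\pm,X_0\in\Ker\cL$, and that $\la\psi_u\cL g\ra=\la g\cL\psi_u\ra=\tfrac1u\la g\cL\phi_u\ra=\tfrac1u\tau_u\la(\xi_1+u)\phi_ug\ra$. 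The penalization terms contribute: from $\a\Pi_+((\xi_1+u)g)$ one gets $\a a(x)X_+$, whose $X_+$-moment is $\a a(x)$ and whose $\psi_u$-moment is $\a\la\psi_uX_+\ra a(x)$; from $\b\bp_ug=-\b c(x)\phi_u$ one gets contributions $-\b\la\phi_uX_0\ra c(x)$ against $X_0$ and $-\b\la\psi_u\phi_u\ra c(x)$ against $\psi_u$ (the $X_+$-moment of $\phi_u$ vanishes because $(\xi_1+u)\phi_u\perp\Ker\cL$ forces $\la X_+\phi_u\ra$... actually one must be a little careful: here the pairing is $\la X_+\phi_u\ra$, not $\la(\xi_1+u)X_+\phi_u\ra$, so I would instead note that the relevant matrix entry in the first row third column comes out to $-u\b\la\psi_uX_+\ra$ after also using the source-free structure and the relation $\psi_u=(\phi_u-\phi_0)/u$ with $\phi_0\in\Ker\cL$); and from $-\g(\xi_1+u)g$ one gets $-\g a,-\g b,-\g c$ down the diagonal. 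Finally, on the right-hand side, $\la X_+e^{\g x}(I-P_u)Q\ra=\la X_0 e^{\g x}(I-P_u)Q\ra=0$ because $Q\perp\Ker\cL$ by \eqref{HypQ} and $P_u$ maps into $\Ker\cL^\perp$ as well, while the $\psi_u$-moment of $(I-P_u)Q$ also vanishes by the defining property of $\bP_u$; hence the system is homogeneous: $\tfrac{d}{dx}(a,b,c)^T=\cA_u(a,b,c)^T$ after absorbing $-\g I$ into the already-$\g$-shifted operator, i.e. the eigenvalues of $\cA_u$ as written already incorporate the penalization shift. (I should double-check the exact placement of $\g$: the cleanest route is to write the moment system for $g$ itself, not $g_{u,\g}$, getting matrix $\cA_u+\g I$ or similar, and then note $g_{u,\g}=e^{\g x}g$ shifts eigenvalues by $\g$; I would choose whichever convention makes $\cA_u$ come out literally as displayed.)

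Second, I would analyze the spectrum of $\cA_u$ as $u\to0$. At $u=0$ the matrix degenerates, so the delicate point is the $1/u$ and $\tau_u$ entries; using $\tau_u=u\dot\tau_0+O(u^2)$ from Proposition \ref{P-GEP}, the entry $\tfrac1u\tau_u\to\dot\tau_0<0$ and $\tau_u\to0$, so $\cA_u$ has a finite limit $\cA_0=\left(\begin{smallmatrix}\a&0&0\\0&0&-\b\la X_0^2\ra\\ \a\la\psi_0X_+\ra&\dot\tau_0&-\b\la\psi_0\phi_0\ra\end{smallmatrix}\right)$ (with $\phi_0=X_0$). Its characteristic polynomial factors or can be analyzed directly: one eigenvalue is near $\a>0$ (from the $(1,1)$ block being almost decoupled, since the $(1,2),(1,3)$ entries are $0$ and $O(u)$), and the remaining $2\times2$ block $\left(\begin{smallmatrix}0&-\b\la X_0^2\ra\\ \dot\tau_0&-\b\la\psi_0\phi_0\ra\end{smallmatrix}\right)$ has determinant $\b\la X_0^2\ra\dot\tau_0<0$, hence one positive and one negative eigenvalue, both bounded away from $0$. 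By continuity (real-analyticity of $u\mapsto\cA_u$, which follows from real-analyticity of $u\mapsto\tau_u,\phi_u,\psi_u$), for $|u|$ small the three eigenvalues $\l_1(u)>\l_2(u)>0>\l_3(u)$ persist and stay uniformly separated from $0$; choose $r'$ accordingly. Real-analytic selection of left eigenvectors $l_j(u)$ is then standard (Kato, or just the adjugate-column formula since the eigenvalues are simple), giving the functions $Y_1[u],Y_2[u]$.

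Third, and this is really a linear-algebra observation once the system is set up: by the theory of linear ODE systems, a solution $(a,b,c)(x)$ of $(a,b,c)'=\cA_u(a,b,c)^T$ that is bounded (indeed $g_{u,\g}\in L^\infty(\bR_+;\fH)$ forces $(a,b,c)$ bounded) must lie, for all $x\ge0$, in the sum of the generalized eigenspaces of $\cA_u$ associated with eigenvalues of nonpositive real part — here, with $\l_1,\l_2>0>\l_3$, that means $(a,b,c)(x)$ lies in the one-dimensional stable eigenspace $\bR v_3(u)$ for every $x$, and conversely any such initial condition gives a bounded (in fact exponentially decaying) solution. Boundedness is thus equivalent to the initial datum $(a,b,c)(0)$ being annihilated by the two left eigenvectors $l_1(u),l_2(u)$ of the unstable eigenvalues, i.e. to $l_1(u)\cdot(a,b,c)(0)=l_2(u)\cdot(a,b,c)(0)=0$. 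Unwinding the definitions, $l_1(u)\cdot(a,b,c)(0)=\la(\xi_1+u)Y_1[u]g_{u,\g}\ra|_{x=0}$ and similarly for $Y_2$; and the left-hand side of the claimed equivalence, $a\equiv c\equiv0$ on $\bR_+$, is one way to realize $(a,b,c)\equiv0$, which certainly implies the two scalar conditions at $x=0$. For the reverse implication one argues: if the two conditions at $x=0$ hold then $(a,b,c)(0)\in\bR v_3(u)$, so the whole trajectory stays there and decays; but one must still deduce $a\equiv c\equiv0$ (not merely boundedness). Here I would invoke the already-established structural facts from the proof surrounding Lemma \ref{L-PtesHalfSp-f}: the components $\Pi((\xi_1+u)g)$ and $\bp_ug$ of a genuine solution of the \emph{un-penalized} problem vanish identically, and the role of $\a,\b$ in $\cL^p_u$ is precisely to make the penalized problem's bounded solutions coincide with those; so on the trajectory lying in the stable line one re-examines the ODEs to see that $a$ and $c$ are forced to vanish (the stable eigenvector $v_3(u)$ has, by direct computation from the structure of $\cA_u$, its first and third entries determined in a way that, combined with the second equation $b'=-\b\la\phi_uX_0\ra c-\g b$ decoupling when $c=0$... ) — \textbf{this compatibility bookkeeping is the main obstacle}: showing that the two scalar conditions at $x=0$, together with boundedness, upgrade to the pointwise identities $a\equiv c\equiv0$ requires either an explicit description of $v_3(u)$ or a separate argument that $b$ and the remaining moment also decay, closing the loop. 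I expect the resolution to be that $v_3(u)$ in fact has vanishing first and third components for structural reasons (the penalization was designed so that the stable mode is the ``physical'' one with $\bp_ug=0$ and $\Pi_+((\xi_1+u)g)=0$), making the equivalence immediate; verifying this from the matrix entries is the one genuine computation the proof cannot avoid.
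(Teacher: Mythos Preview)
Your overall strategy is exactly the paper's: derive the closed $3\times 3$ ODE system for the moments $(A_+,A_0,B):=(\la(\xi_1+u)X_+g_{u,\g}\ra,\la(\xi_1+u)X_0g_{u,\g}\ra,\la(\xi_1+u)\psi_ug_{u,\g}\ra)$, study the spectrum of $\cA_u$ via its limit at $u=0$ (where the characteristic polynomial factors as $(\a-\l)(\l^2-\b\la\psi_0X_0\ra\l+\dot\tau_0\b)$, giving two positive and one negative root since $\dot\tau_0<0$), and read off the equivalence from the left-eigenvector decomposition. The derivation of the matrix entries --- in particular the $(1,3)$ entry $-u\b\la\psi_uX_+\ra$, which comes from $\la X_+\phi_u\ra=\la X_+X_0\ra+u\la X_+\psi_u\ra=u\la X_+\psi_u\ra$ --- and the perturbation argument at $u=0$ are both correct.

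However, you have the sign of the ODE reversed, and this is precisely what generates your ``main obstacle.'' The moment system the paper obtains is
$$
\frac{d}{dx}\begin{pmatrix}A_+\\A_0\\B\end{pmatrix}+(\cA_u-\g I)\begin{pmatrix}A_+\\A_0\\B\end{pmatrix}=0\,,
$$
so the evolution matrix is $-(\cA_u-\g I)$, not $+\cA_u$. Choosing $0<\g<\inf_{0<|u|<r'}\l_2(u)$, the $l_j$-component evolves as $l_j(u)\cdot(A_+,A_0,B)(x)=\text{const}\cdot e^{-(\l_j(u)-\g)x}$: for $j=1,2$ this \emph{decays}, while for $j=3$ (with $\l_3<0<\g$) it \emph{grows}. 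Boundedness of $g_{u,\g}$ therefore forces $l_3(u)\cdot(A_+,A_0,B)\equiv 0$ automatically, and the bounded trajectories fill the two-dimensional span of $v_1,v_2$ --- not the one-dimensional line $\bR v_3$ you describe.

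Once this is corrected, your obstacle evaporates and the equivalence is pure linear algebra. For $(\Leftarrow)$: if the two scalar conditions $l_1\cdot(A_+,A_0,B)(0)=l_2\cdot(A_+,A_0,B)(0)=0$ hold, then together with the automatic $l_3$-condition one has $(A_+,A_0,B)(0)=0$ (since $l_1,l_2,l_3$ form a basis of $(\bR^3)^*$), hence $(A_+,A_0,B)\equiv 0$ and in particular $A_+\equiv B\equiv 0$. For $(\Rightarrow)$: if $A_+\equiv B\equiv 0$, substitute into the third row of the ODE to get $\tfrac1u\tau_u\,A_0(x)=0$; since $\tau_u\ne 0$ for $0<|u|<r'$ this gives $A_0\equiv 0$ directly, so the full vector vanishes and all $l_j$-components are zero, in particular at $x=0$. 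There is no need to compute $v_3(u)$ or to argue that it has vanishing first and third entries --- indeed it does not, in general, and no structural miracle of that kind is required.
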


\begin{proof}
A straightforward computation shows that
$$
\ba
{}&\d_x\la(\xi_1+u)X_+g_\g\ra+(\a-\g)\la(\xi_1+u)X_+g_\g\ra-u\b\la\psi_uX_+\ra\la(\xi_1+u)\psi_ug_\g\ra=0\,,
\\
&\d_x\la(\xi_1+u)X_0g_\g\ra-\g\la(\xi_1+u)X_0g_\g\ra-\b\la\phi_uX_0\ra\la(\xi_1+u)\psi_ug_\g\ra=0\,,
\\
&\d_x\la(\xi_1+u)\psi_ug_\g\ra+\tfrac1u\tau_u\la(\xi_1+u)X_0g_\g\ra+\tau_u\la(\xi_1+u)\psi_ug_\g\ra
\\
&\qquad+\a\la\psi_uX_+\ra\la(\xi_1+u)X_+g_\g\ra-(\g+\b\la\psi_u\phi_u\ra)\la(\xi_1+u)\psi_ug_\g\ra=0\,.
\ea
$$
Setting
$$
A_+=\la(\xi_1+u)X_+g_\g\ra\,,\quad A_0=\la(\xi_1+u)X_0g_\g\ra\,,\quad B=\la(\xi_1+u)\psi_ug_\g\ra\,,
$$
we see that
\be\lb{EqA+A0B}
\frac{d}{dx}\left(\begin{matrix}A_+\\ A_0\\ B\end{matrix}\right)+(\cA_u-\g I)\left(\begin{matrix}A_+\\ A_0\\ B\end{matrix}\right)=0\,.
\ee
Since the function $u\mapsto\psi_u$ is real-analytic on $(-r,r)$ with values in $\fH\cap\Dom\cL$, the matrix field $u\mapsto\cA_u$ is real-analytic on $(-r,r)$. Besides
$$
\cA_0=\left(\begin{matrix}\a &0 &0\\ 0 &0 &-\b\\ \a\la\psi_0X_+\ra &\dot\tau_0 &-\b\la\psi_0X_0\ra\end{matrix}\right)\,,
$$
with characteristic polynomial $(\a-\l)(\l^2-\b\la\psi_0X_0\ra\l+\dot\tau_0\b)$. Since $\b>0$ while $\dot\tau_0<0$, the matrix $\cA_0$ has $3$ simple eigenvalues, two of which, including $\a$, are positive, while one is negative. 

By a standard analytic perturbation argument, we therefore obtain 3 eigenvalues $\l_1(u),\l_2(u),\l_3(u)$ for $\cA_u$ that are real-analytic functions of $u$ defined on some neighborhood of the origin, and satisfy the inequalities mentioned in the 
statement of the lemma. The existence of left eigenvectors $l_1(u),l_2(u),l_3(u)$ of $\cA_u$ that are real-analytic functions of $u$ defined in some neighborhood of $0$ follows from the same argument --- see for instance chapter II, \S 1 in \cite{Kato}.

Choose $\g$ such that 
$$
0<\g<\inf_{0<|u|<r'}\l_2(u)\,.
$$
Taking the inner product of each side of (\ref{EqA+A0B}) with $l_1(u)$, $l_2(u)$ and $l_3(u)$, we see that
$$
(A_+,A_0,B)(x)\cdot l_j(u)e^{(\l_j(u)-\g)x}=\text{Const.}
$$
Since 
$$
\l_3(u)-\g<0\text{ and we have assumed that }g_\g\in L^\infty(\bR_+;\fH\cap\Dom\cL)\,,
$$
we conclude that $x\mapsto(A_+,A_0,B)(x)\cdot l_3(u)$ is bounded on $\bR_+$, so that
$$
(A_+,A_0,B)(x)\cdot l_3(u)e^{(\l_j(u)-\g)x}=0\,,\quad\text{ for all }x\in\bR_+\,.
$$
Therefore
$$
\ba
\Pi_+((\xi_1+u)g_\g)=0\text{ and }\bp_ug_\g=0
	&\Leftrightarrow\left\{\begin{array}{l}(A_+,A_0,B)\cdot l_1(u)=0\,,\\ (A_+,A_0,B)\cdot l_2(u)=0\,,\end{array}\right.
\\
&\Leftrightarrow\left\{\begin{array}{l}(A_+,A_0,B)\rstr_{x=0}\cdot l_1(u)=0\,,\\ (A_+,A_0,B)\rstr_{x=0}\cdot l_2(u)=0\,,\end{array}\right.
\ea
$$
in which case $g(x,\xi)=e^{-\g x}g_\g(x,\xi)$ is a solution of the original half-space problem (\ref{HalfSp-g}). Obviously, these conditions can be recast as in the statement of the lemma.
\end{proof}


\section{Resolution of the Penalized Linear Problem}


\subsection{The Penalized Linearized Collision Integral}
 
\begin{Prop}\lb{P-CoercL}
There exists $R>0$ defined in \eqref{Cond1}, $\nu^*>0$ defined in \eqref{DefNu*}, and $\Gamma$ defined in \eqref{DefGa}, such that 
$$
\la f\cL^p_uf\ra\ge\tfrac{\g}{24\nu^*}\la\nu f^2\ra\,,\quad\text{ for all } f\in\fH\cap\Dom\cL\,,
$$
for each $u$ such that $|u|\le R$, provided that 
\be\lb{Defabg}
\a=\b=2\g\quad\text{ and }\quad 0<\g\le\Gamma\,.
\ee
\end{Prop}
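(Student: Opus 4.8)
The plan is to estimate separately the hydrodynamic and microscopic parts of $f$. Write $f=\Pi f+f^\perp$, with $f^\perp:=(I-\Pi)f$ and $\Pi f=a_+X_++a_0X_0+a_-X_-$, so that $a_\pm=\la fX_\pm\ra$, $a_0=\la fX_0\ra$. For $f^\perp$ I would use the \emph{weighted} coercivity of $\cL$: since $0$ is an isolated eigenvalue of the nonnegative self-adjoint Fredholm operator $\cL$ on $\fH$, one has $\la f^\perp\cL f^\perp\ra\ge\l_1\|f^\perp\|_\fH^2$ for some $\l_1>0$; combining this with the identity $\la\nu(f^\perp)^2\ra=\la f^\perp\cL f^\perp\ra+\la f^\perp\cK f^\perp\ra$ coming from \eqref{nu-K} and with the $\fH$-boundedness of $\cK$ (Proposition \ref{P-PropK}) gives
$$\la f\cL f\ra=\la f^\perp\cL f^\perp\ra\ge c_0\la\nu(f^\perp)^2\ra\,,\qquad c_0:=\frac{\l_1}{\l_1+\|\cK\|_{B(\fH)}}>0\,,$$
a constant independent of $u$ and $\g$.

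The arithmetic of the hydrodynamic part rests on exact identities valid for $0<|u|<r$. Since $\cL\phi_u=\tau_u(\xi_1+u)\phi_u$ with $\tau_u\ne0$, we have $(\xi_1+u)\phi_u=\tau_u^{-1}\cL\phi_u\in\IM\cL=(\Ker\cL)^\perp$ in $\fH$, hence $\la(\xi_1+u)\phi_uX_+\ra=\la(\xi_1+u)\phi_uX_0\ra=\la(\xi_1+u)\phi_uX_-\ra=0$. Together with $\phi_u=X_0+u\psi_u$, the normalization $\la(\xi_1+u)\phi_u^2\ra=-u$, and the relations $\la\xi_1X_\pm^2\ra=\pm c$, $\la\xi_1X_0^2\ra=\la\xi_1X_\pm X_0\ra=\la\xi_1X_+X_-\ra=0$ of \S1, a short computation yields
$$\la(\xi_1+u)\psi_uX_\pm\ra=0\,,\quad\la(\xi_1+u)\psi_uX_0\ra=-1\,,\quad\la(\xi_1+u)X_+^2\ra=c+u\,,\quad\la(\xi_1+u)X_-^2\ra=u-c\,,$$
and $\la(\xi_1+u)X_0^2\ra=u$. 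Plugging $f=\Pi f+f^\perp$ into
$$\la f\cL^p_uf\ra=\la f\cL f\ra+\a\la(\xi_1+u)fX_+\ra\la fX_+\ra+\b\la f\bp_uf\ra-\g\la(\xi_1+u)f^2\ra$$
and using $\a=\b=2\g$ together with the identities above, the part purely quadratic in $(a_+,a_0,a_-)$ collapses to
$$\g\big((c+u)a_+^2+(2-u)a_0^2+(c-u)a_-^2\big)+O(\g|u|)(a_+^2+a_0^2+a_-^2)\,,$$
the $O(\g|u|)$ terms coming from the corrections $\la X_\pm\phi_u\ra=O(u)$, $\la X_0\phi_u\ra=1+O(u)$; since $c=\sqrt{5/3}>0$ this form is $\ge\tfrac c2\,\g(a_+^2+a_0^2+a_-^2)$ once $|u|\le R$ is small enough, which is one of the constraints defining $R$.

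Every remaining term of $\la f\cL^p_uf\ra$ is either $-\g\la(\xi_1+u)(f^\perp)^2\ra$, whose modulus is $\le2\g\nu_-^{-1}\la\nu(f^\perp)^2\ra$, or is bilinear in $(a_+,a_0,a_-)$ and one of the linear functionals $\la(\xi_1+u)X_\bullet f^\perp\ra$, $\la(\xi_1+u)\psi_uf^\perp\ra$, $\la\psi_uf^\perp\ra$, each bounded by $C\la\nu(f^\perp)^2\ra^{1/2}$ uniformly in $u$ near $0$ (elementary for the $X_\bullet$; for $\psi_u$ it uses the weighted $L^\infty$ bounds on $\phi_u$, hence on $\psi_u$, from Proposition \ref{P-GEP}), with coefficient $O(\g)$. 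By Young's inequality all these are $\le\tfrac c4\,\g(a_+^2+a_0^2+a_-^2)+C'\g\la\nu(f^\perp)^2\ra$; choosing $\Gamma$ small enough that $C'\Gamma\le\tfrac{c_0}{2}$ and requiring $\g\le\Gamma$ gives
$$\la f\cL^p_uf\ra\ge\tfrac12c_0\la\nu(f^\perp)^2\ra+\tfrac14c\,\g\,(a_+^2+a_0^2+a_-^2)\,.$$
Finally $\la\nu(\Pi f)^2\ra\le C_\Pi(a_+^2+a_0^2+a_-^2)$ ($X_+,X_0,X_-$ being fixed functions with finite $\nu$-weighted norm) and $\la\nu f^2\ra\le2\la\nu(f^\perp)^2\ra+2\la\nu(\Pi f)^2\ra$, so the right-hand side is $\ge\min\!\big(\tfrac{c_0}{4},\tfrac{c\g}{8C_\Pi}\big)\la\nu f^2\ra=\tfrac{c\g}{8C_\Pi}\la\nu f^2\ra$ for $\g\le\Gamma$ small; setting $\nu^*:=C_\Pi/(3c)$ turns this into the asserted bound $\tfrac{\g}{24\nu^*}\la\nu f^2\ra$. (For $u=0$ the operator $\bp_u$ is not defined; that case is the one treated in \cite{Golse2008} and is not needed here.)

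The one genuinely non-routine point is the start of the second paragraph: because $(\xi_1+u)\phi_u\perp\Ker\cL$, the off-diagonal quantities $\la(\xi_1+u)\psi_uX_\pm\ra$ vanish and $\la(\xi_1+u)\psi_uX_0\ra=-1$, so the penalization $\b\bp_u$ couples only to the $a_0$-mode; this is what makes the hydrodynamic $3\times3$ quadratic form diagonal and positive definite (the $a_+^2$ coming from the $\a\Pi_+$ penalty, the $a_0^2$ from $\b\bp_u$, the $a_-^2$ from $-\g\la(\xi_1+u)f^2\ra$ since $\la\xi_1X_-^2\ra=-c<0$). Everything after that is Young's inequality together with the smallness of $\g$ and $|u|$, i.e. pure bookkeeping.
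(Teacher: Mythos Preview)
Your argument is correct and follows essentially the same route as the paper's proof: the decomposition $f=\Pi f+f^\perp$ is the paper's $f=q+w$; your coercivity $\la f\cL f\ra\ge c_0\la\nu(f^\perp)^2\ra$ is the Bardos--Caflisch--Nicolaenko weighted spectral gap \eqref{BCN>} (which the paper quotes directly rather than deriving from the unweighted gap and boundedness of $\cK$); and the key observation that $(\xi_1+u)\phi_u\perp\Ker\cL$ forces $\la(\xi_1+u)\psi_u X_\pm\ra=0$, $\la(\xi_1+u)\psi_u X_0\ra=-1$ is exactly the paper's computation $\la(\xi_1+u)\psi_u q\ra=-\la X_0 q\ra$, which diagonalizes the hydrodynamic quadratic form. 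One small slip in your bookkeeping: among the ``remaining terms'' there is also $-\b\la(\xi_1+u)\psi_u f^\perp\ra\la f^\perp\phi_u\ra$, which is quadratic in $f^\perp$ rather than bilinear; but since $\la f^\perp\phi_u\ra=u\la f^\perp\psi_u\ra$ (as $f^\perp\perp X_0$) this term is $O(\g)\la\nu(f^\perp)^2\ra$ and is absorbed by the same smallness-of-$\g$ argument, so nothing is lost.
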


\begin{proof}
We first recall the Bardos-Caflisch-Nicolaenko weighted spectral gap estimate for $\cL$ (see equation (2.14) in \cite{BardCaflNico}): there exists $\ka_0>0$ such that
\be\lb{BCN>}
\la f\cL f\ra\ge\ka_0\la\nu(f-\Pi f)^2\ra\quad\text{ for each }f\in\fH\cap\Dom\cL\,.
\ee

Write
$$
w:=f-\Pi f\,,\qquad q:=\Pi f\,.
$$
Then
$$
\ba
\la f(\a\Pi_+((\xi_1+u)f)&+\b\bp_uf-\g(\xi_1+u)f)\ra
\\
=&\a\la qX_+\ra\la(\xi_1+u)X_+q\ra-\b\la q\phi_u\ra\la(\xi_1+u)\psi_uq\ra
\\
&+\a\la qX_+\ra\la(\xi_1+u)X_+w\ra-\b\la q\phi_u\ra\la(\xi_1+u)\psi_uw\ra
\\
&-\b\la w\phi_u\ra\la(\xi_1+u)\psi_uq\ra-\b\la w\phi_u\ra\la(\xi_1+u)\psi_uw\ra
\\
&-\g\la(\xi_1+u)q^2\ra-2\g\la(\xi_1+u)qw\ra-\g\la(\xi_1+u)w^2\ra\,.
\ea
$$
Since $(\xi_1+u)\phi_u\in\IM\cL=(\Ker\cL)^\bot$, one has
$$
\ba
\la(\xi_1+u)\psi_uq\ra=&\frac1u\left(\la(\xi_1+u)\phi_uq\ra-\la(\xi_1+u)X_0q\ra\right)
\\
=&-\frac1u\la(\xi_1+u)X_0q\ra=-\la X_0q\ra\,.
\ea
$$
Hence
$$
\la f(\a\Pi_+((\xi_1+u)f)+\b\bp_uf-\g(\xi_1+u)f)\ra=\cS_1[q]+\cS_2[q,w]+\cS_3[w]\,,
$$
with
$$
\ba
\cS_1[q]:=&\a\la qX_+\ra\la(\xi_1+u)X_+q\ra+\b\la X_0q\ra\la q\phi_u\ra
\\
&-\g\la(\xi_1+u)q^2\ra\,,
\\
\cS_2[q,w]:=&\a\la qX_+\ra\la(\xi_1+u)X_+w\ra+\b\la X_0q\ra\la w\phi_u\ra
\\
&-\b\la(\xi_1+u)\psi_uw\ra\la q\phi_u\ra-2\g\la(\xi_1+u)qw\ra\,,
\\
\cS_3[w]:=&-\b\la(\xi_1+u)\psi_uw\ra\la w\phi_u\ra-\g\la(\xi_1+u)w^2\ra\,.
\ea
$$
Note that
$$
\la(\xi_1+u)qw\ra^2\le\la|\xi_1+u|q^2\ra\la|\xi_1+u|w^2\ra\le\tfrac1{\nu_-^2}\la\nu q^2\ra\la\nu w^2\ra\,.
$$

Observe that
$$
\ba
\cS_1[q]\ge&(\a-\g)(c+u)q_+^2+(\b-u\g)q_0^2+\g(c-u)q_-^2
\\
&-\b|u|\|\psi_u\|_{L^2}|q_0|\sqrt{q_+^2+q_0^2+q_-^2}
\\
\ge&((\a-\g)(c+u)-\b|u|\|\psi_u\|_{L^2})q_+^2
\\
&+(\b-u\g-\b|u|\|\psi_u\|_{L^2})q_0^2
\\
&+(\g(c-u)-\b|u|\|\psi_u\|_{L^2})q_-^2\,,
\ea
$$
with
$$
q_\pm:=\la fX_\pm\ra\,,\qquad q_0:=\la fX_0\ra\,.
$$
In particular
$$
\cS_1[q]\ge\left(\min((\a-\g)(c+u),(\b-u\g),\g(c-u))-\b|u|\|\psi_u\|_{L^2}\right)(q_+^2+q_0^2+q_-^2)\,.
$$
Assume that
\be\lb{Cond1}
|u|\le R:=\min\left(\tfrac12r',c-1,\tfrac14\left(\sup_{|u|\le\min(r/2,1)}\|\psi_u\|_{L^2}\right)^{-1}\right)\,,
\ee
with $r'$ chosen as in Lemma \ref{L-PenCond}, and pick
\be\lb{Cond2}
\a=\b=2\g>0\,.
\ee

Then
$$
\ba
\cS_1[q]\ge&\left(\min(\g(c-|u|),\g(2-|u|))-\b|u|\|\psi_u\|_{L^2}\right)(q_+^2+q_0^2+q_-^2)
\\
\ge&\left(\g-\b|u|\|\psi_u\|_{L^2}\right)(q_+^2+q_0^2+q_-^2)
\\
\ge&\tfrac12\g(q_+^2+q_0^2+q_-^2)\ge\tfrac{\g}{6\nu^*}\la\nu q^2\ra
\ea
$$
since $0<c-1=\sqrt{\tfrac53}-1<1$, where
\be\lb{DefNu*}
\nu^*:=\max(\la\nu X_+^2\ra,\la\nu X_0^2\ra,,\la\nu X_-^2\ra)\,. 
\ee

On the other hand,
$$
|\cS_3[w]|\le\tfrac{\b\la\nu\psi_u^2\ra^{1/2}\la\phi_u^2/\nu\ra^{1/2}+\g}{\nu_-}\la\nu w^2\ra\,,
$$
so that, provided that $u$ satisfies \eqref{Cond1} while $\a,\b,\g$ satisfy \eqref{Cond2}, one has
$$
|\cS_3[w]|\le\tfrac12\ka_0\la\nu w^2\ra\,,
$$
if 
\be\lb{Cond3}
0<\g<\ka_0\nu_-\Bigg/\left(2+4\sup_{|u|\le R}\sqrt{\la\nu\psi_u^2\ra\la\phi_u^2/\nu\ra}\right)\,.
\ee

Finally
$$
|\cS_2[q,w]|\le\tfrac{2(\la\nu X_+^2\ra+\sup_{|u|\le R}\la\nu\phi_u^2\ra^{1/2}(\la\nu X_0^2\ra^{1/2}+\la\nu\psi_u^2\ra^{1/2})+1)}{\nu_-^2}\g\la\nu q^2\ra^{1/2}\la\nu w^2\ra^{1/2}\,.
$$

Thus, if $u$ satisfies \eqref{Cond1}, and $\a,\b,\g$ are chosen as in \eqref{Cond2}, \eqref{Cond3} and if
\be\lb{Cond4}
\g<\ka_0\nu_-^4\Big/48\nu^*(\la\nu X_+^2\ra+\sup_{|u|\le R}\la\nu\phi_u^2\ra^{1/2}(\la\nu X_0^2\ra^{1/2}+\la\nu\psi_u^2\ra^{1/2})+1)^2
\ee
one has
$$
\la f(\cL f+\a\Pi_+((\xi_1+u)f)+\b\bp_uf-\g(\xi_1+u)f)\ra\ge\tfrac{\ka_0}{4}\la\nu w^2\ra+\tfrac{\g}{12\nu^*}\la\nu q^2\ra\,.
$$
Therefore, the inequality in the proposition follows from the following choice of $\Gamma$:
\be\lb{DefGa}
\ba
{}&\Gamma:=\min(3\nu^*\ka_0,\Gamma_1,\Gamma_2)\,,\quad\quad\text{ with }\Gamma_1:=\tfrac{\ka_0\nu_-}{2+4\sup_{|u|\le R}\sqrt{\la\nu\psi_u^2\ra\la\phi_u^2/\nu\ra}}\,,
\\
&\Gamma_2:=\tfrac{\ka_0\nu_-^4}{48\nu^*(\la\nu X_+^2\ra+\sup_{|u|\le R}\la\nu\phi_u^2\ra^{1/2}(\la\nu X_0^2\ra^{1/2}+\la\nu\psi_u^2\ra^{1/2})+1)^2
}\,,
\ea
\ee
where $\ka_0$ is the Bardos-Caflisch-Nicolaenko spectral gap in \eqref{BCN>}. Obviously 
$$
\sup_{|u|\le R}(\|\nu\phi_u\|_\fH+\|\nu\psi_u\|_\fH)<\infty
$$
since the map $u\mapsto\psi_u$ is real-analytic on $(-r,r)$ with values in $\fH\cap\Dom\cL$.
\end{proof}

\subsection{The $L^2$ Theory}

Consider the unbounded operator defined on the Hilbert space $\cH=L^2(\bR_+;\fH)$ by
$$
\left\{
\ba
{}&\cT_uf=(\xi_1+u)\d_xf+\cL^pf\,,
\\
&\Dom\cT_u=\{\phi\in\cH\,|\,(\xi_1+u)\d_x\phi\text{ and }\nu\phi\in\cH\text{ while }\phi(0,\xi)=0\text{ for }\xi_1>-u\}\,,
\ea
\right.
$$
whose adjoint is
$$
\left\{
\ba
{}&\cT^*_ug=-(\xi_1+u)\d_xg+\cL f+\a(\xi_1+u)\Pi_+g+\b\bp_u^*g-\g(\xi_1+u)g\,,
\\
&\Dom\cT^*_u=\{\psi\in\cH\,|\,(\xi_1+u)\d_x\psi\text{ and }\nu\psi\in\cH\text{ while }\psi(0,\xi)=0\text{ for }\xi_1<-u\}\,,
\ea
\right.
$$
where
$$
\bp_u^*g=-(\xi_1+u)\psi_u\la\phi_u g\ra\,.
$$

Following the same argument as in the proof of Lemma 3.1 in \cite{Golse2008}, we arrive at the following statements.

\begin{Lem}\lb{L-InvT}
Let $R>0$ be defined by \eqref{Cond1}, and let $\a=\b=2\g>0$ satisfy
$$
0<\g\le\min(\Gamma,\tfrac12\nu_-)
$$
with $\Gamma$ defined in \eqref{DefGa}. Then there exists $\ka\equiv\ka(R,\nu_-,\g)>0$ such that
$$
\ba
{}&\ka\|\nu\phi\|_\cH\le\|\cT_u\phi\|_\cH\,,&&\quad\text{ for each }\phi\in\Dom\cT_u\,,
\\
&\ka\|\nu\psi\|_\cH\le\|\cT^*_u\psi\|_\cH\,,&&\quad\text{ for each }\psi\in\Dom\cT^*_u\,.
\ea
$$
uniformly in $|u|\le R$. In particular $\Ker\cT_u=\{0\}$ and $\IM\cT_u=\fH$ whenever $|u|\le R$.
\end{Lem}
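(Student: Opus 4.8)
\emph{Overall plan.} I would derive the two a priori estimates by the energy method, the only nontrivial input being the coercivity inequality of Proposition \ref{P-CoercL}, and then deduce $\Ker\cT_u=\{0\}$ and $\IM\cT_u=\cH$ by the closed range theorem. \emph{Step 1 (basic estimate).} Given $\phi\in\Dom\cT_u$, pair $\cT_u\phi$ with $\phi$ in $\cH$. Integrating the transport term by parts,
$$
\int_0^\infty\la(\xi_1+u)\d_x\phi\cdot\phi\ra\,dx=\tfrac12\big[\la(\xi_1+u)\phi^2\ra\big]_{x=0}^{x=+\infty}=-\tfrac12\la(\xi_1+u)\phi(0,\cdot)^2\ra\ge 0\,,
$$
since $\phi(0,\xi)=0$ for $\xi_1+u>0$, so the surviving integral runs over $\{\xi_1+u<0\}$, where the weight is $\le0$; the value at $x=+\infty$ vanishes because $\phi$ and $(\xi_1+u)\d_x\phi$ both lie in $\cH$. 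By Proposition \ref{P-CoercL} (with $\a=\b=2\g$, $\g\le\Gamma$) the zeroth-order term contributes at least $\tfrac{\g}{24\nu^*}\|\sqrt\nu\,\phi\|_\cH^2$; combined with Cauchy--Schwarz in $\cH$ and $\|\phi\|_\cH\le\nu_-^{-1/2}\|\sqrt\nu\,\phi\|_\cH$ (from \eqref{CollFreqIneq}), this gives $\|\sqrt\nu\,\phi\|_\cH\le C\|\cT_u\phi\|_\cH$, with $C=C(R,\nu_-,\g)$ uniform in $|u|\le R$.

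\emph{Step 2 (upgrading the weight).} Now pair $\cT_u\phi$ with $\nu\phi$. The transport term again integrates to $-\tfrac12\la(\xi_1+u)\nu\,\phi(0,\cdot)^2\ra\ge0$ by the same sign argument. Using Hilbert's decomposition \eqref{nu-K}, $\cL^p_u\phi=\nu\phi-\cK\phi+\a\Pi_+((\xi_1+u)\phi)+\b\bp_u\phi-\g(\xi_1+u)\phi$; pairing with $\nu\phi$ extracts $\|\nu\phi\|_\cH^2$ from the first term, while the rest is bounded, by Young's inequality with small weights, by $\eps\|\nu\phi\|_\cH^2+C_\eps\|\phi\|_\cH^2$ — using the $\fH$-boundedness of $\cK$ (Lemma \ref{L-PropK}), the uniform bounds $\sup_{|u|\le R}(\|\nu\phi_u\|_\fH+\|\nu\psi_u\|_\fH)<\infty$ from Proposition \ref{P-GEP}, and the pointwise inequality $\g|\xi_1+u|\le\tfrac12\nu$ (valid because $|\xi_1+u|\le1+|\xi|\le\nu/\nu_-$ by $R<1$ and \eqref{CollFreqIneq}, and $\g\le\tfrac12\nu_-$), which caps the coefficient of the dangerous term $\g(\xi_1+u)\phi$ at $\tfrac12$. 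Absorbing the $\|\nu\phi\|_\cH^2$ contributions yields $c\,\|\nu\phi\|_\cH^2\le\|\cT_u\phi\|_\cH\|\nu\phi\|_\cH+C\|\phi\|_\cH^2$; inserting the bound of Step 1 and applying Young's inequality once more gives $\ka\|\nu\phi\|_\cH\le\|\cT_u\phi\|_\cH$ with $\ka=\ka(R,\nu_-,\g)>0$.

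\emph{Step 3 (adjoint and surjectivity).} For $\psi\in\Dom\cT^*_u$ the same two steps apply: the transport term now gives $+\tfrac12\la(\xi_1+u)\psi(0,\cdot)^2\ra\ge0$ since $\psi(0,\xi)=0$ for $\xi_1+u<0$, and the zeroth-order part of $\cT^*_u$ satisfies $\la g\,(\cL g+\a(\xi_1+u)\Pi_+g+\b\bp_u^*g-\g(\xi_1+u)g)\ra=\la g\,\cL^p_ug\ra$, because $\Pi_+$ is self-adjoint and $\la g\,\bp_u^*g\ra=-\la\phi_ug\ra\la(\xi_1+u)\psi_ug\ra=\la g\,\bp_ug\ra$; hence Proposition \ref{P-CoercL} applies to $\cT^*_u$ verbatim and Steps 1--2 give $\ka\|\nu\psi\|_\cH\le\|\cT^*_u\psi\|_\cH$. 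Finally, the estimate on $\cT_u$ shows $\cT_u$ is injective with closed range, while the estimate on $\cT^*_u$ shows $(\IM\cT_u)^\perp=\Ker\cT^*_u=\{0\}$; therefore $\IM\cT_u$ is dense and closed, i.e.\ equals $\cH$.

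\emph{Main difficulty.} The arithmetic in Step 2 — choosing the Young weights so that every $\|\nu\phi\|_\cH^2$ term can be absorbed — is routine. The genuinely delicate point is making the boundary manipulations rigorous: the existence of the traces $\phi(0,\cdot)$ and $\psi(0,\cdot)$, and the vanishing of $\la(\xi_1+u)\phi(x)^2\ra$ as $x\to+\infty$, for elements of the graph domains of $\cT_u$ and $\cT^*_u$. This is carried out exactly as in the proof of Lemma 3.1 of \cite{Golse2008}, the only new ingredient being the uniform-in-$u$ control of the penalization terms furnished by Proposition \ref{P-GEP}.
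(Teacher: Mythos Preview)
Your proof is correct. Step~1 and Step~3 match the paper's argument essentially verbatim. The genuine difference is in Step~2: to upgrade from $\|\sqrt\nu\,\phi\|_\cH$ to $\|\nu\phi\|_\cH$, you run a second weighted energy estimate (pair $\cT_u\phi$ with $\nu\phi$, extract $\|\nu\phi\|_\cH^2$ from the multiplication part of $\cL$, absorb the $\g(\xi_1+u)$ term pointwise via $\g|\xi_1+u|\le\tfrac12\nu$, and handle the remaining bounded pieces $\cK,\Pi_+,\bp_u$ by Young). The paper instead peels off the scalar transport part, writing
$$
\|(\xi_1+u)\d_x g+(\nu-\g(\xi_1+u))g\|_\cH\le C\|\cT_ug\|_\cH
$$
(using Step~1 to control the bounded remainders), and then solves the transport ODE explicitly by Duhamel, bounding the resulting convolution with the Green's function $G$ of \eqref{DefG} by Young's convolution inequality. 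Your route is slightly more economical for the $L^2$ statement alone; the paper's route has the structural payoff that the same Green's function $G$ and the same convolution estimates \eqref{GLp} are reused throughout the $L^\infty$ theory in \S5.3, so introducing them here avoids duplication later.
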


\begin{proof}
We briefly recall the proof of Lemma 3.1 in \cite{Golse2008} for the sake of completeness. 

If $g\in\Dom\cT_u$, one has in particular 
$$
\nu g\in L^2(Md\xi dx)\quad\text{ and }\quad\la(\xi_1+u)g^2\ra\in C(\bR_+)\,,
$$
so that there exists $L_n\to\infty$ such that $\la(\xi_1+u)g^2\ra(L_n)\to 0$ as $n\to\infty$. Thus
$$
\ba
\int_0^{L_n}\la g\cT_ug\ra dx=&\tfrac12\la(\xi_1+u)g^2\ra(L_n)-\tfrac12\la(\xi_1+u)g^2\ra(0)+\int_0^{L_n}\la g\cL^pg\ra dx\,,
\ea
$$
and letting $n\to\infty$, one arrives at
$$
\ba
\|g\|_\cH\|\cT_ug\|_\cH\ge\int_0^{\infty}\la g\cT_ug\ra dx=&-\tfrac12\la(\xi_1+u)g^2\ra(0)+\int_0^\infty\la g\cL^pg\ra dx
\\
\ge&\int_0^\infty\la g\cL^pg\ra dx\ge\tfrac{\g}{24\nu^*}\|\sqrt\nu g\|_\cH^2\,.
\ea
$$
Notice that
$$
-\tfrac12\la(\xi_1+u)g^2\ra(0)\ge 0
$$
for $g\in\Dom\cT_u$ because of the boundary condition at $x=0$ included in the definition of the domain $\Dom\cT_u$. Hence
$$
\|\cT_ug\|_\cH\ge\tfrac{\g\nu_-^{1/2}}{24\nu^*}\|g\|_\cH\,.
$$

Next
$$
\ba
\|\cT_ug\|_\cH=&\|(\xi_1+u)\d_xg+(\nu-\g(\xi_1+u))g\|_\cH
\\
&-\|\cK g\|_\cH-\a\|\Pi_+((\xi_1+u)g)\|_\cH-\b\|\bp_ug\|_\cH
\\
\ge&\|(\xi_1+u)\d_xg+(\nu-\g(\xi_1+u))g\|_\cH
\\
&-(\a\la(\xi_1+u)^2X^2_+\ra^{1/2}+\b\la(\xi_1+u)^2\psi_u^2\ra^{1/2}\la\phi_u^2\ra^{1/2}+\|\cK\|)\|g\|_\cH
\ea
$$
so that
\be\lb{Coerc1}
\|(\xi_1+u)\d_xg+(\nu-\g(\xi_1+u))g\|_\cH\le C\|\cT_ug\|_\cH
\ee
with
\be\lb{DefC}
C\!:=\!\left(1\!+\!\tfrac{24\nu^*}{\g\nu_-^{1/2}}\sup_{|u|\le R}\left(\a\sqrt{\la(\xi_1\!+\!u)^2X^2_+\ra}\!+\!\b\sqrt{\la(\xi_1\!+\!u)^2\psi_u^2\ra\la\phi_u^2\ra}\!+\!\|\cK\|\right)\right)\,.
\ee

Given $S\in\cH$, solve for $h\in\Dom\cT_u$ the equation 
$$
(\xi_1+u)\d_xh+(\nu-\g(\xi_1+u))h=S\,,\quad x>0\,.
$$
Since $h\in\Dom\cT_u$ it satisfies the boundary condition $h(0,\xi)=0$ for $\xi_1>-u$, so that
$$
h(x,\xi)=\int_0^x\exp\left(-\left(\tfrac{\nu}{\xi_1+u}-\g\right)(x-y)\right)\frac{S(y,\xi)}{\xi_1+u}dy\,,\quad \xi_1+u>0\,,
$$
so that
$$
|h(x,\xi)|\le\int_0^x\exp\left(-\left(\tfrac{\nu}{\xi_1+u}-\g\right)(x-y)\right)\frac{|S(y,\xi)|}{|\xi_1+u|}dy\,,\quad \xi_1+u>0\,.
$$
On the other hand, since $h\in\cH$, there exists a sequence $x_n\to \infty$ such that $\la h(x_n,\cdot)^2\ra\to 0$, so that
$$
h(x,\xi)=\int_x^\infty\exp\left(-\left(\tfrac{\nu}{|\xi_1+u|}+\g\right)(y-x)\right)\frac{S(y,\xi)}{|\xi_1+u|}dy\,,\quad\xi_1+u<0\,,
$$
and hence
$$
\ba
|h(x,\xi)|\le&\int_x^\infty\exp\left(-\left(\tfrac{\nu}{|\xi_1+u|}+\g\right)(y-x)\right)\frac{|S(y,\xi)|}{|\xi_1+u|}dy
\\
\le&\int_x^\infty\exp\left(-\left(\tfrac{\nu}{|\xi_1+u|}-\g\right)(y-x)\right)\frac{|S(y,\xi)|}{|\xi_1+u|}dy\,,\quad\xi_1+u<0\,.
\ea
$$
Therefore
$$
|h(\cdot,\xi)|\le G(\cdot,\xi)\star(|S(\cdot,\xi)|\indc_{\bR_+})
$$
with
\be\lb{DefG}
G(z,\xi)=\frac{\indc_{z(\xi_1+u)>0}}{|\xi_1+u|}\exp\left(-\left(\tfrac{\nu}{|\xi_1+u|}-\g\right)|z|\right).
\ee

For future use, we compute, for all $p\ge 1$,
\be\lb{GLp}
\ba
\|G(\cdot,\xi)\|_{L^p}\le&\frac1{|\xi_1+u|}\frac1{(p(\tfrac{\nu}{|\xi_1+u|}-\g))^{1/p}}
\\
=&\frac1{p^{1/p}|\xi_1+u|^{1-1/p}(\nu-\g|\xi_1+u|)^{1/p}}
\ea
\ee

Then we conclude from Young's convolution inequality and \eqref{GLp} with $p=1$ that
$$
\ba
\|h(\cdot,\xi)\|_{L^2(\bR_+)}\le&\|G(\cdot,\xi)\|_{L^1}\|S(\cdot,\xi)\|_{L^2}
\\
\le&\frac{\|S(\cdot,\xi)\|_{L^2}}{\nu-\g|\xi_1+u|}\le\frac{\|S(\cdot,\xi)\|_{L^2}}{(1-\frac\g{\nu_-})\nu(\xi)}\,,
\ea
$$
for $|u|\le R$, and hence
$$
(1-\tfrac\g{\nu_-})\|\nu h\|_\cH\le\|S\|_\cH\,.
$$
Applying this to
$$
S:=(\xi_1+u)\d_xg+(\nu(\xi)-\g(\xi_1+u))g\,,
$$
and using the bound \eqref{Coerc1} shows that
$$
(1-\tfrac\g{\nu_-})\|\nu g\|_\cH\le C\|\cT_ug\|_\cH\,.
$$
This obviously implies the first inequality in the lemma with
\be\lb{DefKa}
\ka:=\frac{\nu_--\g}{C\nu_-}\quad\text{ with }C\text{ defined in \eqref{DefC}.}
\ee
The analogous inequality for the adjoint operator $\cT_u$ is obtained similarly.

Now the first inequality obviously implies that 
$$
\Ker\cT_u=\{0\}\quad\text{ for }|u|\le R\,.
$$
The second inequality implies that $\IM\cT_u=\cH$, according to Theorem 2.20 in \cite{BrezisFAEng}.
\end{proof}

\smallskip
A straightforward application of Lemma \ref{L-InvT} is the following existence and uniqueness result.

\begin{Prop}\lb{P-ExUnL2}
Let $R>0$ be defined by \eqref{Cond1}, set $\a=\b=2\g>0$ with $0<\g\le\min(\Gamma,\tfrac12\nu_-)$, where $\Gamma$ is defined in \eqref{DefGa}, and let $\ka\equiv\ka(R,\nu_-,\g)>0$ be given by \eqref{DefKa}. 

Let $Q$ satisfy $e^{\g x}Q\in\cH$, while $\nu g_b\in\fH$. Then, for each $|u|<R$, there exists a unique solution $g_{u,\g}\in\Dom\cT_u$ of the linearized penalized problem
\be
\lb{HalfSpLinPen}
\left\{
\ba
{}&(\xi_1+u)\d_xg_{u,\g}+\cL^p g_{u,\g}=e^{\g x}(I-\bP_u)Q\,,\quad x>0\,,\,\,\xi\in\bR^3\,,
\\
&g_{u,\g}(0,\xi)=g_b(\xi)\,,\quad\xi_1+u>0\,.
\ea
\right.
\ee
Moreover, this solution satisfies the estimate
\be\lb{cH}
\ba
\ka\|\nu g_{u,\g}\|_\cH\le&(1+\sup_{|u|\le R}\sqrt{\la\psi_u^2\ra\la(\xi_1+u)^2\phi_u^2\ra})\|e^{\g x}Q\|_\cH
\\
&+\left(\frac{\sqrt{2\g}}{\nu_-}+\frac{\|\cL\|_{B(\Dom\cL,\fH)}}{\nu_-\sqrt{2\g}}+2\g+\frac{2\g}{\nu_-}\sup_{|u|\le R}\sqrt{\la\psi_u^2\ra\la\phi_u^2\ra}\right)\|\nu g_b\|_\fH
\ea
\ee
uniformly in $|u|\le R$.
\end{Prop}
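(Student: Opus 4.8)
The plan is to deduce Proposition~\ref{P-ExUnL2} from Lemma~\ref{L-InvT} by removing the inhomogeneity from the boundary condition in \eqref{HalfSpLinPen}. Since $\Dom\cT_u$ contains only functions that vanish on the incoming set $\{\xi_1+u>0\}$ at $x=0$, I would first subtract from the sought solution an explicit, exponentially decaying lift $G_b$ of the boundary datum, then solve the resulting problem with homogeneous incoming trace by inverting $\cT_u$ (which Lemma~\ref{L-InvT} shows to be a bijection from $\Dom\cT_u$ onto $\cH$ with a lower bound uniform in $u$), and finally add the lift back.

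\emph{Step 1: the lift.} Take
\[
G_b(x,\xi):=e^{-\g x}g_b(\xi)\,\indc_{\xi_1+u>0}\,,
\]
so that $G_b(0,\cdot)=g_b$ on $\{\xi_1+u>0\}$ and $(\xi_1+u)\d_xG_b=-\g(\xi_1+u)G_b$. From $\int_0^\infty e^{-2\g x}dx=\tfrac1{2\g}$ and $\nu g_b\in\fH$ one has $\nu G_b\in\cH$ with $\|\nu G_b\|_\cH=\tfrac1{\sqrt{2\g}}\|\nu g_b\|_\fH$. Consequently the differential expression
\[
\cT_uG_b=(\xi_1+u)\d_xG_b+\cL G_b+\a\Pi_+((\xi_1+u)G_b)+\b\bp_uG_b-\g(\xi_1+u)G_b
\]
lies in $\cH$, and $\|\cT_uG_b\|_\cH$ is controlled by $\|\nu g_b\|_\fH$ uniformly in $|u|\le R$: the transport and $\g(\xi_1+u)$ terms via $|\xi_1+u|\le\nu/\nu_-$ (a consequence of $R<1$ and \eqref{CollFreqIneq}), the term $\cL G_b$ via $\|\cL\|_{B(\Dom\cL,\fH)}\|\nu G_b\|_\cH$, the term $\a\Pi_+((\xi_1+u)G_b)$ since $\Pi_+$ is an $\fH$-orthogonal projection, and $\b\bp_uG_b$ via the rank-one formula \eqref{Def-p} together with the uniform bounds on $\phi_u,\psi_u$ from Proposition~\ref{P-GEP}; recall $\a=\b=2\g$.

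\emph{Step 2: inversion and estimate.} Put $S:=e^{\g x}(I-\bP_u)Q-\cT_uG_b$. Since $\bP_u$ is bounded on $\fH$ with $\|\bP_u\|_{B(\fH)}\le\sqrt{\la\psi_u^2\ra\la(\xi_1+u)^2\phi_u^2\ra}$ and $e^{\g x}Q\in\cH$, one has $S\in\cH$; hence, for $|u|\le R$ and $0<\g\le\min(\Gamma,\tfrac12\nu_-)$, Lemma~\ref{L-InvT} produces a unique $\tilde g=\cT_u^{-1}S\in\Dom\cT_u$ with $\ka\|\nu\tilde g\|_\cH\le\|S\|_\cH$. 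Setting $g_{u,\g}:=\tilde g+G_b$ one checks directly that $(\xi_1+u)\d_xg_{u,\g}+\cL^p_ug_{u,\g}=\cT_u\tilde g+\cT_uG_b=e^{\g x}(I-\bP_u)Q$ and that $g_{u,\g}(0,\cdot)=G_b(0,\cdot)=g_b$ on $\{\xi_1+u>0\}$, so $g_{u,\g}$ solves \eqref{HalfSpLinPen}; uniqueness is immediate, as the difference of two solutions lies in $\Dom\cT_u$ and is annihilated by $\cT_u$. Finally
\[
\ba
\ka\|\nu g_{u,\g}\|_\cH&\le\ka\|\nu\tilde g\|_\cH+\ka\|\nu G_b\|_\cH\le\|S\|_\cH+\ka\|\nu G_b\|_\cH
\\
&\le(1+\|\bP_u\|_{B(\fH)})\|e^{\g x}Q\|_\cH+\|\cT_uG_b\|_\cH+\ka\|\nu G_b\|_\cH\,,
\ea
\]
and \eqref{cH} is obtained by distributing $\|\cT_uG_b\|_\cH+\ka\|\nu G_b\|_\cH$ over the free-transport, $\cL$, $\a\Pi_+$ and $\b\bp_u$ pieces identified in Step~1 and bounding each by $\|\nu g_b\|_\fH$ as indicated there (using $\ka\le1$).

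The only slightly fussy part of this argument is the bookkeeping in Step~1: one must check that $G_b$, $\nu G_b$ \emph{and} every term of $\cT_uG_b$ --- including the near-grazing contributions from $\{\xi_1+u\approx0\}$ and the penalization terms, whose coefficients $\a,\b$ are tied to $\g$ --- are dominated by $\|\nu g_b\|_\fH$ with constants independent of $u$. Once this is in place there is no further obstacle: existence, uniqueness, and the uniform estimate follow directly from Lemma~\ref{L-InvT} and from the uniform-in-$u$ information already recorded in Propositions~\ref{P-GEP} and~\ref{P-CoercL}.
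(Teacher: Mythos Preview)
Your proof is correct and follows essentially the same route as the paper: subtract the explicit lift $G_b(x,\xi)=e^{-\g x}g_b(\xi)\indc_{\xi_1+u>0}$, apply Lemma~\ref{L-InvT} to invert $\cT_u$ on the resulting source $S=e^{\g x}(I-\bP_u)Q-\cT_uG_b$, and collect the term-by-term bounds on $S$. You are in fact slightly more careful than the paper in the final estimate, since you explicitly account for the contribution $\ka\|\nu G_b\|_\cH$ when passing from $\tilde g$ back to $g_{u,\g}=\tilde g+G_b$.
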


\begin{proof}
Set
$$
h(x,\xi)=g(x,\xi)-g_b(\xi)\indc_{\xi_1+u>0}e^{-\g x}\,,\quad x>0\,.
$$
Then $h\in\Dom\cT_u$ if and only if
$$
(\xi_1+u)\d_xg\in\cH\text{ and }\nu g\in\cH\,,\quad\text{ and }g(0,\xi)=g_b(\xi)\text{ for }\xi_1+u>0\,,
$$
in which case
$$
\ba
\cT_uh(x,\xi)=&\,\cT_ug(x,\xi)+\g e^{-\g x}(\xi_1+u)^+g_b(\xi)-e^{-\g x}\cL^p(g_b\indc_{\xi_1+u>0})(\xi)
\\
=&\,e^{\g x}(I-\bP_u)Q(x,\xi)+\g e^{-\g x}(\xi_1+u)^+g_b(\xi)-e^{-\g x}\cL^p(g_b\indc_{\xi_1+u>0})(\xi)
\\
=&:S(x,\xi)
\ea
$$
if and only if $g$ is a solution to the problem \eqref{HalfSpLinPen}. (We use systematically the classical notation $z^+=\max(z,0)$.) The right hand side is recast as
$$
\ba
S(x,\xi)=&e^{\g x}(I-\bP_u)Q(x,\xi)+2\g e^{-\g x}(\xi_1+u)^+g_b(\xi)-e^{-\g x}\cL(g_b\indc_{\xi_1+u>0})(\xi)
\\
&-\a e^{-\g x}\Pi_+((\xi_1+u)^+g_b)(\xi)-\b e^{-\g x}\bp_u\left(g_b\indc_{\xi_1+u>0}\right)(\xi)\,,
\ea
$$
and estimated as follows:
$$
\ba
\|S\|_\cH\le&(1+\sup_{|u|\le R}\sqrt{\la\psi_u^2\ra\la(\xi_1+u)^2\phi_u^2\ra})\|e^{\g x}Q\|_\cH
\\
&+\left(\frac{\sqrt{2\g}}{\nu_-}+\frac{\|\cL\|_{B(\Dom\cL,\fH)}}{\nu_-\sqrt{2\g}}+\frac{\a}{\sqrt{2\g}}+\frac{\b}{\nu_-\sqrt{2\g}}\sup_{|u|\le R}\sqrt{\la\psi_u^2\ra\la\phi_u^2\ra}\right)\|\nu g_b\|_\fH\,.
\ea
$$
One concludes with the first inequality in Lemma \ref{L-InvT}.
\end{proof}

\subsection{The $L^\infty$ Theory}

\subsubsection{From $\cH$ to $L^2(Md\xi;L^\infty(\bR_+))$}

We recall that the linearized collision operator $\cL$ is split as $\cL=\nu-\cK$, where $\cK$ is compact on $L^2(\bR^3;Md\xi )$ (Hilbert's decomposition). With the notation
$$
\tilde Q:=e^{\g x}(I-\bP_u)Q\,,
$$
the solution of (\ref{HalfSpLinPen}) in $\fH$ satisfies
$$
\ba
g_{u,\g}(x,\xi)=&\exp\left(-\left(\tfrac{\nu}{\xi_1+u}-\g\right)x\right)g_b(\xi)
\\
&+\int_0^x\exp\left(-\left(\tfrac{\nu}{\xi_1+u}-\g\right)(x-y)\right)\frac{(\cK^pg_{u,\g}+\tilde Q)(y,\xi)}{\xi_1+u}dy\,,&&\xi_1>-u\,,
\\
g_{u,\g}(x,\xi)=&\int_x^\infty\exp\left(-\left(\tfrac{\nu}{|\xi_1+u|}+\g\right)(y-x)\right)\frac{(\cK^pg_{u,\g}+\tilde Q)(y,\xi)}{|\xi_1+u|}dy\,,&&\xi_1<-u\,,
\ea
$$
where $\cK^p=\nu-\cL^p$. In particular
$$
\ba
|g_{u,\g}(x,\xi)|\le&|g_b(\xi)|
\\
&+\int_0^x\exp\left(-\left(\tfrac{\nu}{|\xi_1+u|}-\g\right)|x-y|)\right)\frac{|\cK^pg_{u,\g}+\tilde Q|(y,\xi)}{|\xi_1+u|}dy\,,&&\xi_1>-u\,,
\\
|g_{u,\g}(x,\xi)|\le&\int_x^\infty\exp\left(-\left(\tfrac{\nu}{|\xi_1+u|}-\g\right)|x-y|\right)\frac{|\cK^pg_{u,\g}+\tilde Q|(y,\xi)}{|\xi_1+u|}dy\,,&&\xi_1<-u\,.
\ea
$$
Hence
\be\lb{Ineq|g|}
|g_{u,\g}(\cdot,\xi)|\le|g_b(\xi)|+G\star|\cK^pg_{u,\g}|(\cdot,\xi)+G\star|\tilde Q|(\cdot,\xi)\,,
\ee
where the function $G$ has been defined in \eqref{DefG}. 

\begin{Lem}\lb{L-BoundG}
One has
$$
\ba
\|G(\cdot,\xi)\star\phi(\cdot,\xi)\|_{L^\infty(\bR_+)}\le&\frac{\|\phi(\cdot,\xi)\|_{L^\infty(\bR_+)}}{\nu(\xi)-\g|\xi_1+u|}\,,
\\
\|\indc_{|\xi_1+u|\ge 1}G\star\phi(\cdot,\xi)\|_{L^\infty(\bR_+)}\le&\frac{\|\phi(\cdot,\xi)\|_{L^2(\bR_+)}}{\sqrt{2\nu(\xi)-2\g|\xi_1+u|}}\,.
\ea
$$
Moreover, for each $\eps>0$, one has
$$
\ba
\|\indc_{|\xi_1+u|<1}G\star\phi(\cdot,\xi)\|_{L^\infty(\bR_+)}\le&\frac{\eps^{1/4}\indc_{|\xi_1+u|<1}\|\phi(\cdot,\xi)\|_{L^\infty(\bR_+)}}{(4/3)^{3/4}|\xi_1+u|^{1/4}(\nu(\xi)-\g|\xi_1+u|)^{3/4}}
\\
&+\frac1{2\sqrt{e\eps}}\frac{\indc_{|\xi_1+u|<1}\|\phi(\cdot,\xi)\|_{L^2(\bR_+)}}{\sqrt{2}(\nu(\xi)-\g|\xi_1+u|)}\,.
\ea
$$
\end{Lem}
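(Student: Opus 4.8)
All three estimates will follow from Young's convolution inequality combined with the explicit $L^p$-norms of $G(\cdot,\xi)$ already recorded in \eqref{GLp}; only the third one needs, in addition, a splitting of the kernel near the origin and a short one-variable optimization. Throughout, $\nu(\xi)-\g|\xi_1+u|>0$ and $\nu(\xi)/|\xi_1+u|-\g>0$ by \eqref{CollFreqIneq} together with $\g<\nu_-$, so every denominator below is positive. For the first estimate I would invoke $\|G(\cdot,\xi)\star\phi(\cdot,\xi)\|_{L^\infty(\bR_+)}\le\|G(\cdot,\xi)\|_{L^1}\|\phi(\cdot,\xi)\|_{L^\infty(\bR_+)}$ with the value $\|G(\cdot,\xi)\|_{L^1}=(\nu(\xi)-\g|\xi_1+u|)^{-1}$ from \eqref{GLp} at $p=1$. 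For the second, I would instead pair $L^2$ with $L^2$, using $\|G(\cdot,\xi)\star\phi(\cdot,\xi)\|_{L^\infty(\bR_+)}\le\|G(\cdot,\xi)\|_{L^2}\|\phi(\cdot,\xi)\|_{L^2(\bR_+)}$ and $\|G(\cdot,\xi)\|_{L^2}=\big(2|\xi_1+u|(\nu(\xi)-\g|\xi_1+u|)\big)^{-1/2}$ from \eqref{GLp} at $p=2$; on $\{|\xi_1+u|\ge1\}$ the factor $|\xi_1+u|$ is $\ge1$ and may be dropped, which is the asserted bound.

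For the third estimate I would fix $\eps>0$ and split the kernel at distance $\eps$ from the origin, $G=G\indc_{|z|<\eps}+G\indc_{|z|\ge\eps}$, so that $G\star\phi=(G\indc_{|z|<\eps})\star\phi+(G\indc_{|z|\ge\eps})\star\phi$ and the two pieces are handled by different pairings. For the near piece I pair the $L^\infty$-norm of $\phi$ with the $L^1$-norm of $G\indc_{|z|<\eps}$, bounding the latter by Hölder's inequality with exponents $\tfrac43$ and $4$ on the half-line on which $G$ is supported: $\|G\indc_{|z|<\eps}\|_{L^1}\le\|G\|_{L^{4/3}}\,\eps^{1/4}$, and then \eqref{GLp} at $p=\tfrac43$ identifies $\eps^{1/4}\|G(\cdot,\xi)\|_{L^{4/3}}$ with exactly the coefficient of $\|\phi(\cdot,\xi)\|_{L^\infty(\bR_+)}$ in the statement. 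For the far piece I pair $L^2$ with $L^2$: $\|(G\indc_{|z|\ge\eps})\star\phi\|_{L^\infty(\bR_+)}\le\|G\indc_{|z|\ge\eps}\|_{L^2}\|\phi\|_{L^2(\bR_+)}$, and an explicit integration gives
$$
\|G(\cdot,\xi)\indc_{|z|\ge\eps}\|_{L^2}=\frac{\sqrt{a}\,e^{-a\eps}}{\sqrt{2}\,\big(\nu(\xi)-\g|\xi_1+u|\big)}\,,\qquad a:=\frac{\nu(\xi)}{|\xi_1+u|}-\g>0\,.
$$
The bound for the far piece then reduces to the one-variable inequality $\sqrt{a}\,e^{-a\eps}\le(2e\eps)^{-1/2}$, the right-hand side being the maximum of $a\mapsto\sqrt{a}\,e^{-a\eps}$ over $a>0$ (attained at $a=1/(2\eps)$); this produces the constant $\tfrac1{2\sqrt{e\eps}}$. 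Adding the near and far pieces yields the third inequality.

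I expect the only genuine obstacle to be this last optimization in the far piece: one must realize that the splitting radius has to be taken equal to the $\eps$ appearing in the statement --- so that the factor $\eps^{1/4}$ comes from the near term and a decaying exponential $e^{-a\eps}$ from the far term --- and then read off the constant from the elementary maximization of $\sqrt{a}\,e^{-a\eps}$. Everything else is routine bookkeeping with \eqref{GLp} and Young's inequality.
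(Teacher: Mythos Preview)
Your proposal is correct and follows essentially the same route as the paper: Young's inequality with \eqref{GLp} at $p=1$ and $p=2$ for the first two estimates, and for the third the same splitting $G=G\indc_{|z|<\eps}+G\indc_{|z|\ge\eps}$, the same H\"older pairing $L^{4/3}$--$L^4$ on the near piece, and the same maximization of $\sqrt{a}\,e^{-a\eps}$ on the far piece. There is no meaningful difference between your argument and the paper's.
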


\begin{proof}
The two first inequalities follow from Young's convolution inequality and the computation of the $L^p$ norms of $G$ in \eqref{GLp} with $p=1$ and $p=2$.

For each $\eps>0$, write
$$
G\star h(\cdot,\xi)=G_{1,\eps}\star h(\cdot,\xi)+G_{2,\eps}\star h(\cdot,\xi)
$$
were
$$
G_{1,\eps}(z,\xi)=G(z,\xi)\indc_{|z|<\eps}\text{ and }G_{2,\eps}(z,\xi)=G(z,\xi)\indc_{|z|\ge\eps}.
$$
Then
$$
\ba
\|G_{1,\eps}\star\phi(\cdot,\xi)\|_{L^\infty(\bR_+)}\le&\|\indc_{[0,\eps]}G(\cdot,\xi)\|_{L^1(\bR_+)}\|\phi(\cdot,\xi)\|_{L^\infty(\bR_+)}
\\
\le&\|\indc_{[0,\eps]}\|_{L^4(\bR_+)}\|G(\cdot,\xi)\|_{L^{4/3}(\bR_+)}\|\phi(\cdot,\xi)\|_{L^\infty(\bR_+)}
\\
=&\frac{\eps^{1/4}\|\phi(\cdot,\xi)\|_{L^\infty(\bR_+)}}{(4/3)^{3/4}|\xi_1+u|^{1/4}(\nu-\g|\xi_1+u|)^{3/4}}\,,
\ea
$$
while
$$
\ba
\|G_{2,\eps}\star\phi(\cdot,\xi)\|_{L^\infty(\bR_+)}\le&\|\indc_{[\eps,\infty)}G(\cdot,\xi)\|_{L^2(\bR_+)}\|\phi(\cdot,\xi)\|_{L^2(\bR_+)}
\\
\le&\frac{\exp\left(-\left(\tfrac{\nu}{|\xi_1+u|}-\g\right)\eps\right)\|\phi(\cdot,\xi)\|_{L^2(\bR_+)}}{\sqrt{2}|\xi_1+u|^{1/2}(\nu-\g|\xi_1+u|)^{1/2}}
\\
\le&\sqrt{\tfrac{\nu}{|\xi_1+u|}-\g}\exp\left(-\left(\tfrac{\nu}{|\xi_1+u|}-\g\right)\eps\right)\frac{\|\phi(\cdot,\xi)\|_{L^2(\bR_+)}}{\sqrt{2}(\nu-\g|\xi_1+u|)}
\\
\le&\frac1{\sqrt{2e\eps}}\frac{\|\phi(\cdot,\xi)\|_{L^2(\bR_+)}}{\sqrt{2}(\nu-\g|\xi_1+u|)}=\frac{\|\phi(\cdot,\xi)\|_{L^2(\bR_+)}}{2\sqrt{e\eps}(\nu-\g|\xi_1+u|)}\,.
\ea
$$
\end{proof}

\smallskip
Therefore, we deduce from \eqref{Ineq|g|} and Lemma \ref{L-BoundG} that
$$
\ba
\|g_{u,\g}(\cdot,\xi)\|_{L^\infty(\bR_+)}\le&|g_b(\xi)|+\frac{\|\tilde Q(\cdot,\xi)\|_{L^\infty(\bR_+)}}{\nu(\xi)-\g|\xi_1+u|}
\\
&+\frac{\|\cK^pg_{u,\g}(\cdot,\xi)\|_{L^2(\bR_+)}}{\sqrt{2\nu(\xi)-2\g|\xi_1+u|}}+\frac1{\sqrt{2e\eps}}\frac{\|\cK^pg_{u,\g}(\cdot,\xi)\|_{L^2(\bR_+)}}{\sqrt{2}(\nu(\xi)-\g|\xi_1+u|)}
\\
&+\frac{\eps^{1/4}\indc_{|\xi_1+u|\le 1}\|\cK^pg_{u,\g}(\cdot,\xi)\|_{L^\infty(\bR_+)}}{(4/3)^{3/4}|\xi_1+u|^{1/4}(\nu(\xi)-\g|\xi_1+u|)^{3/4}}\,.
\ea
$$

Denote by $k_j$ for $j=1,2,3$ the integral kernel of the operator $\cK_j$ in Lemma \ref{L-PropK}, and set 
$$
\ba
\tilde k(\xi,\zeta)=&k_1(\xi,\zeta)+k_2(\xi,\zeta)+k_3(\xi,\zeta)\ge 0\,,
\\
\tilde k^p(\xi,\zeta)=&\tilde k(\xi,\zeta)+\a|\zeta_1+u||X_+(\xi)||X_+(\zeta)|M(\zeta)+\b|\zeta_1+u||\phi_u(\xi)||\psi_u(\zeta)|M(\zeta)\,.
\ea
$$
Denote by $\tilde\cK^p$ the integral operator with kernel $\tilde k^p$:
$$
\tilde\cK^p\phi(\xi)=\int_{\bR^3}\tilde k^p(\xi,\zeta)\phi(\zeta)d\zeta\,,
$$
and set
$$
\cG_{u,\g}(\xi):=\|g(\cdot,\xi)\|_{L^\infty(\bR_+)}\,.
$$
Then, for each $\xi\in\bR^3$, one has
$$
|\cK^pg_{u,\g}(x,\xi)|\le\int_{\bR^3}\tilde k^p(\xi,\zeta)\|g_{u,\g}(\cdot,\zeta)\|_{L^\infty}d\zeta\quad\text{ for a.e. }x\ge 0\,,
$$
so that
$$
\|\cK^pg_{u,\g}(\cdot,\xi)\|_{L^\infty}\le\int_{\bR^3}|\tilde k^p(\xi,\zeta)|\|g_{u,\g}(\cdot,\zeta)\|_{L^\infty}d\zeta=\tilde\cK^p\cG_{u,\g}(\xi)\,.
$$
Hence, Lemma \ref{L-BoundG} and \eqref{Ineq|g|} imply that
\be\lb{IneqG}
\ba
\cG_{u,\g}(\xi)\le&|g_b(\xi)|+\frac{\|\tilde Q(\cdot,\xi)\|_{L^\infty}}{\nu(\xi)-\g|\xi_1+u|}
\\
&+\frac{\|\cK^pg_{u,\g}(\cdot,\xi)\|_{L^2(\bR_+)}}{\sqrt{2\nu(\xi)-2\g|\xi_1+u|}}+\frac1{\sqrt{2e\eps}}\frac{\|\cK^pg_{u,\g}(\cdot,\xi)\|_{L^2(\bR_+)}}{\sqrt{2}(\nu(\xi)-\g|\xi_1+u|)}
\\
&+\frac{\eps^{1/4}\indc_{|\xi_1+u|\le 1}\tilde\cK^p\cG_{u,\g}(\xi)}{(4/3)^{3/4}|\xi_1+u|^{1/4}(\nu(\xi)-\g|\xi_1+u|)^{3/4}}\,.
\ea
\ee

At this point, we use the following lemma.

\begin{Lem}\lb{L-ContKp}
For all $\a,\b\in\bR$ there exists $K_*[\a,\b]>0$, and for each $s\ge 0$, there exists $K_s\equiv K_s[\a,\b]>0$ such that
$$
\sup_{|u|\le\min(1,r/2)}\|\tilde\cK^p\|_{B(\fH)}\le K_*[\a,\b]\,,
$$
together with
$$
\sup_{|u|\le\min(1,r/2)}\|M^{1/2}\tilde\cK^p\|_{B(\fH,L^{\infty,1/2}(\bR^3))}\le K_{1/2}[\a,\b]\,,
$$
and
$$
\sup_{|u|\le\min(1,r/2)}\|M^{1/2}\tilde\cK^pM^{-1/2}\|_{B(L^{\infty,s}(\bR^3),L^{\infty,s+1}(\bR^3))}\le K_{s+1}[\a,\b]\,.
$$
\end{Lem}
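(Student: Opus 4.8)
The plan is to split $\tilde\cK^p=\tilde\cK+\a\cR_++\b\cR_u$, where $\tilde\cK$ is the ($u$-independent) integral operator with kernel $\tilde k=k_1+k_2+k_3$ and $\cR_+,\cR_u$ are the rank-one operators
$$
\cR_+\phi:=|X_+|\la|\xi_1+u||X_+|\phi\ra\,,\qquad\cR_u\phi:=|\phi_u|\la|\xi_1+u||\psi_u|\phi\ra\,,
$$
and to bound the three pieces separately, defining $K_*[\a,\b]$, $K_{1/2}[\a,\b]$ and $K_{s+1}[\a,\b]$ as the corresponding sums. Since $k_1,k_2,k_3\ge 0$, one has $\tilde\cK=\cK_1+\cK_2+\cK_3$, so the three asserted bounds for $\tilde\cK$ — with $u$-independent constants — follow at once from Lemma \ref{L-PropK} applied to each $\cK_j$ and summed (using that $\phi\in\fH$ iff $\sqrt M\phi\in L^2(\bR^3;d\xi)$, with equal norms). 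For the rank-one parts one computes, for $s\ge 0$,
$$
(M^{1/2}\cR_uM^{-1/2}\psi)(\xi)=\sqrt{M(\xi)}\,|\phi_u(\xi)|\int_{\bR^3}|\zeta_1+u|\,|\psi_u(\zeta)|\,\sqrt{M(\zeta)}\,\psi(\zeta)\,d\zeta\,,
$$
and for the $\fH\to\fH$ and $\fH\to L^{\infty,1/2}$ bounds one keeps $M^{-1/2}\psi$ inside the integral and estimates $\phi\in\fH$ by Cauchy--Schwarz; the formulas for $\cR_+$ are obtained by replacing $(\phi_u,\psi_u)$ by $(X_+,X_+)$. In each case the bound factors through an ``output'' weighted-$L^\infty$ norm of $\sqrt M|X_+|$ (resp. $\sqrt M|\phi_u|$), times a ``functional'' weighted integral of $|\xi_1+u||X_+|$ (resp. $|\xi_1+u||\psi_u|$) against $M\,d\xi$ or $\sqrt M\,d\xi$; here one uses $|\xi_1+u|\le 1+|\xi|$ for $|u|\le 1$ together with \eqref{CollFreqIneq} to trade powers of $|\xi_1+u|$ for powers of $\nu$.

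The $X_+$-factors are $u$-independent and finite because $X_+$ is a polynomial and $\sqrt M$ is Gaussian. The $\phi_u$-factors are controlled by Proposition \ref{P-GEP}, which gives $\sup_{|u|<r}\|(1+|\xi|)^\sigma\sqrt M\phi_u\|_{L^\infty}<\infty$ for every $\sigma\ge 0$. For the $\fH\to\fH$ and $\fH\to L^{\infty,1/2}$ bounds on $\cR_u$ one needs only $\sup_{|u|\le\min(1,r/2)}\|\nu\psi_u\|_{\fH}<\infty$, which is immediate since $u\mapsto\psi_u$ is real-analytic on $(-r,r)$ with values in $\fH\cap\Dom\cL$. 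I expect the only genuine obstacle to be the $L^{\infty,s}\to L^{\infty,s+1}$ bound on $\cR_u$: it requires $\int_{\bR^3}|\zeta_1+u||\psi_u(\zeta)|\sqrt{M(\zeta)}\,d\zeta$ finite uniformly in $u$, hence a uniform \emph{weighted-}$L^\infty$ bound $\sup_{|u|\le\min(1,r/2)}\|(1+|\xi|)^\sigma\sqrt M\psi_u\|_{L^\infty}<\infty$ for $\sigma$ large, which is not contained in the real-analyticity statement.

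I would obtain this last bound by the same bootstrap used for $\phi_u$ in the proof of Proposition \ref{P-GEP}. Subtracting $\cL\phi_0=0$ from $\cL\phi_u=\tau_u(\xi_1+u)\phi_u$ and dividing by $u$ gives $\cL\psi_u=\tfrac{\tau_u}{u}(\xi_1+u)\phi_u$, whose right-hand side is bounded in every $\sqrt M$-weighted $L^\infty$ space, uniformly in $|u|\le\min(1,r/2)$, by Proposition \ref{P-GEP} and the boundedness of $\tau_u/u$ near $0$. Writing $\psi_u=\nu^{-1}\big(\tfrac{\tau_u}{u}(\xi_1+u)\phi_u+\cK\psi_u\big)$ and iterating Lemma \ref{L-PropK} — starting from the uniform $\fH$-bound on $\psi_u$ and using $\nu^{-1}\le\nu_-^{-1}(1+|\xi|)^{-1}$ to gain decay at each step — one bootstraps $\sqrt M\psi_u$ into $L^{\infty,\sigma}$ for every $\sigma\ge 0$, uniformly in $|u|\le\min(1,r/2)$. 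Collecting the constants from the two preceding paragraphs then yields the three estimates of the lemma.
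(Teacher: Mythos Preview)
Your proof is correct and follows the same strategy as the paper: decompose $\tilde\cK^p=\tilde\cK+\a\cR_++\b\cR_u$, invoke Lemma~\ref{L-PropK} for $\tilde\cK$, and bound the rank-one remainders using Proposition~\ref{P-GEP} for $\phi_u$ together with the real-analyticity of $u\mapsto\psi_u$ in $\Dom\cL$. The paper's own displayed estimate controls the rank-one part by $\|\nu\psi_u\|_\fH\,\|f\|_\fH$ via Cauchy--Schwarz, which strictly speaking yields the third bound only when $s>3/2$ (so that $M^{-1/2}L^{\infty,s}\hookrightarrow\fH$); your extra bootstrap of $\sqrt M\,\psi_u$ into every $L^{\infty,\sigma}$, obtained from $\cL\psi_u=(\tau_u/u)(\xi_1+u)\phi_u$ by iterating Proposition~\ref{P-PropK} exactly as was done for $\phi_u$ in Proposition~\ref{P-GEP}, is what is needed to cover all $s\ge 0$ and makes your argument more complete than the paper's sketch.
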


\begin{proof}
Since
$$
\tilde\cK f-\tilde\cK^pf=\a\la|\xi_1+u||X_+|f\ra|X_+|+\b\la|\xi_1+u||\psi_u|f\ra|\phi_u|\,,
$$
one has
$$
\ba
\sup_{|u|\le\min(1,r/2)}&\|(\tilde\cK f-\tilde\cK^pf)\|_{\fH}
\\
&\le\tfrac{\a}{\nu_-}\|\nu X_+\|_\fH\|f\|_\fH+\tfrac{\b}{\nu_-}\sup_{|u|\le\min(1,r/2)}\|\nu\psi_u\|_\fH\|\phi_u\|_{\fH}\|f\|_\fH\,,
\ea
$$
and
$$
\ba
\sup_{|u|\le\min(1,r/2)}&\|(1+|\xi|)^s\sqrt{M}(\tilde\cK f-\tilde\cK^pf)(\xi)\|_{L^\infty(\bR^3)}
\\
&\le\tfrac{\a}{\nu_-}\|\nu X_+\|_\fH\|(1+|\xi|)^s\sqrt{M}X_+\|_{L^\infty(\bR^3)}\|f\|_\fH
\\
&+\tfrac{\b}{\nu_-}\sup_{|u|\le\min(1,r/2)}\|\nu\psi_u\|_\fH\|(1+|\xi|)^s\sqrt{M}\phi_u\|_{L^\infty(\bR^3)}\|f\|_\fH\,.
\ea
$$
Observe that
$$
\sup_{|u|\le\min(1,r/2)}\|\nu\psi_u\|_\fH<\infty\,,
$$
since the map $u\mapsto\psi_u$ is real-analytic on $(-r,r)$ with values in $\Dom\cL$. One concludes with Propositions \ref{P-PropK} and \ref{P-GEP}, using especially the bound
$$
\sup_{|u|\le\min(1,r/2)}\\|(1+|\xi|)^s\sqrt{M}\phi_u\|_{L^\infty(\bR^3)}\le C_s<\infty
$$
established in Proposition \ref{P-PropK}.
\end{proof}

Hence
$$
\ba
\left\|\frac{\indc_{|\xi_1+u|\le 1}\tilde\cK^p\cG_{u,\g}}{|\xi_1+u|^{1/4}(1+|\xi|)^{3/4}}\right\|^2_{\fH}\le&\int_{|\xi_1+u|\le 1}\frac{\|(1+|\zeta|)^{1/2}M^{1/2}\tilde\cK^p\cG_{u,\g}\|^2_{L^\infty(\bR^3)}d\xi}{|\xi_1+u|^{1/2}(1+|\xi|)^{5/2}}
\\
\le&J^2K_{1/2}[\a,\b]^2\|\cG_{u,\g}\|^2_{\fH}\,,
\ea
$$
with
$$
J:=\left(\int_0^1\frac{2d\zeta_1}{\sqrt{\zeta_1}}\int_{\bR^2}\frac{d\xi'}{(1+|\xi'|)^{5/2}}\right)^{1/2}<\infty\,.
$$

Henceforth, it will be convenient to use the notation
\be\lb{DefIds}
\cI_{d,s}:=\int_{\bR^d}\frac{d\zeta}{(1+|\zeta|)^s}\,.
\ee
Thus
$$
J^2=4I_{2,5/2}\,.
$$

Thus, Lemma \ref{L-ContKp} and \eqref{IneqG} imply that
$$
\ba
\|\cG_{u,\g}\|_{\fH}\le&\|g_b\|_{\fH}+\frac{\|\tilde Q\|_{L^2(Md\xi;L^\infty(\bR_+))}}{\nu_--\g}
\\
&+\frac{\|\cK^p\|_{B(\fH)}\|g_{u,\g}\|_{\cH}}{\sqrt{2(\nu_--\g)}}+\frac1{\sqrt{2e\eps}}\frac{\|\cK^p\|_{B(\fH)}\|g_{u,\g}\|_{\cH}}{\sqrt{2}(\nu_--\g)}
\\
&+\frac{\eps^{1/4}JK_{1/2}[\a,\b]}{(4/3)^{3/4}(\nu_--\g)^{3/4}}\|\cG_{u,\g}\|_{\fH}\,.
\ea
$$
Choosing
$$
\frac1{2\eps^{1/4}}=\frac{JK_{1/2}[\a,\b]}{(4/3)^{3/4}(\nu_--\g)^{3/4}}
$$
leads to the inequality
\be\lb{IneqNormg}
\ba
\tfrac12\|g_{u,\g}\|_{L^2(Md\xi;L^\infty(\bR_+))}\le&\|g_b\|_{L^2(Md\xi)}+\frac{\|\tilde Q\|_{L^2(Md\xi;L^\infty(\bR_+))}}{\nu_--\g}
\\
&+\frac{K_*[\a,\b]}{\sqrt{2(\nu_--\g)}}\left(1+\tfrac{3\sqrt{3}}{\sqrt{2e}}\frac{\cI_{2,5/2}K_{1/2}[\a,\b]^2}{(\nu_--\g)^2}\right)\|g_{u,\g}\|_{\cH}\,.
\ea
\ee

\subsubsection{From $L^2(Md\xi;L^\infty(\bR_+))$ to $L^\infty(\bR_+\times\bR^3)$}

We next return to the inequality \eqref{Ineq|g|}.  Obviously
$$
\|\cK^pg_{u,\g}(\cdot,\xi)\|_{L^\infty(\bR^3)}\le\tilde\cK^p\cG_{u,\g}(\xi)\,,\qquad\xi\in\bR^3\,.
$$
Then, the first inequality in Lemma \ref{L-BoundG} implies that
$$
\cG_{u,\g}(\xi)\le|g_b(\xi)|+\frac{\tilde\cK^p\cG_{u,\g}(\xi)}{\nu(\xi)-\g|\xi_1+u|}+\frac{\|\tilde Q(\cdot,\xi)\|_{L^\infty(\bR^3)}}{\nu(\xi)-\g|\xi_1+u|}\,.
$$
By the second inequality in Lemma \ref{L-ContKp}, one has
\be\lb{fH-Linfty}
\ba
\|(1+|\xi|)^{3/2}\sqrt{M}\cG_{u,\g}\|_{L^\infty(\bR^3)}\le&\|(1+|\xi|)^{3/2}\sqrt{M}g_b\|_{L^\infty(\bR^3)}
\\
&+\frac{\|(1+|\xi|)^{1/2}\sqrt{M}\tilde Q\|_{L^\infty(\bR_+\times\bR^3)}}{\nu_--\g}
\\
&+\frac{\|(1+|\xi|)^{1/2}\sqrt{M}\tilde\cK^p\cG_{u,\g}\|_{L^\infty(\bR^3)}}{\nu_--\g}
\\
\le&\|(1+|\xi|)^{3/2}\sqrt{M}g_b\|_{L^\infty(\bR^3)}
\\
&+\frac{\|(1+|\xi|)^{1/2}\sqrt{M}\tilde Q\|_{L^\infty(\bR_+\times\bR^3)}}{\nu_--\g}
\\
&+\frac{K_{1/2}[\a,\b]\|\cG_{u,\g}\|_{\fH}}{\nu_--\g}\,.
\ea
\ee
On the other hand, the third inequality in Lemma \ref{L-ContKp} implies that
\be\lb{s-s+1}
\ba
\|(1+|\xi|)^s\sqrt{M}\cG_{u,\g}\|_{L^\infty(\bR^3)}\le&\|(1+|\xi|)^s\sqrt{M}g_b\|_{L^\infty(\bR^3)}
\\
&+\frac{\|(1+|\xi|)^{s-1}\sqrt{M}\tilde Q\|_{L^\infty(\bR_+\times\bR^3)}}{\nu_--\g}
\\
&+\frac{\|(1+|\xi|)^{s-1}\sqrt{M}\tilde\cK^p\cG_{u,\g}\|_{L^\infty(\bR^3)}}{\nu_--\g}
\\
\le&\|(1+|\xi|)^s\sqrt{M}g_b\|_{L^\infty(\bR^3)}
\\
&+\frac{\|(1+|\xi|)^{s-1}\sqrt{M}\tilde Q\|_{L^\infty(\bR_+\times\bR^3)}}{\nu_--\g}
\\
&+\frac{K_{s-1}[\a,\b]\|(1+|\xi|)^{s-2}\sqrt{M}\cG_{u,\g}\|_{L^\infty(\bR^3)}}{\nu_--\g}
\ea
\ee
for each $s\ge 1$. 

Applying this inequality with $s=3$ and the previous inequality leads to the following statement, which summarizes our treatment of the penalized, linearized half-space problem. From the technical point of view, the proposition below is
the core of our analysis.

\begin{Prop}\lb{P-L8LinPen}
Let $R>0$ be given by \eqref{Cond1}, and $\a=\b=2\g$ with 
$$
0<\g\le\min(\Gamma,\tfrac12\nu_-)
$$
and $\Gamma$ defined by \eqref{DefGa}. Let $Q\in\cH$ and $g_b\in\fH$ satisfy
$$
(1+|\xi|)^3\sqrt{M}g_b\in L^{\infty}(\bR^3)\,,\quad\text{ and }\,\,\,e^{(\g+\de)x}(1+|\xi|)^2\sqrt{M}Q\in L^{\infty}(\bR_+\times\bR^3)\,,
$$
and
$$
Q(x,\cdot)\perp\Ker\cL\qquad\text{ for a.e. }x\ge 0\,.
$$
Then the solution $g_{u,\g}$ of (\ref{HalfSpLinPen}) (whose existence and uniqueness is established in Proposition \ref{P-ExUnL2}) satisfies the estimate
$$
\ba
\|(1+|\xi|)^3\sqrt{M}g_{u,\g}\|_{L^\infty(\bR_+\times\bR^3)}
\\
\le L\left(\|(1+|\xi|)^3\sqrt{M}g_b\|_{L^\infty(\bR^3)}+\|e^{(\g+\de)x}(1+|\xi|)^2\sqrt{M}Q\|_{L^\infty(\bR_+\times\bR^3)}\right)
\ea
$$
uniformly in $|u|\le R$, for some constant
$$
L\equiv L[\g,\nu_\pm,\de,R,K_*[2\g,2\g],K_{1/2}[2\g,2\g],K_2[2\g,2\g]]>0\,.
$$
\end{Prop}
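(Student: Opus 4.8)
The genuinely substantive work of this section — the weighted coercivity estimate of Proposition \ref{P-CoercL}, the $\cH$-invertibility of Lemma \ref{L-InvT}, and the two smoothing passages \eqref{IneqNormg} and \eqref{s-s+1} deduced from the mild formulation together with the bounds on $\tilde\cK^p$ in Lemma \ref{L-ContKp} — has already been carried out. The proof of Proposition \ref{P-L8LinPen} is therefore the \emph{assembly step}: one chains the estimates $\cH \rightsquigarrow \fH \rightsquigarrow L^\infty_{3/2} \rightsquigarrow L^\infty_3$ and, at each stage, bounds the data terms that appear in terms of the two quantities on the right-hand side of the claimed inequality. Throughout, we use $\nu_-(1+|\xi|)\le\nu(\xi)\le\nu_+(1+|\xi|)$, the pointwise bound $|Q(x,\xi)|\le e^{-(\g+\de)x}(1+|\xi|)^{-2}M(\xi)^{-1/2}\|e^{(\g+\de)x}(1+|\xi|)^2\sqrt{M}Q\|_{L^\infty(\bR_+\times\bR^3)}$ (and the analogous bound for $g_b$), the convergence of the velocity integrals $\cI_{3,k}$ of \eqref{DefIds} for $k>3$, and the uniform-in-$u$ bounds $\sup_{|u|\le R}\|(1+|\xi|)^s\sqrt M\phi_u\|_{L^\infty}\le C_s$ and $\sup_{|u|\le R}(\|\phi_u\|_\fH+\|\nu\psi_u\|_\fH)<\infty$ from Proposition \ref{P-GEP}.

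\textbf{Step 1 ($\cH$-bound).} First I would estimate $\|g_{u,\g}\|_\cH\le\nu_-^{-1}\|\nu g_{u,\g}\|_\cH$ using Proposition \ref{P-ExUnL2} and inequality \eqref{cH}. The right-hand side of \eqref{cH} involves $\|e^{\g x}Q\|_\cH$ and $\|\nu g_b\|_\fH$; writing $e^{\g x}=e^{-\de x}e^{(\g+\de)x}$ and using the pointwise bound above together with $\int_0^\infty e^{-2\de x}dx=\tfrac1{2\de}$ and the finiteness of $\cI_{3,4}$, one gets $\|e^{\g x}Q\|_\cH\lesssim\|e^{(\g+\de)x}(1+|\xi|)^2\sqrt M Q\|_{L^\infty}$, while $\|\nu g_b\|_\fH\lesssim\|(1+|\xi|)^3\sqrt M g_b\|_{L^\infty}$. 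Hence $\|g_{u,\g}\|_\cH$ is bounded, uniformly in $|u|\le R$, by a constant times $\|(1+|\xi|)^3\sqrt M g_b\|_{L^\infty}+\|e^{(\g+\de)x}(1+|\xi|)^2\sqrt M Q\|_{L^\infty}$.

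\textbf{Step 2 ($\fH$-bound on $\cG_{u,\g}$).} Next I would feed this into \eqref{IneqNormg}, which bounds $\|\cG_{u,\g}\|_\fH=\|g_{u,\g}\|_{L^2(Md\xi;L^\infty(\bR_+))}$ by $\|g_b\|_\fH$, $\|\tilde Q\|_{L^2(Md\xi;L^\infty(\bR_+))}$ and $\|g_{u,\g}\|_\cH$. Here $\|g_b\|_\fH\le\cI_{3,6}^{1/2}\|(1+|\xi|)^3\sqrt M g_b\|_{L^\infty}$; and, since $\tilde Q=e^{\g x}(I-\bP_u)Q$ with $\bP_uQ=-\la\psi_uQ\ra(\xi_1+u)\phi_u$, the contribution of $e^{\g x}Q$ to $\|\tilde Q\|_{L^2(Md\xi;L^\infty(\bR_+))}$ is controlled as in Step 1, while the $\bP_u$-contribution is handled via $|e^{\g x}\la\psi_uQ\ra(x)|\le\|\psi_u\|_\fH\,e^{\g x}\|Q(x,\cdot)\|_\fH\lesssim\|\psi_u\|_\fH\|e^{(\g+\de)x}(1+|\xi|)^2\sqrt M Q\|_{L^\infty}$ together with $\|(\xi_1+u)\phi_u\|_\fH\le(1+R)\|(1+|\xi|)\phi_u\|_\fH$, uniformly bounded by Proposition \ref{P-GEP}. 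So $\|\cG_{u,\g}\|_\fH$ is again bounded by the same two data quantities.

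\textbf{Step 3 ($L^\infty_{3/2}$-bound) and Step 4 (bootstrap to $L^\infty_3$).} I would then apply \eqref{fH-Linfty} to bound $\|(1+|\xi|)^{3/2}\sqrt M\cG_{u,\g}\|_{L^\infty}$, using Step 2 for the $\|\cG_{u,\g}\|_\fH$-term, $\|(1+|\xi|)^{1/2}\sqrt M\tilde Q\|_{L^\infty}\le\|(1+|\xi|)^2\sqrt M\tilde Q\|_{L^\infty}$ for the source term — which is bounded, exactly as in Step 2, by $\|e^{(\g+\de)x}(1+|\xi|)^2\sqrt M Q\|_{L^\infty}$ plus the $\bP_u$-correction controlled by $\|(1+|\xi|)^3\sqrt M\phi_u\|_{L^\infty}\le C_3$ — and $\|(1+|\xi|)^{3/2}\sqrt M g_b\|_{L^\infty}\le\|(1+|\xi|)^3\sqrt M g_b\|_{L^\infty}$. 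Finally, I would apply \eqref{s-s+1} with $s=3$: the term $\|(1+|\xi|)^{s-2}\sqrt M\cG_{u,\g}\|_{L^\infty}=\|(1+|\xi|)\sqrt M\cG_{u,\g}\|_{L^\infty}\le\|(1+|\xi|)^{3/2}\sqrt M\cG_{u,\g}\|_{L^\infty}$ is now under control by Step 3, the coefficient $K_{s-1}=K_2[2\g,2\g]$ comes from Lemma \ref{L-ContKp}, the $\tilde Q$-term $\|(1+|\xi|)^2\sqrt M\tilde Q\|_{L^\infty}$ is as above, and the $g_b$-term is $\|(1+|\xi|)^3\sqrt M g_b\|_{L^\infty}$ outright. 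Collecting all contributions and recalling $\|g_{u,\g}(\cdot,\xi)\|_{L^\infty(\bR_+)}=\cG_{u,\g}(\xi)$ yields the stated inequality, with $L$ an explicit combination of $\g,\nu_\pm,\de,R$, the uniform bounds on $\|\phi_u\|_\fH,\|\psi_u\|_\fH$ and on $\|(1+|\xi|)^s\sqrt M\phi_u\|_{L^\infty}$ from Proposition \ref{P-GEP}, the constants $\cI_{3,4},\cI_{3,6}$, and $K_*[2\g,2\g],K_{1/2}[2\g,2\g],K_2[2\g,2\g]$.

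\textbf{Expected obstacle.} There is no genuinely hard step left: the difficulty was front-loaded into Proposition \ref{P-CoercL}, Lemma \ref{L-InvT} and the smoothing estimates \eqref{IneqNormg}, \eqref{s-s+1}. The only points requiring care are (a) using the margin $\de>0$ in the exponential weight to convert a pointwise-in-$x$ weighted $L^\infty$ bound on $Q$ into the $\cH$-norm and $L^2(Md\xi;L^\infty(\bR_+))$-norm demanded by Proposition \ref{P-ExUnL2} and \eqref{IneqNormg}, and (b) controlling the $\bP_u$-correction in $\tilde Q$ uniformly in $u$, for which the uniform bounds of Proposition \ref{P-GEP} on $\phi_u$ and $\psi_u$ are exactly what is needed. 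Verifying that all velocity integrals encountered are finite (true since the relevant exponents exceed $3$, after extracting the Gaussian weight) is routine.
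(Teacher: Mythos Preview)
Your proposal is correct and follows essentially the same approach as the paper: both proofs chain the already-established estimates \eqref{cH}, \eqref{IneqNormg}, \eqref{fH-Linfty}, \eqref{s-s+1} (in that logical order, though the paper writes the argument top-down and then substitutes), handle the $\bP_u$-correction in $\tilde Q$ via the uniform bounds on $\phi_u,\psi_u$ from Proposition \ref{P-GEP}, and use the $\de$-margin together with $\cI_{3,4}$ to convert the weighted $L^\infty$ hypotheses on $Q$ and $g_b$ into the $\cH$- and $\fH$-norms demanded by Proposition \ref{P-ExUnL2}. The only cosmetic difference is that the paper tracks the explicit constant $L$ in \eqref{DefL}, whereas you (correctly) observe that an explicit formula is not needed for the statement as written.
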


\begin{proof}
Recall that 
$$
\ba
\tilde Q(x,\xi)=&e^{\g x}(I-\bP_u)Q(x,\xi)
\\
=&e^{\g x}Q(x,\xi)+e^{\g x}\la Q(x,\cdot)\psi_u\ra(\xi_1+u)\phi_u(\xi)\,.
\ea
$$
Hence
$$
\ba
\|(1+|\xi|)^2\sqrt{M}\tilde Q\|_{L^\infty(\bR_+\times\bR^3)}\le\|e^{\g x}(1+|\xi|)^2\sqrt{M}Q\|_{L^\infty(\bR_+\times\bR^3)}
\\
+\|(1+|\xi|)\sqrt{M}\phi_u\|_{L^\infty(\bR^3)}\|e^{\g x}(1+|\xi|)^2\sqrt{M}Q\|_{L^\infty(\bR_+\times\bR^3)}\int\frac{|\psi_u(\xi)|M^{1/2}d\xi}{(1+|\xi|)^2}
\\
\le\|e^{\g x}(1+|\xi|)^2\sqrt{M}Q\|_{L^\infty(\bR_+\times\bR^3)}\left(1+C_1\cI_{3,4}^{1/2}\sup_{|u|\le R}\|\nu^{1/2}\psi_u\|_\fH\right)&\,.
\ea
$$

By \eqref{fH-Linfty} and \eqref{s-s+1}
$$
\ba
\|(1+|\xi|)^3\sqrt{M}g_{u,\g}\|_{L^\infty_{x,\xi}}
\\
\le\left(1+\tfrac{K_2[\a,\b]}{\nu_--\g}\right)\left(\|(1+|\xi|)^3\sqrt{M}g_b\|_{L^\infty_{\xi}}+\tfrac{\|(1+|\xi|)^2\sqrt{M}\tilde Q\|_{L^\infty_{x,\xi}}}{\nu_--\g}\right)
\\
+\tfrac{K_2[\a,\b]K_{1/2}[\a,\b]}{(\nu_--\g)^2}\|g_{u,\g}\|_{L^2(Md\xi;L^\infty_x)}&\,.
\ea
$$
With \eqref{IneqNormg}, this inequality becomes
$$
\ba
\|(1+|\xi|)^3\sqrt{M}g_{u,\g}\|_{L^\infty_{x,\xi}}
\\
\le\!\left(1\!+\!\tfrac{K_2[\a,\b]}{\nu_--\g}\!+\!\tfrac{2\cI_{3,4}^{1/2}K_2[\a,\b]K_{1/2}[\a,\b]}{(\nu_--\g)^2}\right)\!\left(\|(1\!+\!|\xi|)^3\sqrt{M}g_b\|_{L^\infty_{\xi}}\!+\!\tfrac{\|(1+|\xi|)^2\sqrt{M}\tilde Q\|_{L^\infty_{x,\xi}}}{\nu_--\g}\right)
\\
+\tfrac{\sqrt{2}K_2[\a,\b]K_{1/2}[\a,\b]K_*[\a,\b]}{(\nu_--\g)^{5/2}}\left(1+\tfrac{3\sqrt{3}}{\sqrt{2e}}\frac{\cI_{2,5/2}K_{1/2}[\a,\b]^2}{(\nu_--\g)^2}\right)\|g_{u,\g}\|_{\cH}&\,.
\ea
$$
Next we inject in the right hand side of this inequality the bound on $\|g_{u,\g}\|_{\cH}$ obtained in \eqref{cH}, together with the bound for $\tilde Q$ obtained above. Since we have chosen $\a=\b=2\g$, one finds that
$$
\ba
\|(1+|\xi|)^3\sqrt{M}g_{u,\g}\|_{L^\infty_{x,\xi}}
\le\left(1\!+\!\tfrac{K_2[2\g,2\g]}{\nu_--\g}\!+\!\tfrac{2\cI_{3,4}^{1/2}K_2[2\g,2\g]K_{1/2}[2\g,2\g]}{(\nu_--\g)^2}\right)
\\
\times\left(\|(1\!+\!|\xi|)^3\sqrt{M}g_b\|_{L^\infty_{\xi}}\!+\!\tfrac{1+C_1\cI_{3,4}^{1/2}\|\nu^{1/2}\psi_u\|_\fH}{\nu_--\g}\|e^{\g x}(1+|\xi|)^2\sqrt{M}Q\|_{L^\infty_{x,\xi}}\right)
\\
+\tfrac{\sqrt{2}K_2[2\g,2\g]K_{1/2}[2\g,2\g]K_*[2\g,2\g]}{(\nu_--\g)^{5/2}}\left(1+\tfrac{3\sqrt{3}}{\sqrt{2e}}\tfrac{\cI_{2,5/2}K_{1/2}[2\g,2\g]^2}{(\nu_--\g)^2}\right)
\\
\times\left(\tfrac{1+\sqrt{\la\psi_u^2\ra\la(\xi_1+u)^2\phi_u^2\ra}}{\ka\nu_-}\sqrt{\tfrac{\cI_{3,4}}{2\de}}\|e^{(\g+\de)x}(1+|\xi|)^2\sqrt{M}Q\|_{L^\infty_{x,\xi}}\right.
\\
\left.+
\left(\tfrac{\sqrt{2\g}}{\ka\nu^2_-}+\tfrac{\|\cL\|_{B(\Dom\cL,\fH)}}{\ka\nu^2_-\sqrt{2\g}}+\tfrac{2\g(\nu_-+\sqrt{\la\psi_u^2\ra\la\phi_u^2\ra})}{\ka\nu^2_-}\right)\nu_+\cI_{3,4}^{1/2}\|(1\!+\!|\xi|)^3\sqrt{M}g_b\|_{L^\infty_{\xi}}\right)&\,,
\ea
$$
where $\ka$ is given by \eqref{DefKa}. This implies the announced estimate with $L$ given by
\be\lb{DefL}
\ba
L:=\sup_{|u|\le R}\max\left(\left(1\!+\!\tfrac{K_2[2\g,2\g]}{\nu_--\g}+\tfrac{2\cI_{3,4}^{1/2}K_2[2\g,2\g]K_{1/2}[2\g,2\g]}{(\nu_--\g)^2}\right)\right.
\\
+\tfrac{\sqrt{2}K_2[2\g,2\g]K_{1/2}[2\g,2\g]K_*[2\g,2\g]}{(\nu_--\g)^{5/2}}\left(1+\tfrac{3\sqrt{3}}{\sqrt{2e}}\tfrac{\cI_{2,5/2}K_{1/2}[2\g,2\g]^2}{(\nu_--\g)^2}\right)
\\
\times\left(\tfrac{\sqrt{2\g}}{\ka\nu^2_-}+\tfrac{\|\cL\|_{B(\Dom\cL,\fH)}}{\ka\nu^2_-\sqrt{2\g}}+\tfrac{2\g(\nu_-+\sqrt{\la\psi_u^2\ra\la\phi_u^2\ra})}{\ka\nu^2_-}\right)\nu_+\cI_{3,4}^{1/2},
\\
\left(1\!+\!\tfrac{K_2[2\g,2\g]}{\nu_--\g}\!+\!\tfrac{2\cI_{3,4}^{1/2}K_2[2\g,2\g]K_{1/2}[2\g,2\g]}{(\nu_--\g)^2}\right)\tfrac{1+C_1\cI_{3,4}^{1/2}\|\nu^{1/2}\psi_u\|_\fH}{\nu_--\g}
\\
+\tfrac{\sqrt{2}K_2[2\g,2\g]K_{1/2}[2\g,2\g]\|K_*[2\g,2\g]}{(\nu_--\g)^{5/2}}\left(1+\tfrac{3\sqrt{3}}{\sqrt{2e}}\tfrac{\cI_{2,5/2}K_{1/2}[2\g,2\g]^2}{(\nu_--\g)^2}\right)
\\
\left.\times\tfrac{1+\sqrt{\la\psi_u^2\ra\la(\xi_1+u)^2\phi_u^2\ra}}{\ka\nu_-}\sqrt{\tfrac{\cI_{3,4}}{2\de}}\right)&\,.
\ea
\ee
\end{proof}


\section{Solving the Nonlinear Problem}


\subsection{The Penalized Nonlinear Problem}

Given a boundary data $f_b\equiv f_b(\xi)$ satisfying the condition
$$
\sqrt{M}f_b\in L^{\infty,3}(\bR^3)\,,\qquad f_b\circ R=f_b\,,
$$
consider the following penalized, nonlinear half-space problem
\be
\lb{HalfSpNonlinPen}
\left\{
\ba
{}&(\xi_1+u)\d_xg_{u,\g}+\cL^p g_{u,\g}=e^{-\g x}(I-P_u)\cQ(g_{u,\g}-h_{u,\g}\phi_u,g_{u,\g}-h_{u,\g}\phi_u)\,,
\\
&h_{u,\g}(x)=-e^{-\g x}\int_0^\infty e^{(\tau_u-2\g)z}\la\psi_u\cQ(g_{u,\g}-h_{u,\g}\phi_u,g_{u,\g}-h_{u,\g}\phi_u)\ra(x+z)dz\,,
\\
&g_{u,\g}(0,\xi)=f_b(\xi)+h_{u,\g}(0)\phi_u(\xi)\,,\quad\xi_1+u>0\,.
\ea
\right.
\ee
In this section, we seek to solve the problem \eqref{HalfSpNonlinPen} by a fixed point argument assuming that the boundary data $f_b$ is small in $L^{\infty,3}(\bR^3)$.

\begin{Prop}\lb{P-ExUnNLPen}
There exists $\eps>0$ defined in \eqref{DefEps} such that, for each boundary data $f_b\equiv f_b(\xi)$ satisfying
$$
f_b\circ\cR=f_b\quad\text{ and }\,\,\|(1+|\xi|)^3\sqrt{M}f_b\|_{L^\infty(\bR^3)}\le\eps
$$
(with $\cR$ defined in \eqref{DefR}), the problem \eqref{HalfSpNonlinPen} has a unique solution $(g_{u,\g},h_{u,\g})$ satisfying the symmetry
$$
g_{u,\g}(x,\cR\xi)=g_{u,\g}(x,\xi)\quad\text{ for a.e. }\,\,(x,\xi)\in\bR_+\times\bR^3\,,
$$
and the estimate
$$
\|(1+|\xi|)^3\sqrt{M}g_{u,\g}\|_{L^\infty(\bR_+\times\bR^3)}+\|h_{u,\g}\|_{L^\infty(\bR_+)}\le 2L\eps
$$
where $L$ is given by \eqref{DefL}.
\end{Prop}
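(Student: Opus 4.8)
The plan is to recast \eqref{HalfSpNonlinPen} as a fixed point problem and to solve it by the contraction mapping principle for $\eps$ small, after first eliminating the scalar unknown $h_{u,\g}$. Keep $\a=\b=2\g$ with $0<\g\le\min(\Gamma,\tfrac12\nu_-)$ as in Proposition \ref{P-L8LinPen}, fix an auxiliary $\de\in(0,\g)$, and work in the Banach space $\cX$ of functions $g$ on $\bR_+\times\bR^3$ with $g\circ\cR=g$ and $\|g\|_\cX:=\|(1+|\xi|)^3\sqrt M g\|_{L^\infty(\bR_+\times\bR^3)}<\infty$.

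The first step is, for each fixed $g\in\cX$, to solve for $h_{u,\g}$ the scalar equation appearing in the second line of \eqref{HalfSpNonlinPen}. With $f:=g-h\phi_u$, its right-hand side is a quadratic map of $h\in L^\infty(\bR_+)$ into itself; the defining integral converges because $\tau_u<2\g$ for $|u|\le R$, and its size is governed by the classical Grad bilinear estimate for the collision term,
$$
\|(1+|\xi|)^2\sqrt M\,\cQ(\varphi,\vartheta)\|_{L^\infty(\bR^3)}\le C_\cQ\,\|(1+|\xi|)^3\sqrt M\varphi\|_{L^\infty(\bR^3)}\,\|(1+|\xi|)^3\sqrt M\vartheta\|_{L^\infty(\bR^3)}\,,
$$
which expresses that $\cQ$ loses exactly one power of $(1+|\xi|)$; this is why the weight drops from $3$ to $2$. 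Combining this with the uniform bounds $\|(1+|\xi|)^s\sqrt M\phi_u\|_{L^\infty}\le C_s$ and $\sup_{|u|\le R}\|\nu^{1/2}\psi_u\|_\fH<\infty$ from Proposition \ref{P-GEP}, and with $\cQ(f_1,f_1)-\cQ(f_2,f_2)=\cQ(f_1+f_2,f_1-f_2)$, one sees that on a small ball $\{\|h\|_{L^\infty}\le\rho\}$ this map is a contraction, hence has a unique fixed point $h=h[g]$, with $\|h[g]\|_{L^\infty}\lesssim\|g\|_\cX^2$ and $\|h[g_1]-h[g_2]\|_{L^\infty}\lesssim\rho\,\|g_1-g_2\|_\cX$, all uniformly in $|u|\le R$.

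With $h_{u,\g}$ thus slaved to $g$, the system \eqref{HalfSpNonlinPen} reduces to the single fixed point equation $g=\Lambda_u[g]$, where $\Lambda_u[g]$ is the solution of the \emph{linear} penalized problem \eqref{HalfSpLinPen} with boundary data $g_b:=f_b+h[g](0)\phi_u$ and source $e^{\g x}(I-\bP_u)Q$ for $Q:=e^{-2\g x}\cQ(f,f)$, $f=g-h[g]\phi_u$. The hypotheses of Propositions \ref{P-ExUnL2} and \ref{P-L8LinPen} hold for this $(g_b,Q)$: one has $Q(x,\cdot)\perp\Ker\cL$ because $\cQ(f,f)\perp\Ker\cL$ by \eqref{CollInvB} and $(\xi_1+u)\phi_u\perp\Ker\cL$ by Lemma \ref{L-PtesPp}; $\nu g_b\in\fH$ and $e^{\g x}Q\in\cH$; and, crucially, $e^{(\g+\de)x}(1+|\xi|)^2\sqrt M Q=e^{(\de-\g)x}(1+|\xi|)^2\sqrt M\cQ(f,f)\in L^\infty(\bR_+\times\bR^3)$ with norm $\le C_\cQ\|f\|_\cX^2$, the factor $e^{-2\g x}$ defeating the growth $e^{(\g+\de)x}$ precisely because $\de<\g$. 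By Proposition \ref{P-L8LinPen}, $\|\Lambda_u[g]\|_\cX\le L\eps+O(\|g\|_\cX^2)$ uniformly in $|u|\le R$, the $g$-dependent terms entering only through $h[g]$ and $Q$ and hence being at least quadratic. Since $h[g](0)$ is quadratically small and $g\mapsto h[g]$ is Lipschitz with constant $\lesssim\rho$, on the closed ball $\{\|g\|_\cX\le 2L\eps\}$ the map $\Lambda_u$ is both a self-map and a contraction with constant $O(\eps)<1$, provided $\eps$ lies below the explicit threshold that defines \eqref{DefEps}. Banach's theorem then gives a unique fixed point $g_{u,\g}$ in that ball; setting $h_{u,\g}:=h[g_{u,\g}]$ one obtains a solution of \eqref{HalfSpNonlinPen} with $\|h_{u,\g}\|_{L^\infty}=O(\eps^2)$, so that $\|(1+|\xi|)^3\sqrt M g_{u,\g}\|_{L^\infty}+\|h_{u,\g}\|_{L^\infty}\le L\eps+O(\eps^2)\le 2L\eps$. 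Since $f_b,\phi_u,\psi_u$ are $\cR$-invariant, $\cQ$ and $\cL^p_u$ commute with $\cR$ by \eqref{InvO3L} and Lemma \ref{L-PropK} while $\cR$ fixes $\xi_1$, and $\Dom\cT_u$ is $\cR$-invariant, every step preserves the symmetry, so uniqueness forces $g_{u,\g}\circ\cR=g_{u,\g}$.

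The hard analysis has already been carried out: the $u$-uniform $L^\infty$ bound for the penalized linearized half-space problem is Proposition \ref{P-L8LinPen}, and the elimination of the slowly varying mode is built into the penalization operator $\cL^p_u$. Within the present proof the only genuinely delicate points are (i) the weighted bilinear estimate for $\cQ$ and the accompanying weight bookkeeping — the drop $3\to2$ under $\cQ$, and the $e^{-2\g x}$ that must beat $e^{(\g+\de)x}$, which is why one needs $\de<\g$ — and (ii) the coupling between $g_{u,\g}$ and $h_{u,\g}$ through the boundary value $h_{u,\g}(0)\phi_u$, which is not a small perturbation in a naive product norm; resolving $h_{u,\g}$ first in terms of $g_{u,\g}$ (equivalently, putting a large weight on the $h$-component of the norm) is what turns the $g$-equation into a bona fide contraction. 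Everything else is routine, consistent with the authors' statement that this section is mostly technical.
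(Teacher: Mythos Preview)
Your approach is correct and uses the same essential ingredients as the paper --- Grad's bilinear estimate for $\cQ$, Proposition \ref{P-L8LinPen}, and the contraction mapping theorem --- but organizes the fixed point differently. The paper works directly in the product space of pairs $(g,h)$ with norm $\|(g,h)\|_\cX=\|(1+|\xi|)^3\sqrt{M}g\|_{L^\infty}+\|h\|_{L^\infty}$ and defines a single map $\cS_{u,\g}:(g,h)\mapsto(\tilde g,\tilde h)$: first $\tilde h$ is read off from the integral formula with the \emph{old} $(g,h)$ in the collision term, and then $\tilde g$ solves the linear penalized problem with boundary data $f_b+\tilde h(0)\phi_u$. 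Because $\|\tilde h\|_{L^\infty}\le\Lambda'\|(g,h)\|_\cX^2$ is already quadratic, the coupling through $\tilde h(0)\phi_u$ contributes only $O(\|(g,h)\|_\cX^2)$ to $\|\tilde g\|$, so the plain product norm works without any nested fixed point or reweighting --- contrary to what you suggest in your point (ii). Your two-step elimination of $h$ via an inner contraction is valid but adds a layer the paper simply does not need. One minor correction: the paper takes $\de=\g$, not $\de<\g$; with $Q=e^{-2\g x}\cQ(f,f)$ one has $e^{(\g+\de)x}Q=\cQ(f,f)$ exactly, which is bounded, so the strict inequality you insist on is unnecessary.
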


\medskip
We first recall a classical result on the twisted collision integral $\cQ$.

\begin{Prop}\lb{P-BoundQ}
For each $s\ge 1$, there exists $Q_s>0$ such that
$$
\|(1+|\xi|)^{s-1}\sqrt{M}\cQ(f,g)\|_{L^{\infty}(\bR^3)}\le Q_s\|(1+|\xi|)^s\sqrt{M}f\|_{L^{\infty}(\bR^3)}\|(1+|\xi|)^s\sqrt{M}g\|_{L^{\infty}(\bR^3)}
$$
for all $f,g\in L^{\infty,s}(\bR^3)$.
\end{Prop}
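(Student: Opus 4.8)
The plan is to reproduce Grad's classical estimate, treating the gain and loss parts of $\cQ$ separately. Since $\cQ(f,g)=M^{-1}\cB(Mf,Mg)$ with $\cB$ the symmetric bilinear collision operator, one has $\cQ(f,g)=\cQ^+(f,g)-\cQ^-(f,g)$, where the loss part is
\[
\cQ^-(f,g)(\xi)=\tfrac12 f(\xi)\,\Lambda[g](\xi)+\tfrac12 g(\xi)\,\Lambda[f](\xi),\qquad \Lambda[h](\xi):=\iint_{\bR^3\times\bS^2}M(\xi_*)h(\xi_*)\,|(\xi-\xi_*)\cdot\om|\,d\om\,d\xi_*,
\]
and the gain part $\cQ^+$ is the analogous expression involving the values of $f$ and $g$ at the post-collisional velocities $\xi',\xi'_*$. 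I would bound each part by elementary pointwise estimates.

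For the loss part, $|(\xi-\xi_*)\cdot\om|\le|\xi|+|\xi_*|$ and $\int_{\bS^2}d\om<\infty$ give $|\Lambda[h](\xi)|\le C\int_{\bR^3}M(\xi_*)|h(\xi_*)|(|\xi|+|\xi_*|)\,d\xi_*$. To control $(1+|\xi|)^{s-1}\sqrt{M(\xi)}\,\cQ^-(f,g)(\xi)$ I would write $(1+|\xi|)^{s-1}\sqrt{M(\xi)}\,|f(\xi)|\le(1+|\xi|)^{-1}\|(1+|\xi|)^s\sqrt M f\|_{L^\infty}$, then use $(1+|\xi|)^{-1}(|\xi|+|\xi_*|)\le1+|\xi_*|$ together with $M(\xi_*)|g(\xi_*)|(1+|\xi_*|)\le(1+|\xi_*|)^{1-s}\sqrt{M(\xi_*)}\,\|(1+|\xi|)^s\sqrt M g\|_{L^\infty}$; since $\int_{\bR^3}(1+|\xi_*|)^{1-s}\sqrt{M(\xi_*)}\,d\xi_*<\infty$ (a finite Gaussian moment, for any $s\ge0$, in particular for $s\ge1$), this yields the claimed bound for $\cQ^-$, together with a symmetric term under $f\leftrightarrow g$.

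The gain part is where the only substantial work lies, though it is classical. I would use the elastic-collision identities $|\xi'|^2+|\xi'_*|^2=|\xi|^2+|\xi_*|^2$ and $|\xi'-\xi'_*|=|\xi-\xi_*|$. The first gives $M(\xi')M(\xi'_*)=M(\xi)M(\xi_*)$, so that, after dividing the gain integral by $M(\xi)$,
\[
\cQ^+(f,g)(\xi)=\tfrac12\iint_{\bR^3\times\bS^2}M(\xi_*)\bigl(f(\xi')g(\xi'_*)+g(\xi')f(\xi'_*)\bigr)\,|(\xi-\xi_*)\cdot\om|\,d\om\,d\xi_*.
\]
Estimating $|f(\xi')|\le(1+|\xi'|)^{-s}M(\xi')^{-1/2}\|(1+|\xi|)^s\sqrt M f\|_{L^\infty}$ (and similarly for $g(\xi'_*)$), and using the identity $M(\xi_*)/\sqrt{M(\xi')M(\xi'_*)}=\sqrt{M(\xi_*)}/\sqrt{M(\xi)}$, the weight $\sqrt{M(\xi)}$ coming from the left-hand side of the claimed inequality is cancelled exactly and only $\sqrt{M(\xi_*)}$ remains. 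It then suffices to bound, uniformly in $\xi$, the quantity
\[
(1+|\xi|)^{s-1}\iint_{\bR^3\times\bS^2}\frac{\sqrt{M(\xi_*)}\,|(\xi-\xi_*)\cdot\om|}{(1+|\xi'|)^s\,(1+|\xi'_*|)^s}\,d\om\,d\xi_*.
\]
For this I would invoke the elementary inequality
\[
(1+|\xi'|)(1+|\xi'_*|)\ge1+|\xi'|+|\xi'_*|\ge1+\sqrt{|\xi'|^2+|\xi'_*|^2}=1+\sqrt{|\xi|^2+|\xi_*|^2}\ge1+|\xi|
\]
(the second inequality because the cross term is nonnegative), valid for $s\ge0$, which yields $(1+|\xi'|)^{-s}(1+|\xi'_*|)^{-s}\le(1+|\xi|)^{-s}$; integrating out $\om$ and bounding $|(\xi-\xi_*)\cdot\om|\le|\xi|+|\xi_*|$, the displayed quantity is then at most a constant times $(1+|\xi|)^{-1}\int_{\bR^3}\sqrt{M(\xi_*)}(|\xi|+|\xi_*|)\,d\xi_*$, which is bounded by a constant depending only on $\int_{\bR^3}\sqrt M\,d\xi_*$ and $\int_{\bR^3}|\xi_*|\sqrt M\,d\xi_*$. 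Adding the estimates of $\cQ^+$ and $\cQ^-$ proves the proposition.

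I expect the only mildly delicate point to be the bookkeeping of the Gaussian weights in the gain term: one has to insert $M(\xi')M(\xi'_*)=M(\xi)M(\xi_*)$ so that the factor $\sqrt{M(\xi)}$ on the left is absorbed, and then combine this with the weight inequality $(1+|\xi'|)(1+|\xi'_*|)\ge1+|\xi|$. Both are standard; in particular, and in contrast with the compactness part of Grad's lemma on $\cK$, this $L^\infty$-weighted bilinear bound requires no pre-/post-collisional change of variables, which is what keeps the argument elementary.
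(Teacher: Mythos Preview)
Your argument is correct and is precisely the classical Grad estimate. The paper does not give its own proof of this proposition: it simply attributes the inequality to Grad and refers to Lemma~7.2.6 in \cite{CerIllPul}. Your writeup reproduces that standard argument --- splitting into gain and loss, using $M(\xi')M(\xi'_*)=M(\xi)M(\xi_*)$ to cancel the Gaussian weight in the gain term, and the elementary bound $(1+|\xi'|)(1+|\xi'_*|)\ge 1+|\xi|$ to close the polynomial weights --- so there is nothing to compare beyond noting that you have supplied what the paper merely cites.
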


This inequality is due to Grad: see Lemma 7.2.6 in \cite{CerIllPul} for a proof.

\begin{proof}[Proof of Proposition \ref{P-ExUnNLPen}]
Set
$$
\ba
\cX:=\{(g,h)\text{ s.t. }(1+|\xi|)^3\sqrt{M}g\in L^\infty(\bR_+\times\bR^3)\,,\,\,h\in L^\infty(\bR_+)&
\\
\text{ and }g(x,\cR\xi)=g(x,\xi)\text{ for a.e. }(x,\xi)\in\bR_+\times\bR^3\}&\,,
\ea
$$
which is a Banach space for the norm
$$
\|(g,h)\|_\cX:=\|(1+|\xi|)^3\sqrt{M}g\|_{L^\infty(\bR_+\times\bR^3)}+\|h\|_{L^\infty(\bR_+)}\,.
$$

Given $(g_{u,\g},h_{u,\g})\in\cX$, solve for $(\tilde g_{u,\g},\tilde h_{u,\g})$ the half-space problem 
$$
\left\{
\ba
{}&(\xi_1+u)\d_x\tilde g_{u,\g}+\cL^p\tilde g_{u,\g}=e^{-\g x}(I-P_u)\cQ(g_{u,\g}-h_{u,\g}\phi_u,g_{u,\g}-h_{u,\g}\phi_u)\,,
\\
&\tilde h_{u,\g}(x)=-e^{-\g x}\int_0^\infty e^{(\tau_u-2\g)z}\la\psi_u\cQ(g_{u,\g}-h_{u,\g}\phi_u,g_{u,\g}-h_{u,\g}\phi_u)\ra(x+z)dz\,,
\\
&\tilde g_{u,\g}(0,\xi)=f_b(\xi)+\tilde h_{u,\g}(0)\phi_u(\xi)\,,\quad\xi_1+u>0\,,
\ea
\right.
$$
and call 
$$
\cS_{u,\g}:\,\cX\ni(g_{u,\g},h_{u,\g})\mapsto(\tilde g_{u,\g},\tilde h_{u,\g})\in\cX
$$
the solution map so defined.

Applying Proposition \ref{P-BoundQ} shows that
$$
\ba
\|(1+|\xi|)^2\sqrt{M}\cQ(g_{u,\g}-h_{u,\g}\phi_u,g_{u,\g}-h_{u,\g}\phi_u)\|_{L^{\infty}(\bR^3)}
\\
\le Q_3(\|(1+|\xi|)^3\sqrt{M}g_{u,\g}\|_{L^\infty(\bR_+\times\bR^3)}+\|h_{u,\g}\|_{L^\infty(\bR_+)}\|(1+|\xi|)^3\sqrt{M}\phi_u\|_{L^\infty(\bR^3)})^2
\\
\le Q_3(\|(1+|\xi|)^3\sqrt{M}g_{u,\g}\|_{L^\infty(\bR_+\times\bR^3)}+C_3\|h_{u,\g}\|_{L^\infty(\bR_+)})^2
\\
\le Q_3\max(1,C_3^2)\|(g_{u,\g},h_{u,\g})\|^2_\cX&\,.
\ea
$$
Hence
\be\lb{Bound-h}
\ba
\|\tilde h_{u,\g}\|_{L^\infty(\bR_+)}\le\tfrac{\cI_{3,4}^{1/2}\|\psi_u\|_{\fH}}{2\g-\tau_u}\|(1+|\xi|)^2\sqrt{M}\cQ(g_{u,\g}-h_{u,\g}\phi_u,g_{u,\g}-h_{u,\g}\phi_u)\|_{L^{\infty}(\bR^3)}
\\
\le\tfrac{\cI_{3,4}^{1/2}\|\psi_u\|_{\fH}}{2\g-\tau_u}Q_3\max(1,C_3^2)\|(g_{u,\g},h_{u,\g})\|^2_\cX&\,.
\ea
\ee

On the other hand, we apply Proposition \ref{P-L8LinPen} with $\de=\g$ and
$$
g_b:=f_b+\tilde h_{u,\g}(0)\phi_u\,,\qquad Q:=e^{-2\g x}\cQ(g_\g-h_\g\phi_u,g_\g-h_\g\phi_u)\,.
$$
Then
$$
\ba
\|(1+|\xi|)^3\sqrt{M}g_b\|_{L^\infty(\bR^3)}\le&\|(1+|\xi|)^3\sqrt{M}f_b\|_{L^\infty(\bR^3)}
\\
&+\tfrac{\cI_{3,4}^{1/2}\|\psi_u\|_{\fH}}{2\g-\tau_u}Q_3\max(1,C_3^2)C_3\|(g_{u,\g},h_{u,\g})\|^2_\cX\,,
\ea
$$
while
$$
\ba
\|e^{(\g+\de)x}(1+|\xi|)^2\sqrt{M}Q\|_{L^\infty(\bR_+\times\bR^3)}
\\
=\|(1+|\xi|)^2\sqrt{M}\cQ(g_{u,\g}-h_{u,\g}\phi_u,g_{u,\g}-h_{u,\g}\phi_u)\|_{L^{\infty}(\bR_+\times\bR^3)}
\\
\le Q_3\max(1,C_3^2)\|(g_{u,\g},h_{u,\g})\|^2_\cX&\,.
\ea
$$
The bound on the solution to the penalized linearized problem in Proposition \ref{P-L8LinPen}, together with the estimate \eqref{Bound-h} for $\|h_{u,\g}\|_{L^\infty(\bR_+)}$, implies that
$$
\ba
\|(\tilde g_{u,\g},\tilde h_{u,\g})\|_\cX\le&L\|(1+|\xi|)^3\sqrt{M}f_b\|_{L^\infty(\bR^3)}
\\
&+ L\tfrac{\cI_{3,4}^{1/2}\|\psi_u\|_{\fH}}{2\g-\tau_u}Q_3\max(1,C_3^2)C_3\|(g_{u,\g},h_{u,\g})\|^2_\cX
\\
&+LQ_3\max(1,C_3^2)\|(g_{u,\g},h_{u,\g})\|^2_\cX
\\
&+\tfrac{\cI_{3,4}^{1/2}\|\psi_u\|_{\fH}}{2\g-\tau_u}Q_3\max(1,C_3^2)\|(g_{u,\g},h_{u,\g})\|^2_\cX\,,
\ea
$$
which we put in the form
$$
\|(\tilde g_{u,\g},\tilde h_{u,\g})\|_\cX\le L\|(1+|\xi|)^3\sqrt{M}f_b\|_{L^\infty(\bR^3)}+\Lambda\|(g_{u,\g},h_{u,\g})\|^2_\cX
$$
with
\be\lb{DefLa}
\Lambda:=Q_3\max(1,C_3^2)\left(\tfrac{\cI_{3,4}^{1/2}\|\psi_u\|_{\fH}}{2\g-\tau_u}(1+LC_3)+L\right)\,.
\ee

Pick $\eps>0$ small enough so that
\be\lb{DefEps}
0<\eps<1/4\Lambda L
\ee
and assume that
$$
\|(1+|\xi|)^3\sqrt{M}f_b\|_{L^\infty(\bR^3)}<\eps\,.
$$
If $\|(g_{u,\g},h_{u,\g})\|_\cX\le 2L\eps$, one has
$$
\|(\tilde g_{u,\g},\tilde h_{u,\g})\|_\cX\le L\eps+\Lambda(2L\eps)^2=L\eps+4\Lambda L^2\eps^2\le 2L\eps\,,
$$
so that the solution map $\cS_{u,\g}$ satisfies
$$
\cS_{u,\g}(\overline{B_\cX(0,2L\eps)})\subset\overline{B_\cX(0,2L\eps)}\,.
$$

Let  $(g_{u,\g},h_{u,\g})$ and $(g'_{u,\g},h'_{u,\g})\in\overline{B_\cX(0,2L\eps)}$. We seek to bound
$$
\|(1+|\xi|)^3\sqrt{M}(\cT(g_{u,\g},h_{u,\g})-\cT(g'_{u,\g},h'_{u,\g}))\|
$$
in terms of 
$$
\|(1+|\xi|)^3\sqrt{M}(g_{u,\g}-g'_{u,\g},h_{u,\g}-h'_{u,\g})\|_\cX\,.
$$
One has
$$
\left\{
\ba
{}&(\xi_1+u)\d_x(g_{u,\g}-g'_{u,\g})+\cL^p(g_{u,\g}-g'_{u,\g})=e^{-\g x}(I\!-\!P_u)\Sigma\,,
\\
&\Sigma=\cQ((g_{u,\g}\!-\!g'_{u,\g})\!-\!(h_{u,\g}\!-\!h'_{u,\g})\phi_u,(g_{u,\g}\!+\!g'_{u,\g})\!-\!(h_{u,\g}\!+\!h'_{u,\g})\phi_u)\,,
\\
&(\tilde h_{u,\g}-\tilde h'_{u,\g})(x)=-e^{-\g x}\int_0^\infty e^{(\tau_u-2\g)z}\la\psi_u\Sigma\ra(x+z)dz\,,
\\
&(\tilde g_{u,\g}-\tilde g'_{u,\g})(0,\xi)=(\tilde h_{u,\g}-\tilde h'_{u,\g})(0)\phi_u(\xi)\,,\quad\xi_1+u>0\,.
\ea
\right.
$$

First, we deduce from Proposition \ref{P-BoundQ} that
$$
\ba
\|(1+|\xi|)^2\sqrt{M}\Sigma\|_{L^{\infty}(\bR^3)}
\\
\le Q_3(\|(1+|\xi|)^3\sqrt{M}(g_{u,\g}-g'_{u,\g})\|_{L^\infty(\bR_+\times\bR^3)}+C_3\|h_{u,\g}-h'_{u,\g}\|_{L^\infty(\bR_+\times\bR^3)})
\\
\times(\|(1+|\xi|)^3\sqrt{M}(g_{u,\g}+g'_{u,\g})\|_{L^\infty(\bR_+\times\bR^3)}+C_3\|h_{u,\g}+h'_{u,\g}\|_{L^\infty(\bR_+)})
\\
\le Q_3\max(1,C_3^2)\|(g_{u,\g}-g'_{u,\g},h_{u,\g}-h'_{u,\g})\|_\cX\|(g_{u,\g}+g'_{u,\g},h_{u,\g}+h'_{u,\g})\|_\cX
\\
\le  4L\eps Q_3\max(1,C_3^2)\|(g_{u,\g}-g'_{u,\g},h_{u,\g}-h'_{u,\g})\|_\cX&\,.
\ea
$$

With this estimate, we bound $\tilde h_{u,\g}-\tilde h_{u,\g}$ as follows:
$$
\ba
\|\tilde h_{u,\g}-\tilde h_{u,\g}\|_{L^\infty(\bR_+)}\le\tfrac{\cI_{3,4}^{1/2}\|\psi_u\|_{\fH}}{2\g-\tau_u}\|(1+|\xi|)^2\sqrt{M}\Sigma\|_{L^{\infty}(\bR^3)}
\\
\le 4L\eps Q_3\max(1,C_3^2)\tfrac{\cI_{3,4}^{1/2}\|\psi_u\|_{\fH}}{2\g-\tau_u}\|(g_{u,\g}-g'_{u,\g},h_{u,\g}-h'_{u,\g})\|_\cX&\,.
\ea
$$

Finally, we apply Proposition \ref{P-L8LinPen} with $\de=\g$ and
$$
g_b:=(\tilde h_{u,\g}-\tilde h'_{u,\g})(0)\phi_u\,,\qquad Q:=e^{-2\g x}\Sigma\,.
$$
Thus
$$
\ba
\|(1+|\xi|)^3\sqrt{M}g_b\|_{L^\infty(\bR^3)}
\\
\le 4L\eps \tfrac{\cI_{3,4}^{1/2}\|\psi_u\|_{\fH}}{2\g-\tau_u}Q_3\max(1,C_3^2)C_3\|(g_{u,\g}-g'_{u,\g},h_{u,\g}-h'_{u,\g})\|_\cX&\,,
\ea
$$
while
$$
\ba
\|e^{(\g+\de)x}(1+|\xi|)^2\sqrt{M}Q\|_{L^\infty(\bR_+\times\bR^3)}=\|(1+|\xi|)^2\sqrt{M}\Sigma\|_{L^{\infty}(\bR_+\times\bR^3)}
\\
\le 4L\eps Q_3\max(1,C_3^2)\|(g_{u,\g}-g'_{u,\g},h_{u,\g}-h'_{u,\g})\|_\cX&\,.
\ea
$$
Hence, the bound in Proposition \ref{P-L8LinPen} implies that
$$
\ba
\|(\tilde g_{u,\g}-\tilde g'_{u,\g},\tilde h_{u,\g}-\tilde h'_{u,\g})\|_\cX
\\
\le4L^2\eps\tfrac{\cI_{3,4}^{1/2}\|\psi_u\|_{\fH}}{2\g-\tau_u}Q_3\max(1,C_3^2)C_3\|(g_{u,\g}-g'_{u,\g},h_{u,\g}-h'_{u,\g})\|_\cX
\\
+4L^2\eps Q_3\max(1,C_3^2)\|(g_{u,\g}-g'_{u,\g},h_{u,\g}-h'_{u,\g})\|_\cX
\\
+4L\eps\tfrac{\cI_{3,4}^{1/2}\|\psi_u\|_{\fH}}{2\g-\tau_u}Q_3\max(1,C_3^2)\|(g_{u,\g}-g'_{u,\g},h_{u,\g}-h'_{u,\g})\|_\cX
\\
\le 4L\eps Q_3\max(1,C_3^2)\left(L+\tfrac{\cI_{3,4}^{1/2}\|\psi_u\|_{\fH}}{2\g-\tau_u}(1+LC_3)\right)\|(g_{u,\g}-g'_{u,\g},h_{u,\g}-h'_{u,\g})\|_\cX
\\
= 4L\eps \Lambda\|(g_{u,\g}-g'_{u,\g},h_{u,\g}-h'_{u,\g})\|_\cX&\,.
\ea
$$

The inequality above implies that the solution map $\cS_{u,\g}$ satisfies
$$
\|\cS_{u,\g}(\tilde g_{u,\g},\tilde h_{u,\g})-\cS_{u,\g}(\tilde g'_{u,\g},\tilde h'_{u,\g})\|_\cX\le 4L\eps \Lambda\|(g_{u,\g}-g'_{u,\g},h_{u,\g}-h'_{u,\g})\|_\cX
$$
for all $(g_{u,\g},h_{u,\g})$ and $(g'_{u,\g},h'_{u,\g})\in\overline{B_\cX(0,2L\eps)}$. Since $0<\eps<1/4L\Lambda$, this implies that $\cS_{u,\g}$ is a strict contraction on $\overline{B_\cX(0,2L\eps)}$, which is a complete metric space 
(as a closed subset of the Banach space $\cX$). By the fixed point theorem, we conclude that there exists a unique $(g_{u,\g},h_{u,\g})\in\overline{B_\cX(0,2L\eps)}$ such that
$$
\cS_{u,\g}(g_{u,\g},h_{u,\g})=(g_{u,\g},h_{u,\g})\,.
$$
In other words, there exists a unique $(g_{u,\g},h_{u,\g})$ which is a solution of the problem \eqref{HalfSpNonlinPen} in $\overline{B_\cX(0,2L\eps)}$.
\end{proof}

\subsection{Removing the Penalization}

Let  $f_b\equiv f_b(\xi)$ satisfy
$$
f_b\circ\cR=f_b\quad\text{ and }\,\,\|(1+|\xi|)^3\sqrt{M}f_b\|_{L^\infty(\bR^3)}\le\eps
$$
(with $\cR$ as in \eqref{DefR}), and let $(g_{u,\g},h_{u,\g})$ be the unique solution to \eqref{HalfSpNonlinPen} given by Proposition \ref{P-ExUnNLPen}. Define
\be\lb{DefRug}
\fR_{u,\g}[f_b](\xi):=g_{u,\g}(0,\xi)\,,\quad\xi\in\bR^3\,.
\ee
By Lemma \ref{L-PenCond},
$$
\ba
\la(\xi_1+u)Y_1[u]\fR_{u,\g}[f_b]\ra=\la(\xi_1+u)Y_2[u]\fR_{u,\g}[f_b]\ra=0
\\
\iff\la(\xi_1+u)X_+g_{u,\g}\ra=\la(\xi_1+u)\psi_ug_{u,\g}\ra=0
\\
\implies\cL^pg_{u,\g}=\cL g_{u,\g}-\g(\xi_1+u)g_{u,\g}&\,.
\ea
$$

In that case, denoting
$$
g_u(x,\xi):=e^{-\g x}g_{u,\g}(x,\xi)\,,\quad h_u(x):=e^{-\g x}h_{u,\g}(x)\,,
$$
and
$$
f_u(x,\xi):=g_u(x,\xi)-h_u(x)\phi_u(\xi)\,,
$$
we see that 
$$
(\xi_1+u)\d_xg_u+\cL g_u=(I-\bP_u)\cQ(f_u,f_u)\quad \text{and }\,\, g_u=(I-\bp_u)f_u\,,
$$
while
$$
h_u(x)=-\int_0^\infty e^{\tau_uz}\la\psi_u\cQ(f_u,f_u)\ra(x+z)dz\quad\hbox{ and }h_u(x)\phi_u(\xi)=\bp_uf_u(x,\xi)\,.
$$
In other words, $f_u$ satisfies \eqref{NLHalfSpPbm} together with the bound \eqref{ExpDecUnif} with
\be\lb{DefE}
E:=2L\eps\max(1,C_3)\,.
\ee

This estimate holds for all $u$ satisfying $|u|\le R$ where $R$ is defined in \eqref{Cond1}, and all $\g\in(0,\min(\Gamma,\tfrac12\nu_-))$, where $\Gamma$ is defined in \eqref{DefGa}. The constants $L$ and $\eps$ are defined 
in \eqref{DefL} and \eqref{DefEps} respectively.

Conversely, if $f_u$ is a solution to the nonlinear half-space problem satisfying the uniform exponential decay condition \eqref{ExpDecUnif} for all $u$ satisfying $|u|\le R$ and all $\g\in(0,\min(\Gamma,\tfrac12\nu_-))$, we deduce from
Lemma \ref{L-PtesHalfSp-f} that $g_u:=(I-\bp_u)f_u$ and $h_u:=\la(\xi_1+u)\psi_uf_u\ra$ satisfy \eqref{HalfSp-g} and \eqref{h=} respectively, with $Q=\cQ(f,f)$, while $(g_{u,\g},h_{u,\g})$ defined by the formulas
\be\lb{FormPenSol}
\ba
g_{u,\g}(x,\xi)&=e^{\g x}g_u(x,\xi)=e^{\g x}(I-\bp_u)f_u(x,\xi)
\\
h_{u,\g}&=e^{\g x}h_u(x)=e^{\g x}\la(\xi_1+u)\psi_uf_u\ra(x)
\ea
\ee
must satisfy the penalized nonlinear half-space problem \eqref{HalfSpNonlinPen}. And since $(g_{u,\g},h_{u,\g})$ is a solution of \eqref{HalfSpNonlinPen} of the form \eqref{FormPenSol} with $f_u(x,\xi)\to 0$ in $\fH$ as $x\to\infty$,
one has
$$
\la(\xi_1+u)X_+g_{u,\g}\ra(x)=e^{\g x}\la(\xi_1+u)X_+g_u\ra(x)=0
$$
(because $\la(\xi_1+u)X_+g_u\ra$ is constant and $g_u(x,\cdot)\to 0$ in $\fH$ as $x\to+\infty$), and
$$
\la(\xi_1+u)\psi_ug_{u,\g}\ra(x)=e^{\g x}\la(\xi_1+u)\psi_ug_u\ra(x)=0
$$
(because $g_{u,\g}(x,\xi)=e^{\g x}(I-\bp_u)f_u(x,\xi)$). Applying Lemma \ref{L-PenCond} shows that $g_{u,\g}$ must therefore satisfy the conditions
$$
\la(\xi_1+u)Y_1[u]g_{u,\g}\ra(0)=\la(\xi_1+u)Y_2[u]g_{u,\g}\ra(0)=0\,,
$$
or in other words, 
$$
\la(\xi_1+u)Y_1[u]\fR_{u,\g}[f_b]\ra=\la(\xi_1+u)Y_2[u]\fR_{u,\g}[f_b]\ra=0\,.
$$

\bigskip
\noindent
\textbf{Acknowledgements.} This paper has been a long standing project, and we are grateful to several colleagues, whose works have been a source of inspiration and encouragement to us. Profs. Y. Sone and K. Aoki patiently explained to us the very intricate details 
of rarefied gas flows with evaporation and condensation in a half-space on the basis of their pioneering numerical work on the subject. The corresponding theoretical study in \cite{LiuYu} was presented in a course given by Prof. T.-P. Liu at Academia Sinica in Taipei in 
2009. The second author learned from the penalization method used here from a talk by the late Prof. S. Ukai in Stanford in 2001, and was introduced to the generalized eigenvalue problem by the late Prof. B. Nicolaenko in Phoenix in 1999. Niclas Bernhoff's visits to 
Paris, where most of this collaboration took place, were supported by the French Institute in Sweden (FR\"O program) in September 2016, and by SVeFUM in October 2017 and May 2019. Fran\c cois Golse also acknowledges Prof. S. Takata's hospitality in 2018 in 
Kyoto University, where this project was finalized.



\end{document}